\documentclass{amsart} \usepackage{color} \usepackage{amssymb}
\usepackage{xypic}
\newcommand{\acom}[1]{} \newcommand{\coker}{\operatorname{coker}}
 \newcommand{\DR}{\operatorname{DR}}
\newcommand{\Dst}{\operatorname{D}_{\textup{st}}}
\newcommand{\red}{\operatorname{red}} \newcommand{\Xgrph}{\Gamma(X)}
\newcommand{\PP}{\mathbb{P}} \newcommand{\fp}{\textup{fp}}

\newcommand{\Cone}{\operatorname{Cone}}
\newcommand{\ext}{\operatorname{Ext}} \newcommand{\nt}{\tilde{N}}
\newcommand{\ntl}{\nt_l} \newcommand{\ms}{\operatorname{ms}}
 \newcommand{\Gr}{\operatorname{Gr}}
\newcommand{\val}{\operatorname{val}}
\newcommand{\res}{\operatorname{res}} \newcommand{\Zl}{\Z_l}
\newcommand{\Ql}{\Q_l}
\newcommand{\et}{\text{\'et}} \newcommand{\het}{H_{\et}}
\newcommand{\hst}{H_{\textup{st}}}
\newcommand{\cst}{C_{\textup{st}}^\bullet}
\newcommand{\cstp}{C_{\textup{st}}^{\bullet\prime}}
\newcommand{\csti}{C_{\textup{st}}^{\bullet i}}
\newcommand{\csto}{C_{\textup{st}}^{\bullet 0}}
\def\hT(#1,#2,#3){H_{\mathcal{T}}^{#2}(#1,\Z(#3))}
\def\hmot(#1,#2,#3){H_{\mathcal{M}}^{#2}(#1,\Z(#3))}
\def\hf(#1,#2,#3,#4){H_{\fp,#1}^{#2}(#3,#4)}
\def\hft(#1,#2,#3,#4){\tilde{H}_{\fp,#1}^{#2}(#3,#4)}
\def\hfp(#1,#2,#3){H_{\fp}^{#1}(#2,#3)}
\def\hsyn(#1,#2,#3){H_{\syn}^{#1}(#2,#3)}
\def\hmsyn(#1,#2,#3){\tilde{H}_{\ms}^{#1}(#2,#3)}
\def\hfpt(#1,#2,#3){\tilde{H}_{\fp}^{#1}(#2,#3)}
\def\hfP(#1,#2,#3,#4){H_{\textup{f},#1}^{#2}(#3,#4)}
\def\hfPt(#1,#2,#3,#4){\tilde{H}_{\textup{f},#1}^{#2}(#3,#4)}
\def\bdr(#1,#2){\frac{#1_g #2_h}{1-\alpha #1_g #2_h}}
 
\newcommand{\reg}{\operatorname{reg}} \newcommand{\regt}{\reg_t}
\newcommand{\regtl}{\reg_t^l} 
\newcommand{\Q}{\mathbb{Q}} \newcommand{\XX}{\mathcal{X}}

\renewcommand{\O}{\mathcal{O}} \newcommand{\dlog}{d\log}
\newcommand{\Z}{\mathbb{Z}} \newcommand{\Qp}{\Q_p}
\newcommand{\Zp}{\Z_p} \newcommand{\Cp}{\mathbb{C}_p}
 \newcommand{\Kbar}{\bar{K}}
\newcommand{\pair}[1]{{\left\langle #1 \right\rangle}}
\newcommand{\dr}{\textup{dR}} \newcommand{\hdr}{H_{\dr}}

 \newcommand{\syn}{\textup{syn}}
 
\newcommand{\isom}{\cong}
\newlabel{convention1}{{4.6}{12}}
\newlabel{vcomp1}{{3.7}{11}} \newlabel{mainthm1}{{1.2}{2}}
\newlabel{sec:globsemi}{{4}{11}}
\newcommand{\tors}{/\textup{tor}}
\newcommand{\Ymm}[1]{\bar{Y}^{(#1)}}
\newcommand{\Ym}{\Ymm{m}}
\newcommand{\cijk}{C_j^{i,k}}
\newcommand{\cij}{C_j^{i}}
\newcommand{\Cijk}{CH^{i+j-k}(\Ymm{2k-i+1})}
\newcommand{\ct}[3]{C_{#1}^{#2,#3}}
\newcommand{\ctt}[2]{C_{#1}^{#2}}

\newcommand{\Cy}[2]{CH^{#1}(\Ymm{#2})}
 \newtheorem{conjecture}{Conjecture}
\newtheorem{theorem}{Theorem}[section]

\newtheorem{proposition}[theorem]{Proposition}
 %%Delete [thm] to re-start numbering
\newtheorem{lemma}[theorem]{Lemma}
\newtheorem{corollary}[theorem]{Corollary} \theoremstyle{definition}
\newtheorem{definition}[theorem]{Definition}
\newtheorem{remark}[theorem]{Remark}

\newtheorem{assumption}[theorem]{Assumption}
\numberwithin{equation}{section}
\begin{document}
\title[Toric regulators]{Toric regulators} \author{Amnon 
Besser}
\address{
  Department of Mathematics\\
  Ben-Gurion University of the Negev\\
  P.O.B. 653\\
  Be'er-Sheva 84105\\
  Israel } \email{bessera@math.bgu.ac.il}

\author{Wayne Raskind} \address{
  Department of Mathematics\\
  Wayne State University\\
  Detroit, MI 48202\\
  U.S.A.  } \email{raskind@wayne.edu}
\maketitle{}

\section*{Introduction}
\label{sec:intro}
In the mid-19th century, Dirichlet (for quadratic fields) and then Dedekind defined a regulator map relating the units in the ring of integers of an algebaic number field of finite degree over $\Q$ with $r_1$ real embeddings and $2r_2$ complex embeddings to a lattice of codimension one in a Euclidean space of dimension $r_1+r_2$.  They then showed how a determinant formed from this map and other invariants of the field are related to values of zeta and $L$-functions, known as Dirichlet's class number formula (\cite[Supplemente, V, $\S\S$ 183 and 184]{DirDed94}).  Since then, the term ``regulator'' has been applied to many such maps in number theory and algebraic geometry such as higher algebraic $K$-theory of number fields, Abel-Jacobi maps for algebraic cycles, and more generally, motivic cohomology.   In most cases, the source of the regulator map is a group of interest that is deemed to be difficult to compute, and the target somewhat easier to compute.  A very general form of this circle of ideas can be found in Beilinson's conjectures relating motivic cohomology of a smooth projective variety over a number field to real Deligne cohomology \cite{Bei84} and values of $L$-functions, and their refinement by Bloch-Kato \cite{Blo-Kat90}.  There are $p$-adic analogues of these conjectures, where real Deligne cohomology is replaced by a suitable $p$-adic cohomology theory such as syntomic or log-syntomic cohomology, and a conjectural relationship with values of $p$-adic $L$-functions.  In the special but important case of a variety with totally degenerate reduction over a $p$-adic field $K$ (please see below for definitions), this paper seeks to tie many of the $p$-adic conjectures and some of the known results together under the guise of what we call \emph{toric regulators}, which relate motivic cohomology with $p$-adic tori (quotient of a multiplicative torus by a finitely generated free abelian group).  These tori may be compact or not.

In~\cite{Ras-Xar07,Ras-Xar07a}, the second named author and
Xarles studied a class of varieties $X$ over a local field $K$ with
what they termed \emph{totally degenerate
reduction}. In~\cite{Ras-Xar07} they studied the \'etale cohomology
of $X$ with $\Z_l$-coefficients and showed that for \emph{all} $l$ these are, up
to finite torsion and cotorsion, extensions of direct sums of Tate
twists. In~\cite{Ras-Xar07a} they used this result to define \emph{$p$-adic
intermediate Jacobians}, which are $p$-adically uniformized tori,
together with Abel-Jacobi maps from the Chow group of algebraic cycles that are homologically equivalent to zero.  The first example of these is the Tate elliptic curve
$E_q$, which is given rigid analytically by $\mathbb{G}_m/q^{\Z}$ with $q$ of absolute value less than 1 in $K$. In
this case, the intermediate Jacobian is just $K^\times/ q^{\Z}$, and
the Abel-Jacobi map is essentially the identity. More generally, for a
$p$-adically uniformized curve $X$, their work recovers the $p$-adic
uniformization of the Jacobian in a purely algebro-geometric way.  The Abel-Jacobi map defined in loc. cit. should agree with that provided by
Manin-Drinfeld~\cite{ManDri73}, although they do not prove that in their paper.  

The work~\cite{Ras-Xar07,Ras-Xar07a} raises the very natural question
of defining toric regulators in higher motivic cohomology, and that is the main purpose of this paper. We will define
\emph{higher intermediate Jacobians} $ \hT(X,k+1,r)$, given by the quotient
of an algebraic torus by periods, and construct, assuming a certain natural conjecture,
regulator maps into them, \emph{toric regulators}.

The toric regulator is a refinement of the regulator of
Sreekantan~\cite{Sre10}, which is a map
\begin{equation*}
  r_{\mathcal{D}}: \hmot(X,k+1,r) \to H_{\mathcal{D}}^{k+1}(X,\mathbb{Q}(r))
\end{equation*}
where the group on the right is the cohomology of a certain cone
defined by Consani~\cite{Con98}.  It is a finite dimensional $\Q$-vector space.  We construct a valuation map
(see~\eqref{eq:valmap})
\begin{equation*}
  \hT(X,k+1,r) \to  H_{\mathcal{D}}^{k+1}(X,\Z(r))
\end{equation*}
and
conjecture that after tensoring with $\Q$, the Sreekantan regulator is the valuation of the toric
regulator.

Another interesting feature of the theory of the toric regulator is
the relation with the syntomic regulator. Following the case of $CH^1$
of curves, where the toric regulator is the Abel-Jacobi map and the
syntomic regulator is its logarithm, one expects that ``the
syntomic regulator is the logarithm of the toric regulator.'' We
formulate this assertion precisely and prove it.  From the point of
view of the syntomic theory, this adds the interesting assertion that
the syntomic regulator can be ``exponentiated.'' This sometimes allows
us to guess formulas for the toric regulator, and we will describe one
such guess, but without presenting the syntomic motivation, for
brevity.

The Tate elliptic curve is in some sense the original toric regulator.  The paper ~\cite{Ras-Xar07,Ras-Xar07a} may be viewed as a purely algebro-geometric way using $p$-adic Hodge theory to interpret and generalize Tate's analytic theory and its generalization to curves of higher genus and abelian varieties by Mumford~\cite{Mum72a,Mum72b}. 
Another example is
provided by $K_2$ of a curve $X$ with totally degenerate reduction,
i.e., a Mumford curve. In this case, it turns out a regulator into an
algebraic torus has already been developed, and termed the \emph{rigid
analytic regulator} by P\'al~\cite{Pal10}
(we prove this except at the prime $p$).  We also compare the log of the rigid analytic regulator with the
syntomic regulator, as computed in~\cite{Bes18}. For the product of
two Mumford curves, we explain a conjectural formula for the toric
regulator, whose motivation is syntomic.

A question left for future work is the relation between the toric
regulator and $L$-functions. Because the syntomic regulator is the
logarithm of the toric regulator and is related to special values of
$p$-adic $L$-functions, we are looking for such special values that may
be ``exponentiated.'' There have been several instances of such a
phenomenon, starting with the refined Birch and Swinnerton-Dyer
conjecture of Mazur and Tate~\cite{Maz-Tat87} and its descendents,
especially in Darmon's work on Stark-Heegner
points~\cite{BerDar94,Ber-Dar96,Ber-Dar98,Dar98,Dar01}. These
conjectures concern rational points on elliptic curves in terms of the
Tate parameterization at a prime of split multiplicative reduction, and
so fit perfectly with the toric regulator in this case. These
conjectures inspired in turn the refined $p$-adic Stark conjecture of
Gross~\cite{Gross88}. There is one example in higher
K-theory, which is due to P{\'a}l~\cite{Pal10b} and Kondo and
Yasuda~\cite{KonYas11}, providing an $L$-function and a regulator
formula in the case of $K_2$ of the the Drinfeld modular curve, in
analogy with Beilinson's work in the classical case~\cite{Bei84}
and~\cite{Ber-Dar12,Bru10,Nik10} in the $p$-adic case on $K_2$ of a
modular curve. Note that to relate this with
the toric regulator, one has to either import the result to the number
field case or extend the toric regulator to the function field case
(the theory of~\cite{Ras-Xar07} at the prime $p$ currently relies on
$p$-adic Hodge theory).

Another possible source of examples is the Sreekantan regulator. This
is expected to be connected with $L$-values~\cite{Sre10}, and the example
of $K_1$ of the product of two Drinfeld modular curves has been worked
out by Sreekantan~\cite{Sre10a}. Because of the relation between the
toric and Sreekantan regulators mentioned above, the sought after $L$-functions should be
such that their valuation is the corresponding $L$-function of
Sreekantan.

This work began while the first author was on sabbatical at Arizona State
University, continued while the second author
visited the first at Oxford University, and then during two visits of the first author to Wayne
State University.  It was
completed while the first author was on sabbatical at the Georgia Institute of
Technology and then a member of IH\'ES. We thank all of these institutions
and the Raymond and Beverly Sackler Foundation, whose
fellowship supported the stay at IH\'ES. The first author is currently
supported by a grant from the Israel Science Foundation number 912/18.

\section{The toric regulator}\label{sec:toric-regulator}

Let $K$ be a finite extension of $\Qp$ with ring of integers $R$,
uniformizer $\pi$ and residue field $F$. Let $G$ be the absolute
Galois group $G=\operatorname{Gal}(\bar{K}/K)$. Let $X$ be a
smooth, projective, geometrically connected variety over $K$ which has
totally degenerate reduction in the sense of~\cite{Ras-Xar07}.  Let
$k$ and $r$ be non-negative integers such that
\begin{equation}
  \label{eq:numerics}
  k-2r\le -1
\end{equation}
In this section we define the toric regulator
\begin{equation*}
  \hmot(X,k+1,r)_0 \xrightarrow{\regt} \hT(X,k+1,r)
\end{equation*}
from the motivic cohomology of $X$ to the toric higher intermediate
Jacobian of $X$. The subindex $0$ on motivic cohomology refers to
homologically trivial classes. This is only relevant when $k+1=2r$, in
which case $ \hmot(X,k+1,r) = CH^r(X)$ are Chow groups. In this
particular case the toric regulator is constructed in detail
in~\cite{Ras-Xar07a}. We will therefore mostly concentrate on the case
of strong inequality in~\eqref{eq:numerics}.

Let $l$ be a prime number.  As a consequence of the construction of
\'etale realization functors~\cite{Ivo07}, one gets an \'etale
regulator map (a rational coefficients version was known for a long
time but we will need integral coefficients),
\begin{equation}\label{regl}
  \reg_l: \hmot(X,k+1,r)_0 \to H^1(K,M_l(r))
  \text{ with } M_l=H_{\et}^{k}(X\otimes_K \bar{K}, \Z_l)
\end{equation}
(see for example~\cite{Rio06}).  Note that we have the
restriction~\eqref{eq:numerics} as otherwise the motivic cohomology
groups vanish.

As $X$ has totally degenerate reduction we have the following result
of Raskind and Xarles (see~\cite[Cor. 1 and Theorem 3]{Ras-Xar07} as
well as~\cite[Theorem 3]{Ras-Xar07a} summarizing both the $l\ne p$ and
$l=p$
cases).
\begin{theorem}
  There exist finitely generated abelian groups $T^i_j $ and, for each
  $l$, a filtration $W_\bullet$ on the $\Z_l$-module $M_l$ and
  isogenies
  \begin{equation*}
    \Gr_{i}^W M_l \tors \to
    \begin{cases}
      T_{\frac{k+i}{2}}^{-i}\tors \otimes \Zl({\frac{-k-i}{2}}) & k+i \text{ even}\\
      0 & \text{otherwise,}
    \end{cases}
  \end{equation*}
  which are isomorphisms for almost all $l$.
\end{theorem}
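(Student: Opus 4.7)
The plan is to exploit the Rapoport--Zink/Mokrane weight spectral sequence attached to a strictly semistable model. First I would choose a strictly semistable model $\mathcal{X}/R$ with special fiber $\bar{Y}=\bigcup_i Y_i$, and denote by $\Ymm{m}$ the disjoint union of all $m$-fold intersections of the components. For $l\ne p$ the Rapoport--Zink spectral sequence, and for $l=p$ its Hyodo--Kato/log-crystalline analogue (transported back to $M_l$ via Fontaine's $\Dst$ and Tsuji's semistable comparison), converge to $M_l=\het^k(X\otimes_K\Kbar,\Zl)$ with $E_1$-terms built from $\het^*(\Ymm{m}\otimes\bar F,\Zl(\cdot))$, and with the monodromy filtration as the abutment filtration $W_\bullet$.

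The totally degenerate hypothesis is precisely the statement that each $\het^*(\Ymm{m}\otimes\bar F,\Zl)$ is, after killing torsion, Tate with the even cohomology isogenous to $CH^{\bullet}(\Ymm{m})\otimes\Zl$ (via the cycle class map, with isomorphism for almost all $l$) and the odd degree part torsion. This is where the $l$-independent groups $T^i_j$ come from: they are defined as the appropriate subquotients of the complex of Chow groups $\Cijk$ with differentials induced by Gysin push-forward and restriction between strata of different codimensions. Because these differentials are algebraic correspondences, the combinatorial complex computing $T^i_j$ sits at the integral level before any tensoring with $\Zl$, and the cycle class map identifies the $E_2$-page of the weight spectral sequence, up to isogeny, with $T^i_j\otimes\Zl(-j)$.

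Next I would invoke degeneration of the weight spectral sequence at $E_2$ (rationally, by weight purity of the monodromy filtration; integrally the arguments give only an isogeny between $\Gr^W_{\bullet} M_l\tors$ and $T^{-i}_{(k+i)/2}\tors\otimes\Zl((-k-i)/2)$). Reindexing by $j=(k+i)/2$ produces the displayed formula, and parity forces the graded pieces to vanish when $k+i$ is odd because the relevant strata cohomology is supported in even degree. For almost all $l$, the finite set of primes appearing in cycle class denominators and in the kernels/cokernels of the spectral sequence differentials becomes invertible simultaneously, upgrading the isogenies to isomorphisms.

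The main obstacle is uniformity of the proof over all primes $l$ at once. For $l\ne p$ the picture is classical, but at $l=p$ the weight filtration only lives naturally on the $\Dst$ side, so one must combine the semistable comparison with enough integral control (via log-crystalline lattices) to recover an isogeny on $M_p$ itself rather than just a rational statement. A secondary but real difficulty is choosing a single integral model for $T^i_j$, built from Chow groups of the $\Ymm{m}$ together with their Gysin complex, so that the isogenies of the theorem are compatible for all $l$ simultaneously; this forces the entire bookkeeping with strata and Gysin maps to be carried out integrally before passing to $\Zl$-coefficients.
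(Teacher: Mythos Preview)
The paper does not actually give a proof of this theorem: it is quoted as a result of Raskind and Xarles, with a citation to \cite[Cor.~1 and Theorem~3]{Ras-Xar07} and \cite[Theorem~3]{Ras-Xar07a}, and then the construction of the groups $T^i_j$ from the Chow complex of the strata $\Ymm{m}$ is recalled immediately afterwards. So there is no in-paper argument to compare against.

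That said, your sketch is essentially the argument carried out in those references: the Rapoport--Zink (resp.\ Mokrane) weight spectral sequence with $E_1$-terms built from the cohomology of the $\Ymm{m}$, the totally degenerate hypothesis forcing these to be Tate and identified (up to finite kernel/cokernel) with Chow groups via cycle class maps, the definition of $T^i_j$ as cohomology of the integral Chow complex with Gysin and restriction differentials, and degeneration at $E_2$. One small remark on your wording: the $E_2$-degeneration is not a consequence of the weight--monodromy conjecture (purity of the monodromy filtration) but rather of Deligne's purity of Frobenius weights on the cohomology of the smooth proper strata $\Ymm{m}$, which forces all later differentials to vanish for weight reasons. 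The weight--monodromy statement is instead what lies behind the isogeny $N^i: T^{-i}_{j+i}\to T^i_j$ mentioned just after in the paper. Your identification of the $l=p$ case as the delicate point, requiring the semistable comparison to transport the filtration and the integral control to get an isogeny rather than a rational isomorphism, is accurate and is exactly why the $p$-adic case is handled separately in \cite{Ras-Xar07a}.
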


Let us recall the construction of the groups $T^{i}_{j}$~\cite[Section
3]{Ras-Xar07}. By assumption, $X$ is the generic fiber of a proper
$\O_K$-scheme $\mathcal{X}$ with strictly semi-stable reduction and special
fiber $Y$, which decomposes as a union $Y= \cup_{i=1}^n Y_i$. Set, for
each subset of indices $I\subset \{1,\ldots,n\}$,
\begin{equation*}
  Y_I = \bigcap_{i\in I} Y_i\;.
\end{equation*}
Let $\bar{Y}_I:= Y_I\otimes \bar{F}$ and let $\Ym$ be the disjoint
union of $\bar{Y}_I$ over all subset $I$ of size $m$.  We first define
groups $\cijk= \Cijk$ for each triple $(i,j,k)$ such that
$k\ge \max(0,i)$ and $0$ otherwise. Then we set
\begin{equation*}
  \cij = \bigoplus_k \cijk \;.
\end{equation*}
To make a complex out of these groups, we define the following maps:
For a subset of indices $I$ of size $m+1$ and an integer $0<r\le m+1$,
we define $I_r$ to be the subset obtained from $I$ by deleting the
$r$'th index. There is an obvious inclusion map
$\rho_r: Y_I \to Y_{I_r} $ and we define maps,
\begin{align*}
  \theta_{i,m} &= \sum_{r=1}^{m+1} (-1)^{r-1} \rho_r^\ast: CH^i(\Ym) \to CH^i(\Ymm{m+1})\;,\\
  \delta_{i,m} &= \sum_{r=1}^{m+1} (-1)^{r} \rho_{r\ast}:  CH^i(\Ymm{m+1}) \to CH^{i+1}(\Ym)\;,\\
  d' &= \bigoplus_{k\ge \max(0,i)} \theta_{i+j-k,2k-i+1}\;, \\
  d'' &= \bigoplus_{k\ge \max(0,i)} \theta_{i+j-k,2k-i}\;,
\end{align*}
and finally \[d_j^i = d'+d'': \cij \to C_j^{i+1}\;.\] Then we define
\begin{equation*}
  T_j^i := H^i(C_j^\bullet)\;.
\end{equation*}
The monodromy map
\begin{equation}\label{monodromy}
  N: T_j^i \to T_{j-1}^{i+2}
\end{equation}
is induced by the map $\cij \to C_{j-1}^{i+2}$ which is the identity
on the common factors. The composed map
\begin{equation}
  N^i: T_{j+i}^{-i} \to T_{j}^{i}
\end{equation}
is an isogeny for $i\ge 0$~\cite[Proposition 1]{Ras-Xar07}, implying
that $N$ is injective for negative $i$ and surjective for positive $i$
after tensoring with $\Q$.
\begin{remark}\label{inj}
  The numerical condition~\eqref{eq:numerics} on $k$ and $r$ imply
  that $T_{r}^{k-2r} \to T_{r-1}^{k+2-2r} $ is injective (after
  tensoring with $\Q$) while $ T_{r}^{k+1-2r} \to T_{r-1}^{k+3-2r} $
  is injective except when $k-2r=-1$ in~\eqref{eq:numerics},
  i.e., the case of cycles.
\end{remark}
There exists a pairing~\cite[p. 274]{Ras-Xar07}
\begin{equation}
  \label{eq:pairing}
  (~,~): T_j^i \times T_{d-j}^{-i} \to \Z
\end{equation}
coming from the intersection pairing and inducing a duality on the
torsion free quotients~\cite[Proposition 1]{Ras-Xar07}. We further
have the relation
\begin{equation}
  \label{eq:Nselfd}
  (Nx,y)=-(x,Ny)\;.
\end{equation}
Let us now make the following simplifying assumption.
\begin{assumption}
  The Galois group of the residue field $F$ acts trivially on all the
  groups $\cijk$.
\end{assumption}
This can always be achieved after a finite field extension. It is tedious but possible to keep track of the Galois action if this were not the case, and would be important if we considered e.g. towers of field extensions of $K$.  Let us
further use the following terminology.
\begin{definition}
  We say that a map defined for each prime $l$ is an \emph{almost injection}
  (respectively, \emph{almost surjection}) if its kernel (respectively,
  cokernel) is finite for all $l$ and $0$ for almost all $l$. We say
  it is an almost isomorphism if it is both an almost injection and an
  almost surjection.
\end{definition}

To proceed with the construction of the toric regulator, we will want,
as in~\cite{Ras-Xar07a}, to isolate from $M_l(r)$ the subquotient
which is an extension of $T_{\ast}^{\ast} \otimes \Z_{l}$ by
$T_{\ast}^{\ast} \otimes \Z_{l}(1)$, which, with our indexing, is the
the subquotient $W_{2r-k} M_l(r)/W_{2r-k-4} M_l(r) $.   In order to do this, we will use the groups $H^1_g$ of Bloch-Kato (\cite{Blo-Kat90}, \S 3). Recall that when $l\ne p$ $H_g^1 = H^1 $, while when $l=p$ we have for a $\Qp$-representation $V$ that $H_g^1(K,V) = \ker: H^1(K,V) \to H^1(K,V\otimes B_{\dr}) $ and for a $\Zp$-module $g$ cohomology classes are the ones that become $g$ after tensoring with $\Q$.  The regulator $\reg_l$ from \eqref{regl} takes values in  $H_g^1(K,M_l(r))$. This is tautological for $l\ne p$ and follows from the work of  Nekov{\'a}{\v{r}} and Nizio\l~\cite{Nek-Niz14} when $l=p$ (see Section~\ref{sec:syntomic} \eqref{regp} and the following statement and~\eqref{eq:stisg}).
\begin{proposition}\label{multn}
  There exists an integer $n_0$ and well defined maps
  \begin{equation*}
    \reg_l^\prime: \hmot(X,k+1,r)_0 \to H_g^1(K,W_{2r-k} M_l(r))
  \end{equation*}
  such that for any prime $l\ne p$ and any
  $\alpha \in \hmot(X,k+1,r) $ we have
  \begin{equation*}
    \reg_l^\prime (\alpha) = n_0 x, \text{ with } x\in H_g^1(K,W_{2r-k} M_l(r))\;,\; \iota_{2r-k}(x) = n_0 \reg_l(\alpha)\;,
  \end{equation*}
  with
  \begin{equation*}
    \iota_r: W_r M_l \to M_l
  \end{equation*}
  the obvious injection.
\end{proposition}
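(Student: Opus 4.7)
The plan is to study the short exact sequence of $G$-modules
\begin{equation*}
0 \to W_{2r-k}M_l(r) \xrightarrow{\iota_{2r-k}} M_l(r) \to Q_l \to 0, \qquad Q_l := M_l(r)/W_{2r-k}M_l(r),
\end{equation*}
and to use the associated long exact sequence of Galois cohomology to lift $\reg_l(\alpha)\in H^1_g(K,M_l(r))$ through $\iota_{2r-k}$ after multiplying by an integer $n_0$ which is independent of both $\alpha$ and $l$.

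The first step is to identify the graded pieces of $Q_l$. By the Raskind--Xarles theorem quoted above, $\Gr_i^W Q_l$ for $i>2r-k$ is, up to an isogeny of uniformly bounded degree (trivial for almost all $l$), a lattice in $\Q_l(n)$ with $n=(2r-k-i)/2$ when $k+i$ is even, and is zero otherwise. The numerical hypothesis~\eqref{eq:numerics} forces $n\le -1$ in every nonzero case. For $l\ne p$ the local Euler--Poincar\'e formula combined with the vanishing of $H^0$ and $H^2$ (by local duality) gives $H^1(K,\Q_l(n))=0$ for $n<0$; for $l=p$ a standard Bloch--Kato computation yields $H^1_g(K,\Qp(n))=0$ for $n<0$; and $H^0(K,\Q_l(n))=0$ for every $n\ne 0$ and every $l$. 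A d\'evissage along the $W$-filtration then shows that both $H^1_g(K,Q_l)$ and $H^0(K,Q_l)$ are finite, with exponents uniformly bounded in $l$ and trivial for almost all $l$.

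Let $n_0$ be a positive integer annihilating both $H^1_g(K,Q_l)$ and $H^0(K,Q_l)$ for every $l$ simultaneously. Since $\reg_l(\alpha)\in H^1_g(K,M_l(r))$, its image in $H^1_g(K,Q_l)$ is killed by $n_0$, and the long exact sequence produces a lift $x\in H^1(K,W_{2r-k}M_l(r))$ of $n_0\reg_l(\alpha)$, i.e.\ satisfying $\iota_{2r-k}(x)=n_0\reg_l(\alpha)$. Any two such lifts differ by the image of $H^0(K,Q_l)$ under the connecting map, which is also killed by $n_0$; hence $n_0 x$ is canonical, and we set $\reg_l'(\alpha):=n_0 x$, recovering the formula in the proposition.

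The main obstacle is verifying that the lift $x$ may actually be chosen inside $H^1_g$; for $l\ne p$ this is automatic since $H^1_g=H^1$, but for $l=p$ one has to control the obstruction, which lives in the image of the cokernel of $H^0(K,M_p(r)\otimes B_{\dr})\to H^0(K,Q_p\otimes B_{\dr})$ inside $H^1(K,W_{2r-k}M_p(r)\otimes B_{\dr})$, and show that it too is annihilated by a bounded integer. The cleanest route is to observe that by Nekov\'a\v{r}--Nizio\l~\cite{Nek-Niz14} the $p$-adic regulator factors through a syntomic theory whose natural weight filtration is compatible with the \'etale one under the Fontaine comparison, so that the lift already exists on the syntomic side and is automatically in $H^1_g$; this is essentially the content of Section~\ref{sec:syntomic}.
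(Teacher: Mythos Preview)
Your argument is correct and follows essentially the same route as the paper: both identify the quotient $Q_l=M_l(r)/W_{2r-k}M_l(r)$ as an iterated extension of $\Z_l(j)$ with $j<0$, observe that $H^0$ and $H^1_g$ of these pieces are finite of uniformly bounded exponent, and then lift through the long exact sequence after multiplying by $n_0$. The paper simply cites \cite[Example~3.9]{Blo-Kat90} for the identity $H^1_g(K,\Z_l(j))=H^0(K,\Q_l/\Z_l(j))$ in place of your Euler--Poincar\'e/duality argument, and it leaves the $l=p$ well-definedness of the $H^1_g$ lift to Section~\ref{sec:syntomic} rather than sketching it here as you do; otherwise the proofs coincide.
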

\begin{proof}
  The quotient $M_l(r)/W_{2r-k} M_l(r)$ is, up to torsion, an iterated
  extension of copies of $\Zl(j)$ for $j<0$.   The $H^0$ of these groups is trivial, and by (\cite{Blo-Kat90} Example 3.9), we have $H^1_g(\Zl(j))=H^0(\Ql/\Zl(j))$, which are finite and killed by a fixed integer that is independent of $l$.
\end{proof}

Projecting on the quotient by $W_{2r-k-4} M_l(r) $ we obtain a map
\begin{equation}
  \label{eq:twostepreg}
  \hmot(X,k+1,r)_0 \to H_g^1(K,W_{2r-k} M_l(r)/W_{2r-k-4} M_l(r))\;.
\end{equation}
Now we can proceed in a similar manner to the proof of
Proposition~\ref{multn}. The Galois module
$W_{2r-k} M_l(r)/W_{2r-k-4} M_l(r)$ gives us an extension class
in
\[\ext^1(W_{2r-k} M_l(r)/W_{2r-k-2} M_l(r),W_{2r-k-2}
M_l(r)/W_{2r-k-4} M_l(r))\;.\] This is almost isomorphic to
$\ext^1 (T_{r}^{k-2r} \otimes \Zl, T_{r-1}^{k+2-2r} \otimes \Zl(1) )$
and so after multiplying  by an integer $n_1$ we get Galois
modules $M_l^\prime$ with a short exact sequence
\begin{equation}\label{mlpdiag}
  0\to T_{r-1}^{k+2-2r} \otimes \Zl(1) \to M_l^\prime \to  T_{r}^{k-2r} \otimes \Zl \to 0\;,
\end{equation}
and after multiplying again by an integer $n_2$ we get a regulator
map
\begin{equation}\label{regpp}
  \reg_l^{\prime\prime} : \hmot(X,k+1,r)_0 \to H_g^1(K,M_l^\prime)\;.
\end{equation}
We now consider boundary maps in the long cohomology sequence coming
from~\eqref{mlpdiag}. In degree $0$ we use Kummer theory to get the
map
\begin{equation}\label{augl}
  \ntl: T_{r}^{k-2r} \otimes \Zl \to H_g^1(K,T_{r-1}^{k+2-2r}\otimes \Zl(1) )\isom T_{r-1}^{k+2-2r} \otimes K^{\times(l)}\;,
\end{equation}
where $K^{\times(l)} $ is the $l$-completion of $K^\times$.  This is
essentially the monodromy paring considered by Raskind and
Xarles~\cite[p 6064]{Ras-Xar07a} (although they only define it in some
cases).  Let's call this the \emph{augmented monodromy} (at $l$).

Suppose now that $l\ne p$. The $l$-part of the tame inertia group is
isomorphic to $\Zl(1)$ as a Frobenius module and we identify the two
for convenience. The following is well known.
\begin{lemma}
  The following diagram commutes
  \begin{equation*}
    \xymatrix{
      K^\times \ar[rr]^{\textup{Kummer}} \ar[d]^{\val} & & H^1(K,\Zl(1))\ar[d] \\ 
      \Z \ar[rr] & & \Zl
    }
  \end{equation*}
  where the vertical map on the right is obtained by restriction to
  $\Zl(1)$.
\end{lemma}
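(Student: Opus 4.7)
The plan is to check commutativity on a topological set of generators of $K^\times$, namely a uniformizer $\pi$ and the units $\O_K^\times$. Both the left vertical $\val$ and the composite right side (Kummer followed by restriction to the tame $l$-inertia $\cong \Zl(1)$) are continuous homomorphisms, so it suffices to verify the identity on these two types of elements.

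For a unit $u \in \O_K^\times$, the left side is $\val(u)=0$. On the right, fix a compatible system of $l^n$-th roots $u^{1/l^n}$ in $\Kbar$. Since $l \neq p$, the polynomial $T^{l^n} - u$ is separable modulo $\pi$, so the extension $K(u^{1/l^n})/K$ is unramified. Therefore the inertia group, and in particular its $l$-part, acts trivially on every $u^{1/l^n}$, so the Kummer cocycle $\sigma \mapsto (\sigma(u^{1/l^n})/u^{1/l^n})_n$ restricts to zero on $\Zl(1)$. Both sides agree.

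For the uniformizer $\pi$, the left side is $\val(\pi)=1$. For the right side, choose a compatible system $\pi^{1/l^n}$; then $K(\pi^{1/l^n})/K^{nr}$ is totally and tamely ramified of degree $l^n$. The identification of the $l$-part of tame inertia with $\Zl(1)=\varprojlim \mu_{l^n}$ is given precisely by $\tau \mapsto (\tau(\pi^{1/l^n})/\pi^{1/l^n})_n$ (this is independent of the choice of $\pi^{1/l^n}$, since another choice differs by an $l^n$-th root of unity, which inertia fixes). Under this identification, the restriction of the Kummer class of $\pi$ is the identity element of $\Hom_{\text{cts}}(\Zl(1),\Zl(1)) = \Zl$, i.e.\ $1 \in \Zl$, matching $\val(\pi)$.

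The potential obstacle is purely a bookkeeping one: making sure the chosen identification of the tame $l$-inertia with $\Zl(1)$ is the standard one (via roots of a uniformizer), and that Kummer theory is normalized so that $\pi$ maps to the cocycle $\sigma \mapsto (\sigma(\pi^{1/l^n})/\pi^{1/l^n})_n$ rather than its inverse. Once these conventions are fixed consistently, the computation on $\pi$ and on units above gives the result, and no further structural argument is needed.
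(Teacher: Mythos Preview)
Your argument is correct and is the standard direct verification of this fact. The paper itself offers no proof: it simply states ``The following is well known'' and records the lemma, so there is nothing to compare against beyond noting that your computation on a uniformizer and on units is exactly how one unwinds the identification of the $l$-part of tame inertia with $\Zl(1)$ that the paper has just fixed.
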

From this Lemma and the relation between the monodoromy on \'etale
cohomology and on the $T$'s the following is easy for $l\ne p$. For $l=p$ it will be proved in Proposition~\ref{missingpiece}.
\begin{corollary}
  The map
  \[T_{r}^{k-2r} \otimes \Zl \xrightarrow{\ntl} T_{r-1}^{k+2-2r}\otimes K^{\times(l)}
  \xrightarrow{\val} T_{r-1}^{k+2-2r}\otimes
  \Zl\] is just the monodromy map~\eqref{monodromy} with the
  appropriate indexing tensored with $\Zl$.
\end{corollary}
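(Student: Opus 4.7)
The plan is to unwind the definition of $\ntl$ as the boundary of the short exact sequence \eqref{mlpdiag} and identify the composition $\val\circ\ntl$, via the Lemma, with the action of a topological generator of tame inertia on a lift, which by the Raskind--Xarles description of monodromy is precisely $N$.

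More concretely, I would first fix $x \in T_r^{k-2r}\otimes\Zl$ and lift it to some $\tilde{x}\in M_l^\prime$. By the standard recipe for the boundary map in group cohomology, $\ntl(x)$ is represented by the $1$-cocycle $\sigma\mapsto \sigma\tilde{x}-\tilde{x}$ taking values in $T_{r-1}^{k+2-2r}\otimes\Zl(1)$. Under the Kummer identification $H^1(K,\Zl(1))\cong K^{\times(l)}$, the Lemma just proved says that post-composing with $\val$ amounts to restricting a cocycle to the $l$-part of tame inertia $\Zl(1)\subset I_K$ and then projecting the coefficients $\Zl(1)$ onto $\Zl$ (the identification being via the chosen Frobenius-equivariant isomorphism). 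Thus $\val(\ntl(x))$ is the map $\tau\mapsto \tau\tilde{x}-\tilde{x}$, viewed as a homomorphism $\Zl(1)\to T_{r-1}^{k+2-2r}\otimes\Zl(1)$, and then trivialized using the chosen generator.

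Next, I would recall that the $l$-adic monodromy operator $N\colon M_l\to M_l(-1)$ is defined as the logarithm of a topological generator $\tau$ of the $l$-part of tame inertia acting on $M_l$, which makes sense because $\tau$ acts unipotently. On the graded piece $\Gr_{2r-k}^W M_l(r)$, the tangent of this action at the identity is precisely what becomes the monodromy on the $T$'s under the Raskind--Xarles identification \[\Gr_{i}^W M_l \tors \isom T_{(k+i)/2}^{-i}\tors\otimes\Zl((-k-i)/2)\] applied to the appropriate gradeds. Since $M_l'$ is (an integer multiple of) the two-step piece of $M_l(r)$ cut out by the weight filtration, the map $\tilde{x}\mapsto \tau\tilde{x}-\tilde{x}$ landing in the next graded piece is by construction the monodromy map $N\colon T_r^{k-2r}\to T_{r-1}^{k+2-2r}$ tensored with $\Zl$. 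Matching up the two descriptions gives the corollary.

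The main potential obstacle is book-keeping: one has to check that the Tate twists, the direction of the boundary map, and the sign conventions coming from $N$ versus its logarithm all line up, and in particular that passing to the $n_1 n_2$-multiple used to define $M_l^\prime$ and $\reg_l^{\prime\prime}$ does not distort the identification on the graded pieces (which it does not, since we are comparing maps between finitely generated groups up to the action of an integer that has already been inverted by passing through $\Gr^W$). Once these compatibilities are set up the statement is essentially a tautology for $l\ne p$; the serious content, namely the $l=p$ case, is deferred to Proposition~\ref{missingpiece}.
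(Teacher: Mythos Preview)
Your proposal is correct and follows essentially the same approach as the paper, which simply asserts that the result is ``easy'' from the Lemma (identifying $\val$ with restriction to tame inertia) together with the compatibility between the $l$-adic monodromy on \'etale cohomology and the combinatorial monodromy $N$ on the $T$'s established by Raskind--Xarles. You have spelled out in detail the boundary-cocycle computation that the paper leaves implicit, including the correct deferral of the $l=p$ case to Proposition~\ref{missingpiece}.
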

By Remark~\ref{inj} the composed map is almost injective.
\begin{lemma}\label{keylemma}
  The obvious map from $K^\times$ to the pushout of
  \begin{equation*}
    \xymatrix{
      {\prod_l}  K^{\times(l)} \ar[r]^{\val} & \prod_l \Zl\\
      & \Z \ar[u]
    }
  \end{equation*}
  is an isomorphism. For $l\ne p$ we have the short exact sequence
  \begin{equation*}
    0\to (F^\times)_{l\textup{-torsion}} \to  K^{\times(l)} \xrightarrow{\val} \Zl \to 0\;.
  \end{equation*}
\end{lemma}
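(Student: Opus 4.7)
The plan is to compute both sides explicitly using the structural decomposition of $K^\times$. A choice of uniformizer $\pi$ splits the short exact sequence $0\to R^\times\to K^\times\xrightarrow{\val}\Z\to 0$, and the Teichm\"uller lift provides a further splitting $R^\times\cong F^\times\oplus U^{(1)}$, where $U^{(1)}:=1+\pi R$ is pro-$p$.

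For the second (easier) assertion, I apply $l$-completion with $l\ne p$. Since $U^{(1)}$ is pro-$p$ it is $l$-divisible, so its $l$-completion vanishes; the $l$-completion of $F^\times$ is its $l$-torsion subgroup; and $\Z$ completes to $\Zl$. Because the sequence is already split, $l$-completion preserves exactness and yields
\[
  0\to (F^\times)_{l\text{-tors}}\to K^{\times(l)}\xrightarrow{\val}\Zl\to 0.
\]

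For the first (main) assertion, I next handle $l=p$ in parallel: here $F^\times$ dies (its order is prime to $p$) while $U^{(1)}$ is preserved (it is already $p$-complete), giving $K^{\times(p)}\cong \Zp\oplus U^{(1)}$. Assembling over all $l$,
\[
  \prod_l K^{\times(l)}\;\cong\;\Bigl(\prod_l \Zl\Bigr)\oplus F^\times\oplus U^{(1)},
\]
with $\val$ being projection to the first factor. The limit of the drawn cospan (what the statement calls a ``pushout'') against $\Z\hookrightarrow\prod_l\Zl$ therefore equals $\Z\oplus F^\times\oplus U^{(1)}$, which under the uniformizer--Teichm\"uller identification is precisely $K^\times$. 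A quick check shows that the canonical completion map $K^\times\to\prod_l K^{\times(l)}$ matches this identification factor by factor, so it realizes the isomorphism onto the limit.

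No serious obstacle arises: the argument is bookkeeping with the structure of $K^\times$ together with the two elementary facts that $l$-completion commutes with finite direct sums and kills $l$-divisible modules. The only bit of care is to treat $l=p$ separately from the remaining primes, because $U^{(1)}$ and $F^\times$ trade roles there, and it is exactly this swap that causes the limit to reconstitute $K^\times$ rather than a proper subgroup or quotient.
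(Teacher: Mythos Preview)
Your argument is correct and is in the same spirit as the paper's proof, which is really just a one-line hint: the paper says the result ``follows from the fact that the group of units in the ring of integers in $K$ is compact and complete with respect to its subgroups of finite index,'' i.e.\ that $R^\times$ is profinite and hence equals the product of its pro-$l$ completions. You have unpacked this by using the uniformizer and Teichm\"uller splittings to decompose $K^\times\cong \Z\oplus F^\times\oplus U^{(1)}$ and then computing each $l$-completion explicitly, which is exactly how one makes the paper's hint precise; your added observation that the diagram is really a pullback (limit of a cospan) rather than a pushout is also on point.
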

\begin{proof}  This is well-known and follows from the fact that the group of units in the ring of integers in $K$ is compact and complete with respect to its subgroups of finite index.  This is not the case for a ``larger'' nonarchimedean valued field such as $\Cp$.
\end{proof}
\begin{corollary}
  There is an \emph{augmented monodromy} map
  \begin{equation}\label{aug}
    T_{r}^{k-2r}  \xrightarrow{\nt} T_{r-1}^{k+2-2r}\otimes K^{\times}
  \end{equation}
  which gives the augmented monodromy at $l$~\eqref{augl} after
 $l$-completion for each $l$.
\end{corollary}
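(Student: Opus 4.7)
The plan is to construct $\tilde{N}$ from the collection of $\tilde{N}_l$ and the integral monodromy $N$ via the pullback description of $K^\times$ provided by Lemma~\ref{keylemma}. Write $A:=T_{r-1}^{k+2-2r}$. For each prime $l$, precomposing $\tilde{N}_l$ with the natural map $T_r^{k-2r}\to T_r^{k-2r}\otimes \Zl$ and then taking the product over all $l$ produces a map
\begin{equation*}
  T_r^{k-2r}\longrightarrow A\otimes {\prod_l} K^{\times(l)}.
\end{equation*}
In parallel, the integral monodromy $N$ of~\eqref{monodromy} gives $T_r^{k-2r}\to A = A\otimes \Z$. The preceding Corollary, applied prime by prime, asserts that postcomposing the first map with the product of the $\val$'s equals the composition of $N$ with the natural inclusion $A\hookrightarrow A\otimes \prod_l \Zl$. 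Both maps therefore factor jointly through the pullback
\begin{equation*}
  P_A := \Bigl(A\otimes {\prod_l} K^{\times(l)}\Bigr) \times_{A\otimes \prod_l \Zl} A\;.
\end{equation*}

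The heart of the argument is to identify $P_A$ with $A\otimes K^\times$, which is Lemma~\ref{keylemma} when $A=\Z$. In general, split $A = \Z^d\oplus A_{\textup{tor}}$: both $P_A$ and $A\otimes K^\times$ distribute over this decomposition, since finite pullbacks commute with finite direct sums and tensor product is additive. It therefore suffices to treat separately the case $A$ free (where the identification follows by taking $d$ copies of the Lemma) and the case $A$ finite. For finite $A$ the key input is the structural decomposition $K^\times\cong \pi^{\Z}\oplus \mu(K)\oplus(1+\pi R)$: its $l$-completion splits off $\pi^{\Zl}$ and the $l$-primary part of $\mu(K)$, while $(1+\pi R)^{(l)}$ contributes only for $l=p$. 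Since $\mu(K)$ is already the finite direct sum of its $l$-primary pieces, tensoring with a finite abelian group commutes prime-by-prime with forming the pullback, and a direct comparison produces the required isomorphism $P_A\isom A\otimes K^\times$.

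Once this identification is in hand, the universal property of the pullback yields the desired map $\tilde{N}\colon T_r^{k-2r}\to A\otimes K^\times$. That its $l$-completion recovers $\tilde{N}_l$ is automatic from the construction combined with the natural isomorphism $(A\otimes K^\times)\otimes \Zl \isom A\otimes K^{\times(l)}$ arising from the same prime-by-prime decomposition. The principal technical point, and the only one that does not follow formally from Lemma~\ref{keylemma}, is the identification $P_A\isom A\otimes K^\times$ in the presence of torsion in $A$; it is not conceptually deep but genuinely requires the structural information about $K^\times$ above, since $A\otimes(-)$ is not left exact and so the pullback description is not automatically preserved under tensoring.
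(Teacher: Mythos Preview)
Your approach is correct and aligns with the paper, which states this Corollary without proof as an immediate consequence of Lemma~\ref{keylemma} and the preceding Corollary identifying $\val\circ\ntl$ with $N\otimes\Zl$. You have carefully spelled out the one genuinely nontrivial point: that the pullback description of $K^\times$ in Lemma~\ref{keylemma} survives tensoring with the finitely generated group $A=T_{r-1}^{k+2-2r}$, which is not automatic since $A\otimes(-)$ is not left exact. (A slightly cleaner route than your free/torsion split: the splitting $K^\times\cong\pi^{\Z}\times R^\times$ makes the pullback visibly $A\times(A\otimes R^\times)=A\otimes K^\times$ for arbitrary $A$, with no case analysis needed.)

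One imprecision worth flagging: in the last paragraph you invoke an isomorphism $(A\otimes K^\times)\otimes \Zl \isom A\otimes K^{\times(l)}$. Read literally this fails---for $l\ne p$ the summand $(1+\pi R)\otimes_{\Z}\Zl$ is nonzero (for instance $\Zp\otimes_{\Z}\Zl\ne 0$), whereas its contribution to $K^{\times(l)}$ vanishes. What you actually need, and what does hold for finitely generated $A$, is $(A\otimes K^\times)^{(l)}\isom A\otimes K^{\times(l)}$ with $(-)^{(l)}=\varprojlim_n(-)/l^n$ the $l$-adic completion; this follows from the same structural decomposition of $K^\times$. With this correction the argument stands.
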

We can finally define one of the main objects of this paper.
\begin{definition}
  The higher toric intermediate Jacobian of $X$ in degree $k+1$ and
  twist $r$ is defined by
  \begin{equation*}
    \hT(X,k+1,r) := \coker \left(  T_{r}^{k-2r}  \xrightarrow{\nt} T_{r-1}^{k+2-2r}\otimes K^{\times} \right)\;.
  \end{equation*}
\end{definition}
\begin{proposition}
  The boundary map in the long exact cohomology sequence
  of~\eqref{mlpdiag},
  \begin{equation*}
    H_g^1(K,T_{r}^{k-2r} \otimes \Zl)  \to H^2(K,T_{r-1}^{k+2-2r} \otimes \Zl(1))\;,
  \end{equation*}
  is almost injective.
\end{proposition}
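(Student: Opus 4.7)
The plan is to identify $\partial_1$, after restriction to $H_g^1$, with the monodromy $N$ tensored with $\Zl$, and then to appeal to Remark~\ref{inj}.

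First I would recall that for any short exact sequence $0 \to A \to M \to C \to 0$ of Galois modules, all the connecting homomorphisms are cup product with the extension class $e \in \ext^1_G(C,A)$. Applied to~\eqref{mlpdiag}, and using the Kummer identification
\[
  H^1(K, T_{r-1}^{k+2-2r} \otimes \Zl(1)) \isom T_{r-1}^{k+2-2r} \otimes K^{\times(l)},
\]
this class is precisely the augmented monodromy $\ntl$ of~\eqref{augl}; indeed, the degree zero boundary map $\partial_0$ is $\ntl$ by construction. Hence $\partial_1$ is cup product with the same class.

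Next I would restrict to $H_g^1$. Under the simplifying assumption the Galois action on $T_{r}^{k-2r}$ is trivial, so decomposing $T_{r}^{k-2r}$ into its free and torsion parts (and noting that $H_g^1$ of a finite Galois module equals $H^1$, which is itself finite), one finds that modulo uniformly bounded finite contributions which vanish for almost all $l$,
\[
  H_g^1(K, T_{r}^{k-2r} \otimes \Zl) \isom T_{r}^{k-2r} \otimes H_g^1(K, \Zl) \isom T_{r}^{k-2r} \otimes \Zl,
\]
where $H_g^1(K, \Zl) \isom \Zl$ is generated by the unramified valuation character $v$. For $l \ne p$ this is local class field theory together with $H_g^1 = H^1$; for $l = p$ it uses that $H_g^1(K, \Qp)$ is one-dimensional and spanned by the unramified Frobenius character, which corresponds under reciprocity to $v$. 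Likewise $H^2(K, \Zl(1)) \isom \Zl$ (inverse limit of $H^2(K, \mu_{l^n}) = \Z/l^n$), and the cup product $H^1(K, \Zl) \otimes H^1(K, \Zl(1)) \to H^2(K, \Zl(1))$ is the evaluation pairing $\Hom(K^\times, \Zl) \otimes K^{\times(l)} \to \Zl$. Then for $\alpha \otimes v \in T_{r}^{k-2r} \otimes H_g^1(K, \Zl)$ I would compute
\[
  \partial_1(\alpha \otimes v) = \ntl(\alpha) \cup v = \val(\ntl(\alpha)),
\]
which by the Corollary above (for $l \ne p$) and Proposition~\ref{missingpiece} (for $l = p$) equals $N(\alpha) \otimes 1$ in $T_{r-1}^{k+2-2r} \otimes \Zl$. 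Thus $\partial_1|_{H_g^1}$ agrees, up to bounded finite error, with $N \otimes \Zl$.

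To conclude I would invoke Remark~\ref{inj}: under the hypothesis $k-2r \le -1$ the monodromy $N : T_{r}^{k-2r} \to T_{r-1}^{k+2-2r}$ is injective after tensoring with $\Q$, so $\ker N$ is a fixed finite torsion subgroup, and hence $\ker(N \otimes \Zl)$ is finite for every $l$ and zero for all $l$ not dividing $|\ker N|$. Combined with the uniformly bounded finite discrepancies above, this gives almost injectivity of $\partial_1$ on $H_g^1$. The main obstacle I expect is the case $l = p$: confirming that $H_g^1(K, \Zp)$ reduces to the line spanned by $v$ and that $\val \circ \tilde{N}_p = N \otimes \Zp$, both of which rely on genuine $p$-adic Hodge-theoretic input (in particular the forward-referenced Proposition~\ref{missingpiece}). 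For $l \ne p$ every identification used above is purely formal.
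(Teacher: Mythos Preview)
Your argument is correct and takes a genuinely different route from the paper's. The paper proves almost injectivity \emph{by duality}: for $l\ne p$ it invokes local Tate duality together with the intersection pairing~\eqref{eq:pairing} and the relation~\eqref{eq:Nselfd} to identify the boundary map $\partial_1$ (up to almost isomorphism) with the \emph{dual} of an augmented monodromy map
\[
  H^0(K,T_{d-r+1}^{2r-k-2}\otimes\Zl)\;\longrightarrow\;H^1(K,T_{d-r}^{2r-k}\otimes\Zl(1)),
\]
which lies in the range where $N$ is almost surjective; Lemma~\ref{keylemma} then yields almost surjectivity of this map, hence almost injectivity of its dual. Your approach instead computes $\partial_1$ \emph{directly}: you identify the extension class of~\eqref{mlpdiag} with $\ntl$, use that $H_g^1(K,\Zl)$ is (almost) the $\Zl$-line spanned by the valuation character, and reduce $\partial_1|_{H_g^1}$ to $\val\circ\ntl=N\otimes\Zl$, whose almost injectivity is Remark~\ref{inj}. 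Your method avoids the $T$-duality~\eqref{eq:pairing} and the self-adjointness~\eqref{eq:Nselfd} entirely, at the cost of the extra input that $H_g^1(K,\Qp)$ is one-dimensional (which you correctly note requires genuine $p$-adic Hodge theory); the paper's method packages the $l=p$ case wholesale into the syntomic computation of Section~\ref{sec:syntomic}. Both routes lean on the same forward reference (Proposition~\ref{missingpiece}) for $l=p$, so neither is more self-contained there.
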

\begin{proof}
  Suppose first that $l\ne p$. By local Tate duality and by the
  duality induced by the pairing~\ref{eq:pairing} and the relation
  with the monodromy operator given in~\eqref{eq:Nselfd} we see that
  this map is almost dual to the map
  \begin{equation*}
    H^0(K,T_{d-r+1}^{2r-k-2} \otimes \Zl)  \to H^1(K,T_{d-r}^{2r-k} \otimes \Zl(1))
  \end{equation*}
  obtained from the dual of~\eqref{eq:Nselfd}, but this is again the augmented
  monodromy map, this time in the range where after applying the valuation it is almost surjective, hence is almost surjective by Lemma~\ref{keylemma}. For the case $l=p$ see the syntomic
  theory of Section~\ref{sec:syntomic}, in particular, Theorem~\ref{logissyn}.
\end{proof}
\begin{corollary}\label{nthreeneed}
  The group $ H_g^1(K,M_l^\prime) $ is almost isomorphic to
  \[\coker \left( T_{r}^{k-2r} \otimes \Zl \xrightarrow{\ntl} T_{r-1}^{k+2-2r}\otimes K^{\times(l)} 
     \right)\;. \]
\end{corollary}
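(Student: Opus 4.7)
The plan is to read the corollary directly off the long exact Galois cohomology sequence of \eqref{mlpdiag}, after identifying each term. Concretely, applying Galois cohomology to
\[0 \to T_{r-1}^{k+2-2r}\otimes \Zl(1) \to M_l^\prime \to T_r^{k-2r}\otimes \Zl \to 0\]
and restricting to the $H_g^1$-subgroups gives (provisionally) a five-term sequence
\[H^0(K, T_r^{k-2r}\otimes \Zl) \to H_g^1(K, T_{r-1}^{k+2-2r}\otimes \Zl(1)) \to H_g^1(K, M_l^\prime) \to H_g^1(K, T_r^{k-2r}\otimes \Zl) \to H^2(K, T_{r-1}^{k+2-2r}\otimes \Zl(1))\;.\]

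Next I would identify the outer terms. By the simplifying assumption that Frobenius acts trivially on the $\cijk$, the $H^0$ on the left is simply $T_r^{k-2r}\otimes \Zl$. Kummer theory, combined with the description of $H_g^1(K,\Zl(1))$ recalled in the discussion preceding Proposition~\ref{multn}, identifies $H_g^1(K,T_{r-1}^{k+2-2r}\otimes \Zl(1))$ with $T_{r-1}^{k+2-2r}\otimes K^{\times(l)}$. Most importantly, by the very definition \eqref{augl} of $\ntl$, the connecting map $H^0 \to H_g^1$ is the augmented monodromy at $l$. Hence the sequence becomes
\[T_r^{k-2r}\otimes \Zl \xrightarrow{\ntl} T_{r-1}^{k+2-2r}\otimes K^{\times(l)} \to H_g^1(K,M_l^\prime) \to H_g^1(K, T_r^{k-2r}\otimes \Zl) \to H^2(K, T_{r-1}^{k+2-2r}\otimes \Zl(1))\;.\]
Extracting a short exact sequence from the middle three terms gives
\[0 \to \coker(\ntl) \to H_g^1(K,M_l^\prime) \to \ker\!\left[H_g^1(K, T_r^{k-2r}\otimes \Zl) \to H^2(K, T_{r-1}^{k+2-2r}\otimes \Zl(1))\right] \to 0\;.\]
The preceding proposition is exactly the statement that this last kernel is an almost zero group, so the remaining map $\coker(\ntl)\to H_g^1(K,M_l^\prime)$ is an almost isomorphism, as required.

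The main obstacle is technical rather than conceptual, namely justifying that the truncated sequence restricted to $H_g^1$ is actually exact at the relevant spots. For $l\ne p$ this is free of charge, since $H_g^1 = H^1$. For $l=p$ one has to check that the image of the connecting map from $H^0$ lies in $H_g^1$, and that the subspace of classes killed by the map to $H^2$ lies in $H_g^1$; both assertions come out of the compatibility of the $H_g^1$-conditions with the semistable weight filtration on $M_l^\prime$, which is precisely what the syntomic theory of Section~\ref{sec:syntomic} provides (and which is also what the previous proposition cites in its $l=p$ clause). Once these Bloch--Kato local-condition checks are in hand, the corollary follows formally from the long exact sequence.
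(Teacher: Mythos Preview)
Your argument is correct and is exactly the reasoning the paper intends: the corollary is stated without proof because it follows immediately from the long exact sequence of~\eqref{mlpdiag} together with the identification~\eqref{augl} of the boundary map with $\ntl$ and the almost-injectivity of the next boundary map supplied by the preceding proposition. Your discussion of the $l=p$ exactness issue is also on target and mirrors the paper's handling, which likewise defers that case to the syntomic analysis in Section~\ref{sec:syntomic}.
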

\begin{definition}\label{toricl}
  The toric regulator completed at $l$ is the map $\regtl$ defined as
  the composition
  \begin{equation*}
    \hmot(X,k+1,r) \xrightarrow{ \reg_l^{\prime\prime}}  H_g^1(K,M_l^\prime) \xrightarrow{n_3} \coker \left( T_{r}^{k-2r} \otimes \Zl \xrightarrow{\ntl} T_{r-1}^{k+2-2r}\otimes K^{\times(l)}   \right)
  \end{equation*}
  of the regulator $ \reg_l^{\prime\prime} $ from~\eqref{regpp} and
  multiplication by an integer $n_3$ which is done to eliminate the
  difference between the two groups in Corollary~\ref{nthreeneed}.
\end{definition}
The main object of this work is now given by the following result.
\begin{theorem}
  Suppose conjecture~\ref{conjsreek} is true. Then there
  exists a unique map, called the \emph{toric regulator},
  \begin{equation*}
    \hmot(X,k+1,r) \xrightarrow{\regt}  \hT(X,k+1,r)\;,
  \end{equation*}
  such that for each prime $l$ the composed map
  \begin{equation*}
    \begin{split}
      \hmot(X,k+1,r) \xrightarrow{\regt} \hT(X,k+1,r) &= \coker \left(
        T_{r}^{k-2r} \xrightarrow{\nt} T_{r-1}^{k+2-2r}\otimes
        K^{\times} \right)\\ &\xrightarrow{\otimes \Z_l} \coker \left(
        T_{r}^{k-2r} \otimes \Zl \xrightarrow{\ntl} T_{r-1}^{k+2-2r}\otimes K^{\times(l)} \right)
    \end{split}
  \end{equation*}
  is the toric regulator completed at $l$ of Definition~\ref{toricl}.
\end{theorem}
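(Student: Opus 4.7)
The plan is to define $\regt$ as the factorization of the family $(\regtl)_{l}$ through the pullback description of $K^{\times}$ supplied by Lemma~\ref{keylemma}, and to obtain uniqueness from the injectivity of that pullback comparison.

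First I would tensor the pullback square of Lemma~\ref{keylemma} with the finitely generated abelian group $T_{r-1}^{k+2-2r}$. Tensoring does not commute with infinite products in general, but the failure is of bounded exponent and can be absorbed by multiplication by a further integer, in the spirit of the $n_{i}$'s introduced throughout this section. This exhibits $T_{r-1}^{k+2-2r}\otimes K^{\times}$, up to a bounded integer factor, as the pullback of
\[
  \prod_{l} T_{r-1}^{k+2-2r}\otimes K^{\times(l)} \xrightarrow{\val} \prod_{l} T_{r-1}^{k+2-2r}\otimes \Zl \longleftarrow T_{r-1}^{k+2-2r}\otimes \Z .
\]
Passing to cokernels of $\nt$, $\ntl$, and the integral monodromy $N$ respectively, and appealing to Corollary~\ref{nthreeneed} to identify the $\ntl$-cokernels with $H^{1}_{g}(K,M_{l}^{\prime})$ (again up to a bounded integer factor), one obtains a pullback presentation of $\hT(X,k+1,r)$ by the factors $\prod_{l} H^{1}_{g}(K,M_{l}^{\prime})$ and $V:=\coker\bigl(T_{r}^{k-2r}\xrightarrow{N} T_{r-1}^{k+2-2r}\bigr)$, with the comparison going through $\prod_{l}V\otimes\Zl$.

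Next I would assemble a morphism into this pullback. The family $(\regtl)_{l}$ of Definition~\ref{toricl} provides the first leg $\hmot(X,k+1,r)\to \prod_{l} H^{1}_{g}(K,M_{l}^{\prime})$. For the second leg, Conjecture~\ref{conjsreek} supplies an integral Sreekantan-style regulator $\hmot(X,k+1,r)\to V$ whose $l$-completion coincides with $\val\circ\regtl$ for every $l$; this is precisely the compatibility the pullback requires. The universal property then furnishes the canonical factorization $\regt$, and uniqueness is automatic because any map into a pullback is determined by its legs while the $l$-adic leg is forced to equal $\regtl$.

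The bookkeeping steps --- verifying that the integers $n_{0},n_{1},n_{2},n_{3}$ of Proposition~\ref{multn} and Definition~\ref{toricl} can be chosen uniformly in $l$, and that the identification $\coker(\nt)\otimes\Zl\isom\coker(\ntl)$ holds after a common integer multiple --- are tedious but routine, following the pattern already established in this section. The genuine obstacle is the appeal to Conjecture~\ref{conjsreek}: it encodes an integral compatibility among the valuations $\val\circ\regtl$ across all primes that cannot be extracted one prime at a time from the purely $l$-adic constructions, which is why the theorem must be stated conditionally on that conjecture.
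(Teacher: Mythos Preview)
Your proposal is correct and follows essentially the same strategy as the paper: reduce via the pullback description in Lemma~\ref{keylemma} to the problem of supplying a ``rational leg'' compatible with the valuations $\val\circ\regtl$, and then invoke the Sreekantan regulator together with Conjecture~\ref{conjsreek} to provide that leg. Uniqueness is obtained in both cases from the injectivity of $\hT(X,k+1,r)$ into the product of its $l$-completions.

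One small point of phrasing: you write that Conjecture~\ref{conjsreek} ``supplies an integral Sreekantan-style regulator $\hmot(X,k+1,r)\to V$.'' Strictly speaking it does not. The Sreekantan regulator $r_{\mathcal{D}}$ is defined in Section~\ref{sreek} independently of the conjecture, and it lands in the $\Q$-vector space $\coker(N)\otimes\Q$, not in the integral $V$; the conjecture only asserts the commutativity of diagram~\eqref{sreekcom}. This is exactly how the paper frames it. Of course, once that commutativity is known, the image of $r_{\mathcal{D}}$ in $\coker(N)\otimes\Ql$ agrees with $\val\circ\regtl$ and therefore lies in the image of $\coker(N)\otimes\Zl$ for every $l$, which forces $r_{\mathcal{D}}$ to land in $V/\mathrm{tor}\subset V\otimes\Q$; so your integral formulation is a consequence rather than an input. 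This is cosmetic and does not affect the argument.
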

\begin{proof}
  By Lemma~\ref{keylemma} it suffices to show the existence of a map
  \begin{equation*}
    r_{\mathcal{D}}  :  \hmot(X,k+1,r) \to  \coker \left( T_{r}^{k-2r} \xrightarrow{N} T_{r-1}^{k+2-2r} \right)\otimes \Q
  \end{equation*}
  such that for any prime $l$ the diagram
  \begin{equation}\label{sreekcom}
    \xymatrix{
      { \hmot(X,k+1,r) } \ar[r]^{\regtl} \ar[d]^{ r_{\mathcal{D}} } & 
      \coker \left( T_{r}^{k-2r} \otimes \Zl \xrightarrow{\ntl} T_{r-1}^{k+2-2r} \otimes  K^{\times(l)}  \right) \ar[d]^{\val} \\
      { \coker \left( T_{r}^{k-2r} \xrightarrow{N} T_{r-1}^{k+2-2r} \right)\otimes \Q} \ar[r] &
      \coker \left( T_{r}^{k-2r} \otimes \Ql \xrightarrow{N\otimes \Ql} T_{r-1}^{k+2-2r} \otimes \Ql  \right)
    }
  \end{equation}
  commutes. In Section~\ref{sreek} we will show how the work of
  Sreekantan~\cite{Sre10} gives precisely such a map
  and Conjecture~\ref{conjsreek} will say that the
  diagram~\eqref{sreekcom} commutes.
  This proved the theorem.
\end{proof}
\begin{remark}
  We hope to prove Conjecture~\ref{conjsreek} in future work.
  It is somewhat problematic that the toric regulator is only defined
  after multiplication by an integer, thereby erasing finer roots of
  unity information. Fortunately, In various situation no such
  multiplication is required. We could conjecture that in fact the
  resulting regulator has a canonical root, but we have no evidence to
  support this.
\end{remark}

\section{The relation with the regulator of Sreekantan}
\label{sreek}
In~\cite{Sre10} Sreekantan constructs a new type of regulator and
conjectures relations with special values of $L$-functions in the
function field case. We will
conjecture
that, in a very precise sense, the Sreekantan regulator is exactly the the toric regulator followed by the
valuation map. As explained in
Section~\ref{sec:toric-regulator}, this is an important step in
actually showing the existence of the toric regulator.

Let us recall the setup for Sreekantan's work. Let $X$ be smooth and
proper over $K$ with semi-stable reduction. In his setup we are not
assuming that the reduction is completely degenerate. Sreekantan
starts with a variety over a global field but this is for the sake of
getting results about L-functions and he is really only interested in
the completion at a finite prime for computing the regulator.

Sreekantan defines certain cohomology groups, for which he first
recalls work of Consani. Consani defines~\cite[(3.13)]{Con98} groups
$K^{i,j,k}$. In fact, it is easy to check (see also Observation 1 on
p. 273 of~\cite{Ras-Xar07}, noting that Consani's convention is the
same as that of~\cite{BGS97} and~\cite{GuiNav90}) that we have
\begin{equation*}
  K^{i,j,k} = C_{\frac{d+j-i}{2}}^{i,k} \otimes \Q\;.
\end{equation*}
There are differentials and a monodromy operator which are the same
as~\cite{Ras-Xar07} and Consani defines (immediately following (3.1))
\begin{equation*}
  K^{i,j} = \bigoplus_k K^{i,j,k} = C_{\frac{d+j-i}{2}}^{i} \otimes \Q \;.
\end{equation*}
We make the following definition after Consani.
\newcommand{\Cons}{\mathcal{C}}
\begin{definition}[Consani~\cite{Con98}]
  The Consani complex with twist $r$ is the complex
  \begin{equation*}
    \Cons(r) = \Cone(N: K^{*-2r,*-d} \to K^{*-2r+2,*-d})
    = \Cone(N: C_{r}^{*-2r} \to C_{r-1}^{*-2r+2} )\otimes \Q \;.
  \end{equation*}
\end{definition}
For the normalization of the cone here see~\cite[p. 331]{Con98}.
\begin{definition}
  The Deligne cohomology group $H_{\mathcal{D}}^{k+1}(X,\Q(r)) $ is
  the $k+1-2r$ cohomology of the Consani complex $\Cons(r)$.
\end{definition}
Comparing with Sreekantan the reader will observe that we have removed
the $v$-notation, which was to indicate working with the completion of
the global $X$ at the finite prime $v$.
\begin{proposition}
  Suppose now that $X$ has totally degenerate reduction. Then there is
  a long exact sequence
  \begin{equation*}
    \cdots \to T_{r}^{k-2r} \otimes \Q \to T_{r-1}^{k+2-2r}  \otimes \Q\to
    H_{\mathcal{D}}^{k+1}(X,\Q(r)) \to T_{r}^{k+1-2r} \otimes \Q \to
    T_{r-1}^{k+3-2r}  \otimes \Q\to \cdots\;.
  \end{equation*}
\end{proposition}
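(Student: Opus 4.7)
The proof is a formal consequence of the long exact sequence for a mapping cone; essentially no new ideas beyond bookkeeping of indices are required. The plan is as follows.

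First, by the very definition $\Cons(r) = \Cone(N\colon C_r^{\bullet-2r} \to C_{r-1}^{\bullet-2r+2})\otimes\Q$, there is a canonical short exact sequence of complexes
\[
0 \to C_{r-1}^{\bullet-2r+2}\otimes\Q \to \Cons(r) \to \bigl(C_r^{\bullet-2r}\otimes\Q\bigr)[1] \to 0,
\]
and passing to the associated long exact sequence of cohomology immediately gives a six-term sequence whose connecting homomorphism is, by the general yoga of mapping cones, induced (up to sign) by $N$ itself.

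Next, I would identify the cohomology of the two outer complexes using $T_j^i = H^i(C_j^\bullet)$: after the indicated shifts, $H^n(C_r^{\bullet-2r}\otimes\Q) = T_r^{n-2r}\otimes\Q$ and $H^n(C_{r-1}^{\bullet-2r+2}\otimes\Q) = T_{r-1}^{n+2-2r}\otimes\Q$. Substituting into the mapping cone sequence and using the convention that $H^{k+1}_{\mathcal D}(X,\Q(r))$ is the $(k+1-2r)$-th cohomology of $\Cons(r)$, the two groups flanking $H_{\mathcal D}^{k+1}$ come out as $T_{r-1}^{k+2-2r}\otimes\Q$ on the left and $T_r^{k+1-2r}\otimes\Q$ on the right, with the differentials appearing on either side as the monodromy operator $N$, as desired.

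The one mildly delicate step, and the place where one has to be careful, is reconciling Consani's normalization of the cone (the one referenced on p.~331 of \cite{Con98}) with the shift convention built into the definition of $H_{\mathcal D}^{k+1}(X,\Q(r))$ as the $(k+1-2r)$-th cohomology of $\Cons(r)$. This is not difficult, but it must be checked: one has to track both the sign of $N$ and the degree shift so that the index labels on the $T_j^i$ in the resulting long exact sequence come out exactly as in the statement, and in particular so that the four displayed maps are indeed the monodromy in the correct degrees. Once this bookkeeping is carried out, the proposition is immediate.
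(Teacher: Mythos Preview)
Your proposal is correct and follows exactly the paper's approach: the paper's entire proof is the single sentence ``This follows immediately from the definition of the Consani complex as a cone.'' You have simply unpacked that sentence, writing out the standard short exact sequence $0\to B\to\Cone(f)\to A[1]\to 0$, identifying $H^n$ of the outer complexes with the appropriate $T_j^i\otimes\Q$, and noting that the connecting map is $N$; your caution about reconciling Consani's cone normalization with the shift in the definition of $H_{\mathcal D}^{k+1}$ is well placed but is precisely the bookkeeping the paper leaves to the reader.
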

\begin{proof}
  This follows immediately from the definition of the Consani complex
  as a cone.
\end{proof}
\begin{proposition}
  Suppose that the $\bar{Y}_I$ are toric or cellular varieties (which is the case for all known examples of varieties with totally degenerate reduction) .  Then we have an isomorphism
  \begin{equation}\label{Consisom}
    H_{\mathcal{D}}^{k+1}(X,\Q(r)) \isom CH^{r-1}(Y,2r-k-2)\otimes \Q
  \end{equation}
  with Bloch's higher Chow groups.
\end{proposition}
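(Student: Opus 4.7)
The plan is to identify both sides with the cohomology of a single bicomplex of stratum Chow groups, arising from a descent spectral sequence for $CH^{r-1}(Y,\ast)_\Q$ that collapses thanks to the toric/cellular hypothesis.

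First I would establish a vanishing on strata. For any cellular variety $V$ over $\bar F$, the motivic projective bundle formula applied inductively to a cell decomposition gives a splitting
\begin{equation*}
  CH^a(V, b)_\Q \;\cong\; \bigoplus_\alpha CH^{a-d_\alpha}(\operatorname{Spec}\bar F, b)_\Q,
\end{equation*}
and Quillen's calculation gives $K_b(\bar F)_\Q = 0$ for $b\ge 1$, so that $CH^\ast(\operatorname{Spec}\bar F,b)_\Q = 0$ for $b\ge 1$. Under the hypothesis this yields $CH^a(\Ymm m, b)_\Q = 0$ for all $b\ge 1$, so only ordinary Chow groups of strata will contribute.

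Next I would combine \v Cech descent along the simplicial scheme $\Ymm{\bullet+1}\to Y$ with Bloch's localization sequences attached to the stratification $Y\supset \Ymm{2}\supset \Ymm{3}\supset\cdots$ to build a bicomplex of higher Chow groups of strata that totalizes to $CH^{r-1}(Y,\ast)_\Q$. The horizontal differentials are the alternating-sum pullbacks $\theta_{\ast,\ast}$ of Section~\ref{sec:toric-regulator}, and the vertical differentials are the Gysin pushforwards $\delta_{\ast,\ast}$. The vanishing from the previous paragraph collapses the bicomplex onto the row $b=0$, leaving a complex whose entries are direct sums $\bigoplus_k CH^{i+j-k}(\Ymm{2k-i+1})_\Q = \cij\otimes\Q$; the surviving vertical direction, which reindexes a stratum by one extra depth together with the appropriate Tate twist, is exactly Consani's monodromy $N:\cij\to C_{j-1}^{i+2}$, which acts as ``the identity on common factors''. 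Totalizing this collapsed bicomplex reproduces the Consani cone $\Cons(r)$ with the shift conventions of~\cite[p.~331]{Con98}; taking $(k{+}1)$st cohomology then gives $CH^{r-1}(Y, 2r{-}k{-}2)_\Q$, which is exactly~\eqref{Consisom}.

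The hard part will be the second step: setting up the bicomplex precisely enough to identify its vertical Gysin direction with Consani's $N$ in the correct bidegrees and signs, i.e.\ establishing a motivic counterpart of the Steenbrink weight spectral sequence for the NCD $Y$. This is classical on de Rham and \'etale cohomology, and the cellular assumption is precisely what reduces the motivic statement to a combinatorial index-matching that can be verified directly from the definitions of $\theta$, $\delta$ and $N$ in Section~\ref{sec:toric-regulator}.
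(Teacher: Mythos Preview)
The paper does not argue this directly; it simply cites Consani~\cite{Con98}, namely her Lemma~3.1 together with her equation~(2.3), the latter resting on her Conjecture~2.1, which is known precisely for toric or cellular strata. Your first step --- the vanishing $CH^a(\Ymm m,b)_\Q=0$ for $b\ge 1$ via a cell decomposition and Quillen's $K_\ast(\bar F)_\Q=0$ in positive degree --- is exactly the instance of that conjecture being invoked, so that part is on target and matches what the paper (through Consani) uses.

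Your second step, however, contains a real confusion about the role of $N$. You set up a bicomplex with horizontal differentials the restrictions $\theta$ and vertical differentials the Gysin maps $\delta$, and then assert that after collapse the ``surviving vertical direction'' is Consani's $N$. But look back at how $N$ is introduced just after~\eqref{monodromy}: the map $N:C_j^{i,k}\to C_{j-1}^{i+2,k+1}$ is literally the \emph{identity} on the common summand $CH^{i+j-k}(\Ymm{2k-i+1})$; it is a pure reindexing, not a Gysin pushforward. Both $\theta$ and $\delta$ already sit \emph{inside} each complex $C_j^\bullet$, as the two pieces $d'$ and $d''$ of the single differential $d_j$, while $N$ goes between complexes with different~$j$. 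Consequently the cone $\Cons(r)=\Cone(N:C_r^{\bullet}\to C_{r-1}^{\bullet})$ carries a given stratum Chow group in general \emph{twice} (once in the source complex, once in the target), which a \v{C}ech--plus--localization bicomplex of the shape you describe does not. So your bicomplex does not totalize to $\Cons(r)$, and the identification with $CH^{r-1}(Y,2r-k-2)_\Q$ does not fall out of the mechanism you sketch. Consani's actual argument (her Lemma~3.1) builds a motivic Steenbrink-type double complex in which $N$ is the tautological shift of a filtration index, separate from the $\theta,\delta$ differentials; matching its cohomology with the higher Chow groups of the singular scheme $Y$ uses her specific combinatorics, and you would need to go through that rather than the plain descent-plus-localization collapse you propose.
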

\begin{proof}
  This is a combination of results in~\cite{Con98}: Lemma~3.1 (see
  also p. 341 where the lemma is used but without precise reference in
  our context) and equation~(2.3), which relies in turn on
  Conjecture~2.1, which is known for toric or cellular varieties.  Unfortunately, it is not known at present that this follows directly from the definition of totally degenerate, although it is expected to be true for any smooth projective variety over a finite field.
\end{proof}
\begin{definition}\label{sreekreg}
  The Sreekantan regulator is a map
  \begin{equation*}
    r_{\mathcal{D}}: \hmot(X,k+1,r) \to H_{\mathcal{D}}^{k+1}(X,\Q(r))
  \end{equation*}
  which is the composition of the boundary map (effectively in motivic
  homology)
  \begin{equation*}
    \hmot(X,k+1,r) \isom CH^r(X,2r-k-1)\xrightarrow{\partial}  CH^{r-1}(Y,2r-k-2)\otimes \Q\;,
  \end{equation*}
  with the isomorphism~\eqref{Consisom}.
\end{definition}

By Remark~\ref{inj}, unless $k+1=2r$, the map
$ T_{r}^{k+1-2r} \to T_{r-1}^{k+3-2r}$ is injective after tensoring
with $\Q$, giving an isomorphism
\begin{equation}\label{Tsrik}
  H_{\mathcal{D}}^{k+1}(X,\Q(r)) \isom \operatorname{coker} \left(T_{r}^{k-2r} \to T_{r-1}^{k+2-2r}\right)\otimes \Q \;.
\end{equation}
Thus, the following result is quite natural.
\begin{conjecture}\label{conjsreek}
  Assume $k+1 < 2r$. Then, with $ r_{\mathcal{D}}$ as in
  Definition~\ref{sreekreg}, diagram~\eqref{sreekcom} commutes.
\end{conjecture}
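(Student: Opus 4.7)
The plan is to factor both maps appearing in~\eqref{sreekcom} through the motivic boundary to the special fiber $Y$, using the $l$-adic realization as the bridge between higher Chow groups on $Y$ and the weight filtration on $M_l$. The central observation driving the comparison is that after composition with the valuation, the augmented monodromy $\ntl$ collapses to the plain monodromy $N$ on the $T$'s, which is precisely the corollary following the Kummer-valuation lemma. Both routes in~\eqref{sreekcom} should therefore extract the same ``monodromy component'' of the boundary $\partial\alpha \in CH^{r-1}(Y,2r-k-2)\otimes\Q$, so that the diagram commutes once we show that the étale construction of $\regtl$ arises from $\partial\alpha$ via cycle class followed by the augmented monodromy.

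First, I would establish the compatibility between the localization sequence for Bloch's higher Chow groups of the pair $(\XX,Y)$ and the localization sequence in $l$-adic étale cohomology. Using Ivorra's étale realization~\cite{Ivo07} together with the identification of the graded pieces $\Gr_i^W M_l$ with $T_{(k+i)/2}^{-i} \otimes \Zl(-(k+i)/2)$ supplied by Raskind-Xarles, this should produce a commutative diagram factoring $\reg_l$ through
\begin{equation*}
  \hmot(X,k+1,r) \xrightarrow{\partial} CH^{r-1}(Y,2r-k-2) \otimes \Q \to H_g^1(K, W_{2r-k}M_l(r)/W_{2r-k-4}M_l(r)) \otimes \Ql.
\end{equation*}
Under the cellularity hypothesis on the $\bar Y_I$ and Consani's Lemma~3.1, the second arrow is identified with the cycle class map, and its target is identified via~\eqref{Consisom} and~\eqref{Tsrik} with $\coker(T_r^{k-2r} \to T_{r-1}^{k+2-2r}) \otimes \Ql$.

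Assuming this factorization, commutativity of~\eqref{sreekcom} reduces to the identity $\val\circ\ntl = N$ tensored with $\Zl$, which is the content of the corollary after~\eqref{augl}. Explicitly, $\reg_l^{\prime\prime}(\alpha)$ is represented, up to the scalar $n_3$, by a class in $\coker(\ntl)$; applying valuation produces the $\Ql$-image of the Sreekantan class $r_{\mathcal{D}}(\alpha)$ in $\coker(N)\otimes\Ql$, as required. The Kummer-theoretic interpretation of the boundary from the long exact sequence of~\eqref{mlpdiag} is then the only bridge needed between the two descriptions.

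The principal obstacle is step one: making the compatibility of the motivic and étale boundary maps precise at the integral level, and tracking the multiplicative constants $n_0,n_1,n_2,n_3$ so that they do not obstruct the comparison after rationalization. At the prime $p$ this compatibility rests on the syntomic analysis of Section~\ref{sec:syntomic}, in particular Theorem~\ref{logissyn}, which plays the role of the Kummer-valuation lemma in the $p$-adic setting. A subsidiary but nontrivial point is the identification of the Bloch-Kato extension class of~\eqref{mlpdiag} with the geometric extension coming from the nearby cycles on $\XX$; this is exactly where Consani's comparison, and hence the cellularity hypothesis, enters crucially.
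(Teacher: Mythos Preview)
The statement you are attempting to prove is labeled in the paper as a \emph{conjecture}, not a theorem, and the paper does not supply a proof. Indeed, immediately after the main theorem in Section~\ref{sec:toric-regulator} the authors write: ``We hope to prove Conjecture~\ref{conjsreek} in future work.'' There is therefore no proof in the paper against which to compare your proposal.

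Your outline is a reasonable \emph{strategy} for attacking the conjecture, and the paper's authors presumably have something similar in mind: factor both sides of~\eqref{sreekcom} through the boundary map $\partial$ to higher Chow groups of $Y$, invoke compatibility of motivic and $l$-adic localization sequences via Ivorra's realization, and then reduce to the identity $\val\circ\ntl = N\otimes\Zl$. But you yourself correctly identify the genuine gap: the compatibility of the motivic and \'etale boundary maps with the Raskind--Xarles weight filtration, and the matching of the extension class in~\eqref{mlpdiag} with the geometric extension coming from nearby cycles, are not established in the paper and are exactly what remains to be done. Your proposal is thus a plan rather than a proof, and the obstacles you flag are real and unresolved in the paper as written.
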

As noted in Section~\ref{sec:toric-regulator}, the existence of the
toric regulator follows from this
conjecture.  In addition, we get the following obvious relation
between the toric regulator and the Sreekantan regulator: Define the
valuation map
\begin{equation}
  \label{eq:valmap}
  \val: \hT(X,k+1,r) \to  H_{\mathcal{D}}^{k+1}(X,\Q(r))
\end{equation}
as the composition of the valuation map and the
isomorphism~\eqref{Tsrik}.
\begin{corollary}
  Assuming Conjecture~\ref{conjsreek} the following diagram commutes.
  \begin{equation*}
    \xymatrix{
      \hmot(X,k+1,r) \ar[dr]^{ r_{\mathcal{D}}} \ar[r]^{\regt} & \hT(X,k+1,r)
      \ar[d]^{\val} \\
      &
      H_{\mathcal{D}}^{k+1}(X,\Q(r))\;.
    }
  \end{equation*}
\end{corollary}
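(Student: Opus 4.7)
The plan is to deduce commutativity of the corollary's triangle directly from Conjecture~\ref{conjsreek} by showing that both $\val\circ\regt$ and $r_{\mathcal{D}}$, after $l$-adic completion, become the two legs of the outer rectangle of diagram~\eqref{sreekcom}. The argument is essentially a diagram chase that unpacks the definitions of $\regt$ and $\val$, and there is no substantive geometric content beyond the conjecture itself.

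First I would unwind the definition of $\val: \hT(X,k+1,r) \to H_{\mathcal{D}}^{k+1}(X,\Q(r))$ given in~\eqref{eq:valmap}. By construction, it is the composition of the tautological quotient
\[
\coker\!\left( T_{r}^{k-2r} \xrightarrow{\nt} T_{r-1}^{k+2-2r}\otimes K^{\times} \right)
\longrightarrow
\coker\!\left(T_r^{k-2r} \to T_{r-1}^{k+2-2r}\right)\otimes \Q
\]
induced by the ordinary valuation on $K^{\times}$, with the isomorphism~\eqref{Tsrik}. In particular, after tensoring with $\Q_l$, this right-hand vertical edge of the corollary's triangle becomes exactly the right-hand vertical edge of~\eqref{sreekcom}.

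Second, I would use the construction of $\regt$ in the previous theorem, where it is pinned down as the unique map whose $l$-adic completion equals $\regtl$ for every prime $l$; the existence of such a map relied precisely on the pushout description in Lemma~\ref{keylemma}. Combining this with the previous paragraph, the $l$-adic completion of $\val\circ\regt$ coincides with the top-then-right composition in diagram~\eqref{sreekcom}. Conjecture~\ref{conjsreek} identifies this composition with $r_{\mathcal{D}}$ followed by the rationalization $H^{k+1}_{\mathcal{D}}(X,\Q(r)) \to H^{k+1}_{\mathcal{D}}(X,\Q(r))\otimes \Q_l$. Hence $\val\circ\regt$ and $r_{\mathcal{D}}$ agree after extension of scalars to $\Q_l$.

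Finally, since the $T_j^i$ are finitely generated and $H^{k+1}_{\mathcal{D}}(X,\Q(r))$ is a finite-dimensional $\Q$-vector space, the canonical map into its $\Q_l$-completion is injective, so equality after completion forces equality in $H^{k+1}_{\mathcal{D}}(X,\Q(r))$ itself. This gives commutativity of the corollary's triangle. The only subtlety, which is not really an obstacle, is to ensure that the three identifications involved — the pushout presentation of $\hT(X,k+1,r)$ underlying the definition of $\regt$, the valuation map, and the isomorphism~\eqref{Tsrik} — are taken with matching sign and indexing conventions, so that the $l$-completed composite $\val\circ\regt$ literally equals $\val\circ\regtl$ with no spurious automorphism intervening.
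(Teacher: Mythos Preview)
Your argument is correct and is exactly the unwinding the paper has in mind: the corollary is stated without proof, preceded only by the remark that it is the ``obvious relation'' between the toric and Sreekantan regulators, and your diagram chase via~\eqref{sreekcom}, the defining property of $\regt$, and injectivity of $\Q\hookrightarrow\Q_l$ is precisely that obvious step. One minor simplification: since $H_{\mathcal{D}}^{k+1}(X,\Q(r))$ is already a $\Q$-vector space, injectivity of $-\otimes_{\Q}\Q_l$ for a single prime $l$ already suffices, so you need not invoke all primes simultaneously.
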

\begin{remark}
  Note that Sreekantan does not deal with the case of cycles at
  all. If we try to argue by analogy, we should expect that in the
  case $k+1=2r$ the composed map
  $CH^r(X) \to H_{\mathcal{D}}^{2r}(X,\mathbb{Q}(r)) \to T_{r}^0$
  factors via the cycle class map and so the analogous Sreekantan
  regulator maps on homologically trivial cycles $CH^r(X)_0$ again to
  $\operatorname{coker} \left(T_{r}^{-1} \to T_{r-1}^{1}\right)$, but
  as this last map is (essentially) bijective, the corresponding
  regulator is trivial (unlike the toric regulator).
\end{remark}

\section{$K_2$ of curves and the rigid analytic regulator of P\'al}
\label{sec:pal}

In this section we assume that $X$ is of dimension $1$. We recall
from~\cite[Section~\ref{sec:globsemi}]{Bes17} the setup that will be
used here and also in some parts of Section~\ref{sec:syntomic}. By
assumption $X$ is the generic fiber of a proper $\O_K$ scheme $\XX$
with (Zariski) semi-stable reduction
\begin{equation}
  \label{eq:Yi}
  Y = \bigcup_i Y_i\;.
\end{equation}
In particular, locally near an intersection point $Y_i\cap Y_j$ there
are coordinates $x,y$ satisfying
\begin{equation}
  xy=\pi\;,\; Y_i=(x)\;,\;Y_j=(y) \label{eq:ss}
\end{equation}
(here, $(f)$ denotes the divisor of the rational function $f$).

For simplicity we will assume that components $Y_i$ and $Y_j$
intersect at at most one point.

Let $\Xgrph$ be the dual graph of $Y$ with vertices $V$ and edges $E$
(this is of course an abuse of notation as it really depends on the
particular model). The vertices correspond to the components $Y_v$
while the edges are ordered pairs of intersecting components
$(Y_v,Y_w)$ oriented from $v$ to $w$, so that an edge $e$ has tail
$e^+=v$ and head $e^-=w$. For such an edge we denote by $-e$ the same
edge with reverse orientation.

The reduction map $X\to Y$ allows us to split $X$ into rigid analytic
domains $U_v = \red^{-1} Y_v$ which are wide open spaces in the sense
of Coleman. These then intersect along annuli corresponding
bijectively to the unoriented edges of $\Xgrph$. Indeed, in terms of
the coordinates $x,y$ appearing in~\eqref{eq:ss} the annulus
corresponding to the edge $(Y_i,Y_j)$ gets mapped via $x$ (or $y$) to
the rigid analytic space $ A(|\pi|,1)$ with
\begin{equation}
  \label{eq:annulus}
  A(r,s) := \{z\in \Kbar\;,\; r<|z|<s\}\;.
\end{equation}
An orientation of an annulus fixes a sign for the residue along this
annulus and we match oriented edges with oriented annuli as
in~\cite[Definition~\ref{convention1}]{Bes17}. We use the same
notation for the edge and for the associated oriented annulus.

At this point we recall some facts about graph cohomology and harmonic
cochains on graphs. For a slightly expanded version the reader may
consult~\cite[Section~\ref{sec:globsemi}]{Bes17}. For a graph
$\Gamma=(V,E)$ and an abelian group $A$ we define $0$ and $1$ cochains
on $\Gamma$ with values in $A$ by
\begin{equation*}
  C^0(\Gamma,A) = \{f: V \to A\}\;,\;
  C^1(\Gamma,A) = \{f: E \to A\;,\; f(-e) =-f(e)\}\;.
\end{equation*}
We have a differential
\begin{equation*}
  d: C^0(\Gamma,A) \to  C^1(\Gamma,A),\; d f (e) = f(e^+) -f(e^-)\;
\end{equation*}
and the graph cohomology $H^1(\Gamma,A)$ is the cokernel of $d$. We
have a pointwise product of $c,d \in C^1(\Gamma,A)$, assuming that $A$
is a ring, defined by
\begin{equation*}
  c\cdot d = \sum_{e\in E(G)/\pm} c(e)\cdot d(e)\;,
\end{equation*}
where the sum is over unoriented edges. The kernel of the dual
differential \newcommand{\hh}{\mathcal{H}}
\begin{equation*}
  d^\ast: C^1(G,A) \to C^0(G,A),\;
  d^\ast f(v) = \sum_{e^+ = v} f(e)\;,
\end{equation*}
\newcommand{\inject}{\hookrightarrow}
\newcommand{\im}{\operatorname{Im}} 

\parindent=0cm is the space $ \hh(\Gamma,A) $ of
\emph{harmonic cochains} with values in $A$. The injection
$\hh(\Gamma,A) \inject C^1(\Gamma,A)$ induces a map
\begin{equation}
  \hh(\Gamma,A) \to  H^1(\Gamma,A).\label{eq:harmonicisog}
\end{equation}
which is an isomorphism if $A$ is a $\Q$-vector space. With $A$ a ring
again, the space $\hh(\Gamma,A)$ is orthogonal to $\im d$, hence
induces a pairing
\begin{equation}
  \label{eq:harmonicpair}
  \hh(\Gamma,A) \times  H^1(\Gamma,A) \to A\;.
\end{equation}

There is an unoriented version of the above, which will be needed for
what follows. This requires fixing an orientation of each edge. Then
we can redefine $1$-cochains with values in $A$ as functions from
unoriented edges to $A$. The differential and the dual differential
are defined as above but using the fixed orientation on each edge and
the pointwise product is unchanged. It is trivial that this
construction produced isomorphic graph cohomologies.
\begin{proposition}
  Let $X$ be as above, with dual graph $\Gamma$. We have isomorphisms
  \begin{align*}
    T_1^{-1} &\isom \hh(\Gamma,\Z)\;, \\
    T_0^1 &\isom H^1(\Gamma,\Z)\;.
  \end{align*}
  With respect to these isomorphisms the monodromy map
  $N: T_1^{-1} \to T_0^1 $ corresponds to the
  map~\eqref{eq:harmonicisog} and the product
  $T_1^{-1} \times T_0^1 \to \Z$ to the paring~\eqref{eq:harmonicpair}
  induced by the pointwise product.
\end{proposition}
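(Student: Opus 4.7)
The plan is to identify the complexes $C_1^\bullet$ and $C_0^\bullet$ in the relevant range of degrees directly with graph-cochain complexes. The hypotheses give a very simple geometric picture: in dimension $1$ with totally degenerate reduction, $\bar Y_v\isom\PP^1$ (so $CH^1(\bar Y_v)\isom\Z$ via degree), $\Ymm{2}$ is a disjoint union of reduced points in bijection with the unoriented edges of $\Gamma$ (each pair of intersecting components meeting in exactly one point), and $\Ymm{m}=\emptyset$ for $m\ge 3$. Fix a total order on the vertex set, so that each intersection point is uniquely labelled $p_{ij}$ with $i<j$; identify it with the oriented edge $e$ having tail $e^+=i$ and head $e^-=j$. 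This gives identifications $CH^0(\Ymm{2})\isom C^1(\Gamma,\Z)$, $CH^0(\Ymm{1})\isom C^0(\Gamma,\Z)$, and $CH^1(\Ymm{1})\isom C^0(\Gamma,\Z)$ via degree.

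For $T_1^{-1}$, the vanishings collapse $C_1^\bullet$ so that $T_1^{-1}=\ker\bigl(CH^0(\Ymm{2})\to CH^1(\Ymm{1})\bigr)$: $C_1^{-2}=0$, $C_1^{-1,k}=0$ for $k\ge 1$, and in degree $0$ only $C_1^{0,0}$ is nonzero. The surviving piece of the differential is the Gysin push-forward $\delta_{0,1}=-\rho_{1\ast}+\rho_{2\ast}$, which sends $[p_{ij}]$ to $[p_{ij}]_{Y_i}-[p_{ij}]_{Y_j}=1_{e^+}-1_{e^-}$ in $C^0(\Gamma,\Z)$; this is precisely the graph codifferential $d^\ast$. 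Hence $T_1^{-1}=\ker d^\ast = \hh(\Gamma,\Z)$. For $T_0^1$ the same vanishings give $C_0^0=CH^0(\Ymm{1})$, $C_0^1=CH^0(\Ymm{2})$, $C_0^2=0$, so $T_0^1$ is the cokernel of the surviving differential $\theta_{0,1}=\rho_1^\ast-\rho_2^\ast$; on $1_v$ this returns $\pm 1$ on each edge incident to $v$, with the sign determined by whether $v$ is the head or tail, which is $\pm d$ for the graph differential $d: C^0\to C^1$. The image is unchanged by an overall sign, so $T_0^1 = C^1(\Gamma,\Z)/\im(d)=H^1(\Gamma,\Z)$.

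The monodromy map $N: T_1^{-1}\to T_0^1$ is, by definition, the identity on the common summand $C_1^{-1,0}=CH^0(\Ymm{2})=C_0^{1,1}$; under our identifications this is precisely the composition $\hh(\Gamma,\Z)\hookrightarrow C^1(\Gamma,\Z)\twoheadrightarrow H^1(\Gamma,\Z)$, i.e.\ \eqref{eq:harmonicisog}. The Raskind--Xarles pairing on $T_1^{-1}\times T_0^1$ is induced by the intersection pairing on $CH^0(\Ymm{2})\times CH^0(\Ymm{2})$, which for a zero-dimensional scheme is simply the coefficient-wise product summed over the points; this transports to $\sum_{e\in E/\pm} c(e)c'(e)$, the pointwise product of \eqref{eq:harmonicpair}. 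The principal obstacle is combinatorial sign-tracking: reconciling the $(-1)^r$ factors in $\theta_{i,m}$ and $\delta_{i,m}$ with the chosen vertex order and with the graph convention $f(-e)=-f(e)$. Once that bookkeeping is in place, the only substantive geometric input is $\mathrm{Pic}(\PP^1)\isom\Z$.
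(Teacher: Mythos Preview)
Your proof is correct and follows essentially the same route as the paper's: both unwind the definitions of $C_j^{i,k}$ to see that only $CH^0(\Ymm{1})$, $CH^1(\Ymm{1})$, and $CH^0(\Ymm{2})$ survive, identify these with $C^0(\Gamma,\Z)$ and $C^1(\Gamma,\Z)$ via degree maps (using that each component is $\PP^1$), and then match the Gysin/pullback differentials $\delta_{0,1}$ and $\theta_{0,1}$ with $d^\ast$ and $\pm d$ respectively. Your treatment of the signs and of the unoriented convention is in fact a bit more explicit than the paper's, which simply says the differentials are ``alternating sums of pushforwards/pullbacks which are the identity on the appropriate $\Z$ summands,'' but the underlying argument is identical.
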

\begin{proof}
  We have
  \begin{equation}
    \begin{split}
      \ctt{1}{-1}&= \bigoplus_{k\ge 0} \ct{1}{-1}{k} =  \bigoplus_{k\ge 0} \Cy{-k}{2k+2} = \Cy{0}{2}\;, \\
      \ctt{1}{0}&= \bigoplus_{k\ge 0} \ct{1}{0}{k} =  \bigoplus_{k\ge 0} \Cy{1-k}{2k+1} = \Cy{1}{1}\;, \\
      \ctt{1}{-2}&= \bigoplus_{k\ge 0} \ct{1}{-2}{k} =  \bigoplus_{k\ge 0} \Cy{-1-k}{2k+3} = 0\;, \\
      \ctt{0}{1}&= \bigoplus_{k\ge 1} \ct{0}{1}{k} =  \bigoplus_{k\ge 1} \Cy{1-k}{2k} = \Cy{0}{2}\;, \\
      \ctt{0}{2}&= \bigoplus_{k\ge 2} \ct{0}{2}{k} =  \bigoplus_{k\ge 2} \Cy{2-k}{2k-1} = 0\;, \\
      \ctt{0}{0}&= \bigoplus_{k\ge 0} \ct{0}{0}{k} = \bigoplus_{k\ge
        0} \Cy{-k}{2k+1} = \Cy{0}{1}\;.
    \end{split}\label{Tisom}
  \end{equation}
  As all the components of $\Ymm{1}$ are projective lines, their
  $CH^1$'s, as well as the $CH^0$ of the components of $\Ymm{2}$ are
  isomorphic to $\Z$ via the degree map, and the differential
  $\ctt{1}{-1} \to \ctt{1}{0}$ is the alternating sum of pushforward
  maps, which are clearly just the identity map on the appropriate
  $\Z$ summands. The chosen numbering of the components gives an
  orientation on all the edges. The unoriented edge $\{i,j\}$ gets the
  orientation $(i,j)$ with $i<j$. With this the isomorphisms~\eqref{Tisom} are clear in the unoriented version. Similarly, The
  $CH^0$ of each component of $\Ymm{1}$ is isomorphic to $\Z$ and the
  map $\ctt{0}{0} \to \ctt{0}{1} $ is an alternating sum of pullbacks,
  which are again the identities on the corresponding $\Z$ components,
  giving the identification of the monodromy operator, again in the
  unoriented version. The identification of the pairing is clear.
\end{proof}
\begin{corollary}
  We have $\hT(X,2,2)\isom \hh(\Gamma,K^\times) $ and the toric
  regulator in this case is therefore a map
  \begin{equation}
    \label{eq:toric2}
    \regt: \hmot(X,2,2) \to \hh(\Gamma,K^\times)\;.
  \end{equation}
\end{corollary}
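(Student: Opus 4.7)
The statement is a direct unpacking of the definition of $\hT(X,k+1,r)$ in the case $k+1=r=2$, combined with the isomorphisms $T_1^{-1}\isom \hh(\Gamma,\Z)$ established in the preceding proposition. My plan is to check that the source of the augmented monodromy map vanishes in this range, and then to promote the $\Z$-coefficient identification of $T_1^{-1}$ to a $K^\times$-coefficient identification.

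First I would substitute $k=1$, $r=2$ into the definition of the higher toric intermediate Jacobian:
\[
  \hT(X,2,2) = \coker\Bigl( T_{2}^{-3} \xrightarrow{\nt} T_{1}^{-1}\otimes K^{\times} \Bigr).
\]
Then I would show $T_2^{-3}=0$. Using $\ct{j}{i}{k}=CH^{i+j-k}(\Ymm{2k-i+1})$ with $k\ge\max(0,-3)=0$, one gets
\[
  \ct{2}{-3}{k} = CH^{-1-k}(\Ymm{2k+4}),
\]
which vanishes for all $k\ge 0$ since the codimension is negative. Hence $C_2^{-3}=0$ and a fortiori $T_2^{-3}=H^{-3}(C_2^{\bullet})=0$, so the cokernel collapses to $T_1^{-1}\otimes K^\times$.

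Next I would invoke the previous proposition to get $T_1^{-1}\otimes K^\times \isom \hh(\Gamma,\Z)\otimes K^\times$, and then identify $\hh(\Gamma,\Z)\otimes K^\times$ with $\hh(\Gamma,K^\times)$. The key point here is that the inclusion $\hh(\Gamma,\Z)\hookrightarrow C^1(\Gamma,\Z)$ has torsion-free cokernel: indeed the quotient embeds into $C^0(\Gamma,\Z)$ via $d^\ast$, hence is a subgroup of a finitely generated free abelian group. Tensoring the short exact sequence $0\to \hh(\Gamma,\Z)\to C^1(\Gamma,\Z)\xrightarrow{d^\ast}\im d^\ast\to 0$ with $K^\times$ remains exact, and since $C^i(\Gamma,\Z)\otimes K^\times = C^i(\Gamma,K^\times)$ by the finite-freeness of $C^i(\Gamma,\Z)$, the kernel of $d^\ast$ on $K^\times$-valued cochains is exactly $\hh(\Gamma,\Z)\otimes K^\times$.

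Assembling, $\hT(X,2,2)\isom T_1^{-1}\otimes K^\times \isom \hh(\Gamma,\Z)\otimes K^\times \isom \hh(\Gamma,K^\times)$, and composing the toric regulator of the previous section with this identification produces the map \eqref{eq:toric2}. No real obstacle arises: the only nontrivial input beyond the previous proposition is the elementary fact that $\hh(\Gamma,\Z)$ is a direct summand of $C^1(\Gamma,\Z)$, which is what makes harmonic cochains compatible with change of coefficients.
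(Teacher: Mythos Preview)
Your proof is correct and is precisely the detailed unpacking the paper leaves implicit: the paper states this corollary with no proof, treating it as immediate from the preceding proposition and the definition of $\hT(X,k+1,r)$. Your explicit verification that $T_2^{-3}=0$ and that $\hh(\Gamma,\Z)\otimes K^\times\isom\hh(\Gamma,K^\times)$ (via torsion-freeness of $C^1(\Gamma,\Z)/\hh(\Gamma,\Z)$) are exactly the points a careful reader would need to supply.
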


We now recall the (somewhat reformulated) definition of the P\'al
rigid analytic regulator~\cite{Pal10}. We start by working over $\Cp$
and with closed annuli $A[r,s]$ instead of open
ones~\eqref{eq:annulus}.
\begin{definition}
  View the annulus $e=A[r,s]$ as embedded in $\PP^1$ in the obvious
  way, and let $D=D(r)$ be the disc $\{|z|< r\}$ inside $\PP^1$. For
  rational functions $f,g$ on $\PP^1$ which have no poles or zeros on
  $e$, set
  \begin{equation*}
    t_e(f,g) = \prod_{x\in D} t_x(f,g)\in \Cp^\times \;,
  \end{equation*}
  where $t_x$ is the tame symbol at the point $x$.  Let $f,g$ be
  invertible rigid analytic functions on $A[r,s]$. Let $f_n$ and $g_n$
  be sequences of rational functions on $\PP^1$ that converge to $f$
  and $g$ respectively on $A[r,s]$. Then set
  \begin{equation*}
    t_e(f,g) = \lim_{n\to \infty} t_e(f_n,g_n)\in \Cp\;.
  \end{equation*}
  For an open annulus $e=A(r,s)$ define $t_e$ to be $t_{e'}$ for any
  smaller closed annulus.  Finally, for an oriented (open) annulus $e$
  and rigid analytic functions $f,g$ on $e$, define $t_e(f,g)$ by
  choosing an orientation preserving identification of $e$ with
  $A(r,s)$.
\end{definition}
\begin{theorem}\label{palthm}
  The quantity $t_e(f,g)$ is well defined and in
  $\Cp^\times$. Furthermore we have the following:
  \begin{enumerate}
  \item $t_{-e}(f,g) = t_e(f,g)^{-1}$.\label{cond1}
  \item $t_e(f,1-f) = 1$\label{cond2}
  \item Let $U$ be the complement in $\PP^1(\Cp)$ of the union of a
    finite number of disjoint closed balls $D[r_i]$ and let $e_i$ be
    annuli $A(r_i,r_i+\varepsilon)$ where $\varepsilon$ is chosen
    sufficiently small so that the $e_i$'s are disjoint. Let $f,g$ be
    invertible meromorphic functions on $U$ which are invertible on
    the $e_i$. Then the following residue theorem holds:
    \begin{equation*}
      \prod_{x\in U} t_x(f,g) \cdot  \prod_i t_{e_i}(f,g) = 1 \;.
    \end{equation*}
  \end{enumerate}
\end{theorem}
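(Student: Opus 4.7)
For rational $f,g$ without zeros or poles on the closed annulus $e = A[r,s]$, the set $D(r)\cap\{\text{zeros and poles of }fg\}$ is finite and each $t_x(f,g)$ lies in $\Cp^\times$, so the product $t_e(f,g)=\prod_{x\in D(r)}t_x(f,g)$ is defined and nonzero. To extend to rigid analytic invertible $f,g$, I would establish that $t_e(f,g)$ admits an explicit leading-term formula: if $f$ and $g$ have Laurent expansions on any circle $|z|=\rho$ inside $e$ of the form $f = a z^n(1 + \text{smaller terms})$, $g = b z^m(1 + \text{smaller terms})$, then $t_e(f,g) = (-1)^{nm}a^m/b^n$. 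The verification for rational $f,g$ uses the classical reciprocity law $\prod_{x\in\PP^1}t_x(f,g)=1$ combined with a direct computation at $\infty$ after a change of variable. Since this formula only involves the leading coefficient and the annular order $v_e$, it is continuous with respect to uniform convergence of $f,g$ on any closed sub-annulus, which simultaneously gives convergence of $t_e(f_n,g_n)$, independence of the approximating sequence, independence of the chosen closed sub-annulus of an open annulus, and the conclusion that $t_e(f,g)\in\Cp^\times$.

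\textbf{Step 2: properties (1) and (2).} For (1), reversing orientation of $e$ corresponds, after an orientation-preserving identification with $A(r',s')$, to applying $z\mapsto c/z$, which swaps $D(r)$ with $\PP^1\setminus D[s]$. For rational $f,g$ invertible on $e$, classical reciprocity on $\PP^1$ gives
\begin{equation*}
  \Bigl(\prod_{x\in D(r)}t_x(f,g)\Bigr) \cdot \Bigl(\prod_{x\in \PP^1\setminus D[s]}t_x(f,g)\Bigr) = 1,
\end{equation*}
because the annulus contributes nothing by hypothesis. This is exactly $t_e(f,g)\cdot t_{-e}(f,g)=1$, and the identity extends to rigid analytic $f,g$ by continuity of the leading-term formula. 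Property (2) follows from the Steinberg identity $t_x(f,1-f)=1$ for each point, so $t_e(f,1-f)=1$ for rational $f$, and by the leading-term description, if $f = a z^n + \cdots$, then $1-f$ has determined leading data and the formula again yields $1$; passage to the limit extends to rigid analytic $f$.

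\textbf{Step 3: the residue theorem (3).} This is the technical heart. For rational $f,g$ invertible on all $e_i$ and meromorphic on $U = \PP^1 \setminus \bigcup D[r_i]$, classical reciprocity on $\PP^1$ decomposes as
\begin{equation*}
  \prod_{x\in U}t_x(f,g) \cdot \prod_i \prod_{x\in D[r_i]}t_x(f,g) \cdot \prod_i \prod_{x\in e_i}t_x(f,g)=1,
\end{equation*}
where the annular factors are $1$ by invertibility, and each inner disc product is precisely $t_{e_i}(f,g)$ by definition, giving the claim. To handle rigid analytic $f,g$, I would use a Mittag-Leffler type factorization adapted to the cover $\PP^1 = U \cup \bigcup D[r_i + \varepsilon']$: write $f = f_{\rm rat} \cdot u$, where $f_{\rm rat}$ is a rational function having the same zeros and poles in $U$ as $f$ (so $u$ is an invertible rigid analytic function on a neighborhood of $\overline{U}$, in particular on every $e_i$), and similarly for $g$. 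Then reciprocity applied to $f_{\rm rat}, g_{\rm rat}$ handles the zero/pole contributions, and the remaining task is to show that the unit part contributes trivially: by multiplicativity of tame symbols and the leading-term formula, the unit contribution to $\prod_i t_{e_i}(\cdot,\cdot)$ can be evaluated globally and shown to cancel using the fact that a unit on a neighborhood of $\overline{U}$ extends (Mittag-Leffler direction) uniquely up to rational factors already accounted for.

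\textbf{Main obstacle.} The hard step is the unit-part bookkeeping in (3): controlling how $t_{e_i}$ behaves under the Mittag-Leffler decomposition and showing that contributions from rigid units cancel globally across the boundary annuli. Concretely, one must prove a lemma asserting that for an invertible rigid function $u$ on a neighborhood of $\overline{U}$ and rational $g$, $\prod_i t_{e_i}(u,g)=1$, which is the genuine rigid analytic residue theorem underlying the statement. Once this is in hand, the rest of property (3) is formal manipulation.
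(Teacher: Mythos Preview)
Your approach differs fundamentally from the paper's: the paper gives no independent argument at all, instead identifying each claim with a specific result in P\'al's paper~\cite{Pal10} (well-definedness from the proof of Theorem~2.2 there, independence of the sub-annulus from Lemma~3.4, independence of parameterization from Theorem~3.11, property~(1) from Theorem~3.2~(iii), property~(2) from Theorem~2.2~(iv), and the residue theorem again from Theorem~3.2~(iii)). You instead attempt a self-contained proof via the leading-term formula $t_e(f,g)=(-1)^{nm}a^m/b^n$. This formula is correct and does make well-definedness, continuity, and properties~(1)--(2) transparent; it is arguably a cleaner organizing principle than tracking P\'al's sequence of lemmas.

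There is, however, a genuine error in your Step~3. The lemma you isolate --- that $\prod_i t_{e_i}(u,g)=1$ for $u$ invertible on a neighborhood of $\overline{U}$ and $g$ rational --- is false as stated. If $g$ has a zero or pole at $x\in U$, then $t_x(u,g)=u(x)^{v_x(g)}\neq 1$ in general, and the residue theorem forces $\prod_i t_{e_i}(u,g)=\prod_{x\in U}u(x)^{-v_x(g)}$, not~$1$. What you actually need is the full residue identity $\prod_{x\in U}t_x(u,g)\cdot\prod_i t_{e_i}(u,g)=1$ for such pairs. This \emph{does} follow from your Mittag-Leffler strategy: approximate $u$ uniformly on $\overline{U}$ by rational $u_n$ with all zeros and poles inside the removed discs; then the rational residue theorem applies to each $(u_n,g)$, the point-contributions $\prod_{x\in U}u_n(x)^{v_x(g)}$ converge (finite product), and the annular contributions converge by your leading-term continuity. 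The same argument, approximating both factors, handles $(u,v)$. So your outline is salvageable, but the key lemma must be restated as the full residue theorem for pairs with one rigid unit, not as a vanishing of the annular product alone.
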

\begin{proof}
  For the closed disc $A[r,s]$ what we defined here is what P\'al
  defines, in the course of proving~\cite[Theorem~2.2]{Pal10}, as $t_D$
  for the boundary component $D(r) $ of the connected rational
  subdomain $A[r,s]$. In particular, it is well defined. Lemma~3.4
  in~\cite{Pal10} shows that the choice of a closed annulus inside an
  open annulus does not matter and P\'al also
  shows~\cite[Theorem~3.11]{Pal10} that the construction commutes with
  morphisms of domains, which shows that it is independent of the
  choice of parameterization. Thus, the index is well defined.
  Switching the orientation corresponds to using $D=\{s<|z|\} $
  and~\cite[Theorem~3.2 (iii)]{Pal10} immediately gives~\eqref{cond1}
  (P\'al works here already with elements of $K_2$ but the result
  certainly specializes to what we have here). The formula in
  Theorem~2.2 (iv) there immediately implies~\eqref{cond2} and the
  residue Theorem is easily deduced from (iii) of Theorem~3.2.
\end{proof}
\begin{lemma}\label{paldeg}
  Let $\deg_e: \O(e)^\times \to \Z$ be defined by
  $\deg_e f = \res_e \dlog f$. Then, for a constant $c\in \Cp^\times$,
  $t_e(c,f) = c^{\deg_e(f)}$.
\end{lemma}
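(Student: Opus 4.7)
The plan is to prove the formula first for rational functions $f$ by explicit computation, and then pass to the limit using the defining sequence of rational approximations. After choosing an orientation-preserving identification, I may assume $e = A(r,s)$ with standard coordinate $z$ and $D = \{|z|<r\}$.

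First suppose $f$ is a rational function on $\PP^1$ with no zeros or poles on $A[r,s]$. Since $v_x(c)=0$ at every point $x\in D$, the tame symbol formula collapses to $t_x(c,f) = c^{v_x(f)}$, and multiplying over $x\in D$ gives
\begin{equation*}
  t_e(c,f) = c^{\sum_{x\in D} v_x(f)}.
\end{equation*}
To identify $\sum_{x\in D} v_x(f)$ with $\res_e \dlog f$, I would factor $f = \lambda\prod_i(z-a_i)^{n_i}$ so that $\dlog f = \sum_i n_i\, \frac{dz}{z-a_i}$, and compute by Laurent expansion on $A(r,s)$: the residue of $\frac{dz}{z-a_i}$ is $1$ when $|a_i|<r$ (i.e.\ $a_i\in D$) and $0$ when $|a_i|>s$. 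Summing yields $\res_e \dlog f = \sum_{a_i\in D} n_i = \sum_{x\in D} v_x(f) = \deg_e f$, which establishes the lemma for rational $f$.

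For a general invertible $f\in \O(e)^\times$, shrink to a closed subannulus $e' = A[r',s']\subset e$ on which $|f|$ is bounded below, and pick rational $f_n$ converging to $f$ uniformly on $e'$. By the lower bound on $|f|$, $f_n$ is invertible on $e'$ for $n$ large, so the rational case applies and yields $t_{e'}(c,f_n) = c^{\deg_{e'} f_n}$. Passing to the limit, the left-hand side tends to $t_e(c,f)$ by definition (and by the stated equality $t_e = t_{e'}$ for closed subannuli); on the right, uniform convergence of $f_n$ together with the lower bound on $|f|$ gives uniform convergence of $\dlog f_n = (f'_n/f_n)\,dz$ to $\dlog f$, so the integer exponents $\deg_{e'} f_n$ stabilize to $\deg_{e'} f = \deg_e f$, as required.

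The only substantive step is the residue identification in the rational case, which is the rigid-analytic analogue of the classical argument principle and is handled by direct Laurent expansion. The passage to the limit is then routine, the one point requiring care being that invertibility of $f$ on the closed subannulus supplies the uniform lower bound on $|f|$ that makes $f'_n/f_n \to f'/f$ uniformly.
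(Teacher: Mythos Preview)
Your proof is correct and follows essentially the same approach as the paper's own proof: establish the formula for rational $f$ by direct computation of the tame symbols (the paper simply cites P\'al for this step, while you spell out the argument-principle identification $\sum_{x\in D} v_x(f) = \res_e \dlog f$), and then pass to general $f$ by continuity. Your version is more detailed and self-contained, but the strategy is identical.
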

\begin{proof}
  This holds for rational functions $f$ with a degree function
  equaling the number of zeros in the disc $D(r)$ minus the number of
  poles, which is clearly the same as our degree, by~\cite{Pal10}. It
  then follows in general by continuity.
\end{proof}
To define the P\'al regulator we first consider functions
$f,g\in \Cp(X)^\times$ having no poles or zeros on any $e\in E$. We
can then define $\reg_P(\{f,g\})\in C^1(\Gamma,\Cp^\times) $ by
\begin{equation*}
  \reg_P(\{f,g\})(e)= t_e(f,g)\;.
\end{equation*}
Next, if $\alpha=\sum \{f_i,g_i\}$ is a formal combination of symbols
and all the functions $f_i,g_i$ are invertible on all $e\in E$, and
all its tame symbols are $1$, then
\begin{equation*}
  \reg_p(\alpha) = \prod \reg_P(\{f_i,g_i\}) 
\end{equation*}
is defined and the residue theorem implies that it is in
$\hh(\Gamma,\Cp) $. Suppose now that all the $f_i,g_i$ are in
$K(X)^\times$ and all tame symbols are $1$, but without assuming they
are invertible on the $e$'s. By making a finite field extension we can
make sure that all points where any of these functions have values in
$\{0,1,\infty\}$ are defined over $K$ and, maintaining a semi-stable
model by possibly blowing up, we get for the new graph $\Gamma'$
that none of these points is in any of $e\in E'$ and we obtain
$ \reg_p(\alpha)\in \hh(\Gamma',\Cp^\times)$. Now, \eqref{cond2} of
Theorem~\ref{palthm} shows that $\reg_P$ factors via $K_2(K(X))$. Note
that $\Gamma'$ is obtained from $\Gamma$ by subdividing edges and it
is easily seen that
$ \hh(\Gamma',\Cp^\times)\isom \hh(\Gamma,\Cp^\times)$. Finally,
\cite[Proposition~4.2]{Pal10} shows that, for functions in $K(X)$,
$\reg_P$ in fact takes values in some finite extension and compatibility with
Galois actions shows that in fact takes values in $K^\times$. To
summarize, we get a map
\begin{equation*}
  \reg_P:K_2(X)\to \hh(\Gamma,K^\times)\;.
\end{equation*}
\begin{theorem}
  The map $\reg_P$ equals the map $\regt$ from~\eqref{eq:toric2}%
  except possibly at the prime $p$. In other words, $\reg_P$ gives
  $\regtl$ upon completion at $l$ for all primes $l\ne p$
  (conjecturally also for $l=p$).
\end{theorem}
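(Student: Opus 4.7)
The plan is to verify, for every prime $l \ne p$, that the $l$-completion of $\reg_P$ agrees with $\regtl$; this is precisely the content of the theorem. Fix such an $l$.

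First I would unwind the definition of $\regtl$ in the curve case $k=1$, $r=2$. Here one computes $T_2^{-3} = 0$, so the extension~\eqref{mlpdiag} degenerates and $M_l'$ is isogenous to $T_1^{-1} \otimes \Zl(1) \isom \hh(\Gamma, \Zl(1))$, the deepest nontrivial weight piece of $M_l(2) = H_{\et}^1(X \otimes_K \bar K, \Zl)(2)$. Corollary~\ref{nthreeneed} then identifies $H_g^1(K, M_l')$ with $\hh(\Gamma, K^{\times(l)})$ via edgewise Kummer theory, so $\regtl$ becomes the composition of the \'etale regulator $\reg_l$, projection onto this weight-graded piece, and the Kummer isomorphism.

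Next I would reduce to a single edge. Since $K_2(X)$ is generated (after a finite base change and the blow-ups used in defining $\reg_P$) by symbols $\{f,g\}$ with $f, g \in K(X)^\times$ invertible along every annulus $e$, it suffices to compare the two maps on such symbols. Both sides are bilinear, satisfy the Steinberg relation, and factor through $K_2(K(X))$; on each edge $e$ they produce an element of $K^{\times(l)}$ depending only on the restriction of the symbol to a formal neighborhood of the corresponding node.

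The main obstacle is the edgewise identification. I would argue locally: the semi-stable model near the node is \'etale equivalent to $\operatorname{Spec} R[x,y]/(xy - \pi)$, and the weight-graded piece of $M_l$ supported at this node realizes the Tate-curve extension $0 \to \Zl(1) \to E_l \to \Zl \to 0$. The composite $K_2(\text{annulus}) \to H_{\et}^2(-, \Zl(2)) \to H^1(K, \Zl(1)) \isom K^{\times(l)}$ is then the tame symbol, by Tate's theorem on $K_2$ of a Henselian discretely-valued field. On the P\'al side, $t_e$ is characterized by bilinearity, skew-symmetry (Theorem~\ref{palthm}(\ref{cond1})), the Steinberg relation (Theorem~\ref{palthm}(\ref{cond2})), continuity under rational approximation, and the normalization $t_e(c, f) = c^{\deg_e f}$ of Lemma~\ref{paldeg}; the \'etale local symbol satisfies each of these, so the two coincide after $l$-completion. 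Global harmonicity of both cochains in $\hh(\Gamma, -)$ then follows from Weil reciprocity---Theorem~\ref{palthm}(3) for $\reg_P$ and the analogous \'etale residue sequence for $\regtl$---yielding $\reg_P \otimes \Zl = \regtl$ for all $l \ne p$. I expect the most delicate step to be this local Tate-symbol identification, since matching P\'al's limit-of-rational-functions construction of $t_e$ with the abstract \'etale local symbol calls for a careful analysis over the completed local ring, together with a descent from $\Cp$ back to $K$.
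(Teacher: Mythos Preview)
Your global strategy---unwind $\regtl$ in the curve case, reduce to individual edges, and compare edgewise---matches the paper's. The gap is in the local identification, which is where essentially all the content of the paper's proof lies.

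The invocation of ``Tate's theorem on $K_2$ of a Henselian discretely-valued field'' does not apply here: the annulus $e$ is not the spectrum of a discrete valuation ring, and the completed local ring at a node is the two-dimensional $R[[x,y]]/(xy-\pi)$, not a DVR. There is no single ``tame symbol'' attached to the node; P\'al's $t_e$ is a \emph{product} of tame symbols $t_x$ over all closed points $x$ in the disc $D(r)$, and the \'etale side must be matched against that. Your axiomatic sketch also does not yield a proof: you have not argued that bilinearity, Steinberg, skew-symmetry, continuity, and the normalization $t_e(c,f)=c^{\deg_e f}$ uniquely determine a homomorphism on $K_2(\O(e))$, nor have you explained why the \'etale map is continuous under rational approximation in the sense needed.

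What the paper actually does for this step is quite different and more explicit. First (Proposition~\ref{jacmap}) it identifies the projection $\het^1(X\otimes\Kbar,\Zl(1)) \to T_1^{-1}\otimes\Zl$ concretely, via Saito's analysis of nearby cycles on the semi-stable model: one shows that on a local neighbourhood $xy=\pi$ the map sends the Kummer class of a function $g$ to $\res_e\dlog(g)$. This is the step you are glossing over with the phrase ``realizes the Tate-curve extension,'' and it is not formal---it requires tracing through Saito's isomorphisms $\theta'$ and the exact sequence~\eqref{saitshort} relating $\mathbb{R}^k\Psi$ and $i^\ast\mathbb{R}^{k+1}j_\ast$. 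Second, the paper restricts to the annulus, passes to $K_2(A)/l^n$ with $A=\O(e)$, and invokes a lemma of Asakura which says that for each closed point $x$ the composite $K_2(B)/l^n \to H^1(K,\mu_{l^n}) \to K^\times/(K^\times)^{l^n}$ through $\res_x$ is exactly the tame symbol $t_x$ modulo $l^n$-th powers. Summing over $x\in D$ recovers P\'al's $t_e$ on rational functions, and a direct argument that close enough approximants have the same image in $K_2(A)/l^n$ gives the general case. Your proposal correctly flags the local step as the delicate one but does not supply the mechanism---nearby cycles plus Asakura's tame-symbol lemma---that makes it go through.
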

\begin{proof}
  The proof is inspired by the work of Asakura~\cite[page
  279]{Asak06}. The toric regulator completed at a prime $l$ is the
  map
  \begin{equation*}
    \hmot(X,2,2) \to H^1(K,\het^1(X\otimes_K \Kbar,\Zl(2))) \to  H^1(K,T_1^{-1}\otimes \Zl(1)) \to T_1^{-1} \otimes K^{\times(l)} 
  \end{equation*}
  and so we first need to understand the map
  $\het^1(X\otimes_K \Kbar,\Zl(1)) \to T_1^{-1}\otimes \Zl $ from
  which the second left most map above is derived by twisting once and
  taking Galois cohomology.
  \begin{proposition}\label{jacmap}
    Suppose $l\ne p$.  Let $J$ be the Jacobian of $X$.  The map
    $$\het^1(X\otimes_K \Kbar,\Zl(1)) \isom T_l(J) \to T_1^{-1}\otimes
    \Zl $$ is the limit of the maps
    \begin{equation*}
      J[l^n] \to C^1(\Gamma,\Z/l^n)
    \end{equation*}
    defined as follows: let $[D]\in J[l^n]$ by the class of a divisor
    $D$. Then $l^n D$ is the divisor of a function $f$ and the map
    sends $[D]$ to $(e\mapsto \res_e(\dlog(f)) \pmod{l^n}) $.
  \end{proposition}
  \begin{proof}
    We begin by observing that in the map described here, we could just
    as well replace $[D]$ by a torsion class in
    $\operatorname{Pic}(X/\Kbar)$, represented by some \v{C}ech
    cocycle, which is then the boundary of $(g_K)$ and map to the
    residue of $\dlog g_k$ on an annulus, as this is independent of
    $k$ modulo $n$.

    The map we need is
    \begin{equation*}
      \het^1(X\otimes_K \Kbar,\Z/l^n(1)) = \het^1(Y, \mathbb{R}\Psi \Z/l^n(1)) \to \het^1(Y, \mathbb{R}^1\Psi \Z/l^n(1)[1])\;,
    \end{equation*}
    with $\mathbb{R}^1\Psi$ the functor of ``nearby cycles'', followed
    by the identification \newcommand{\Ybar}{\bar{Y}}
    \begin{equation*}
      \mathbb{R}^1\Psi \Z/l^n(1) \isom a_{2\ast}  \Z/l^n \;,
    \end{equation*}
    where, for any $k$, $a_k: \Ymm{k} \inject \Ybar$ is the obvious
    injection, and so we attempt to understand this last
    identification following~\cite[Proposition~1.2]{Sait03}. We have
    morphisms $X\xrightarrow{j} \mathcal{X}\xleftarrow{i} Y $. Then,
    according to Saito, we have isomorphisms,
    \begin{equation}\label{saitiso}
      \theta': a_{k\ast} \Z/l^n \xrightarrow{\sim} i^\ast \mathbb{R}^k j_\ast  \Z/l^n(k)  
    \end{equation}
    (the reader would notice a shift in notation due to our definition
    of $\Ymm{k}$), and a short exact sequence
    \begin{equation}\label{saitshort}
      0\to \mathbb{R}^k\Psi  \Z/l^n \xrightarrow{\bar{\theta}} i^\ast \mathbb{R}^{k+1} j_\ast  \Z/l^n (1) \to  \mathbb{R}^{k+1}\Psi  \Z/l^n(1) \to 0\;,
    \end{equation}
    whose definition we will recall in a moment. To make sense out of
    these formulas, note that we identify sheaves on $Y$ with sheaves
    on $\Ybar$ with an action of the Galois group. Since there is no
    $\Ymm{3}$, the isomorphism above gives
    $ i^\ast \mathbb{R}^3 j_\ast \Z/l^n = 0 $ and the short exact
    sequence for $k=2$ gives $ \mathbb{R}^2\Psi \Z/l^n = 0 $. Then for
    $k=1$ the short exact sequence yields an isomorphism
    \begin{equation*}
      \mathbb{R}^1\Psi  \Z/l^n(1) \xrightarrow{\bar{\theta}} i^\ast \mathbb{R}^2 j_\ast  \Z/l^n (2)
    \end{equation*}
    from which we get the required identification by composing with
    the inverse of $\theta'$.

    Next we recall how the thetas are defined, focusing on a
    neighborhood of a point in $Y^{(2)}$ locally given by an equation
    $xy=\pi$. The special fiber $Y$ is defined there by $\pi$ and the
    two components of $Y$ passing through the point are given by the
    additional equation $x=0$ and $y=0$. Let $i_i$ be the embedding of
    $Y_i$ into $X$, $j_i: X-Y_i \to X$ the obvious open immersion, and
    $a^i: Y_i \to Y$. Saito defines a map
    \begin{equation*}
      \theta_i: a_\ast^i   \Z/l^n \to i_i^\ast \mathbb{R}^1 j_{i\ast}  \Z/l^n(1) 
    \end{equation*}
    to be the map sending $1$ to the Kummer image of the local
    generator of $\O(-Y_i)$. On our local neightborhood, these are
    clearly the Kummer images of $x$ and $y$ respectively, which we
    will denote by $(x)$ and $(y)$. Then, following Saito, the map
    \begin{equation*}
      \theta' = \sum \theta_i:  a_{1\ast}  \Z/l^n \to  i^\ast \mathbb{R}^1 j_{\ast}  \Z/l^n(1)
    \end{equation*}
    is an isomorphism and the isomorphisms~\eqref{saitiso} are deduced
    from it by taking cup products. Finally, the map
    \begin{equation*}
      i^\ast \mathbb{R}^k j_{\ast}  \Z/l^n \to  \mathbb{R}^k \Psi  \Z/l^n
    \end{equation*}
    is surjective and the map $\bar{\theta}$ is induced by the map
    $ i^\ast \mathbb{R}^k j_{\ast} \Z/l^n \to i^\ast \mathbb{R}^{k+1}
    j_{\ast} \Z/l^n(1) $ obtained by cup product with the Kummer class
    of $\pi$, which, on our neighborhood, is $(x)+(y)$. We finally
    obtain the map we want, locally on our chosen neighborhood as the
    composition
    \begin{equation}\label{saitcomp}
      \mathbb{R}^1 \Psi  \Z/l^n(1) \xrightarrow{\text{lift}}   i^\ast \mathbb{R}^1 j_{\ast}  \Z/l^n (1) \xrightarrow{((x)+(y)) \cup}   i^\ast \mathbb{R}^2 j_{\ast}  \Z/l^n (2) \xleftarrow{(x)\cup(y) \cup}  a_{2\ast}  \Z/l^n 
    \end{equation}
    where the last map is an isomorphism that needs to be
    inverted. From this it is clear that the Kummer images of $x$ and
    $y$ go to $\pm 1$ respectively. Rigidifying and taking the residue
    of the corresponding $\dlog$'s on the resulting annulus would
    obviously give the same. So, finally, our required map would map a
    torsion class in the picard group to the cocycle $(g_k)$ and then
    apply~\eqref{saitcomp} to the Kummer image of $g_k$. This is now
    clearly the same as the map claimed in the Proposition.
  \end{proof}
  Switching to $K_2$ for convenience, we can now concentrate on a
  single annulus $e$ and compute the map
  \newcommand{\Pic}{\operatorname{Pic}}
  \newcommand{\spec}{\operatorname{Spec}}
  \begin{equation}\label{wayone}
    K_2(X)/l^n \to H^1(K,\Pic(X)[l^n]\otimes \mu_{l^n})  \xrightarrow{\res_e} H^1(K,\mu_{l^n}) \to K^\times/(K^\times)^{l^n} 
  \end{equation}
  where $\mu$ denotes roots of unity and $\res_e$ is the ``$e$
  component'' of the map in Proposition~\ref{jacmap}. Let
  $A=\O(e)$. Furthermore, identify $e$ with some
  $A(r,s)\subset \PP^1$. Let $B$ be the ring of rational functions on
  $\PP^1$ regular on $e$ and let $D=D(r)$.  We have a map
  $\spec(A) \to X$ and using the pullback via this map we can write
  the map~\eqref{wayone} as the composition of
  $ K_2(X)/l^n \to K_2(A)/l^n $ with
  \begin{equation}\label{waytwo}
    K_2(A)/l^n \to H^1(K,\Pic(A)[l^n]\otimes \mu_{l^n})  \xrightarrow{\res_e} H^1(K,\mu_{l^n}) \to K^\times/(K^\times)^{l^n} \;.
  \end{equation}
  For $f\in B$ we have
  \begin{equation*}
    \res_e \dlog(f) = \sum_{x\in D} \res_x \dlog(f)
  \end{equation*}
  and therefore the restriction of~\eqref{waytwo} to $ K_2(B)/l^n $ is
  the sum over $x\in D$ of the maps
  \begin{equation*}
    K_2(B)/l^n \to H^1(K,\Pic(B)[l^n]\otimes \mu_{l^n})  \xrightarrow{\res_x} H^1(K,\mu_{l^n}) \to K^\times/(K^\times)^{l^n} \;.
  \end{equation*}
  According to~\cite[Lemma~4.3 (2)]{Asak06} for each $x\in D$ this
  last map is just the tame symbol at $x$ modulo
  $(K^\times)^{l^n}$. It is easy to see that if $f,g\in A$ are
  sufficiently well approximated by $f_i,g_i\in B$, then their images
  in $K_2(A)/l^n$ become identical. The theorem follows easily.
\end{proof}
\section{The relation with the syntomic regulator}\label{sec:syntomic}

In this section we will analyze the construction of the toric
regulator at the prime $l=p$, after tensoring with $\Q$. We will see
that the logarithm of the toric regulator may be computed using the
syntomic regulator of Nekov{\'a}{\v{r}} and Nizio\l. We will then
check our description of the toric regulator for $K_2$ of curves via
the P\'al regulator using the computation of the syntomic regulator in
this case by the first named author~\cite{Bes18}.

Let $X$ be a smooth variety over a $p$-adic field $K$.  Then,
Nekov{\'a}{\v{r}} and Nizio\l\ \cite[Theorem~5.9]{Nek-Niz14} show that
the regulator map,
\begin{equation}\label{regp}
  \reg_p: \hmot(X,k+1,r)_0 \to H^1(K,H_{\et}^{k}(X\otimes_K \bar{K}, \Qp(r)))\;,
\end{equation}
factors via the subgroup
$\hst^1(K,H_{\et}^{k}(X\otimes_K \bar{K}, \Qp(r))) $, where, for a de Rham representation $V$ of $G$,
$\hst^1(V)$ is semi-stable
cohomology, which may be interpreted as the group of Yoneda extensions
in the category of potentially semi-stable representation and may be
computed in terms of the complex $\cst(V)$ of~\cite[1.19]{Nek93},
\begin{equation}\label{sscompt}
  \cst(V): \Dst(V) \xrightarrow{(\varphi-1,N,-i)}\Dst(V)\oplus \Dst(V) \oplus \DR(V)/F^0 \xrightarrow{N+1-p\varphi+0} \Dst(V)\;.
\end{equation}
Here, $\Dst$ and $\DR$ are the functors defined by Fontaine: $\Dst(V)$
is a $K_0$-vector space, where $K_0$ is the maximal unramified
extension of $\Qp$ inside $K$, equipped with a linear nilpotent
operator $N$ (called monodromy) and a semi-linear (with respect to the
unique lift of Frobenius on $K_0$ operator $\varphi$ (Frobenius),
satisfying the relation
\begin{equation*}
  N\varphi= p \varphi N\;,
\end{equation*}
and $\DR(V)$ is a filtered $K$-vector space, and there is an
isomorphism $\Dst(V)\otimes K\to \DR(V)$. We remark that the map
between $\hst$, computed in terms of the complex $\cst$, and Galois
cohomology, is a manifestation of the Bloch-Kato exponential
map~\cite[Definition~3.10]{Blo-Kat90}.

Assume now that $X$ has totally degenerate reduction. Let
$V=H_{\et}^{k}(X\otimes_K \bar{K}, \Qp(r)) $. We first note that as the reduction of $X$ is semi-stable, the representation $V$ is semi-stable. It follows~\cite[1.24 (3)]{Nek93} that
\begin{equation}
\hst^1(K,V)=H_g^1(V)\; .\label{eq:stisg}
\end{equation}
Let us compute
$\hst^1(K,V) $.  Let $D=\Dst(V)$. Then, as a $K_0$-vector space with a
Frobenius and a monodromy operator it equals, by the semi-stable
conjecture of Fontaine, proved by Tsuji~\cite{Tsu99} to Hyodo-Kato
cohomology $H^k(Y^\times/W^\times) $. It follows from~\cite{Ras-Xar07}
that we have a slope decomposition
\begin{equation*}
  D = \bigoplus_{i+j=k} T_j^{i-j} \otimes K_0(r-j)
\end{equation*}
and the monodromy operator is compatible with the one defined on the
$T$'s. For simplicity let us renumber this as follows: We have a
vector space decomposition
\begin{equation*}
  D= \bigoplus_i D^i\;,\; D^i \isom T^i\otimes K_0\;,\; T^i = T_{i+r}^{k-2r-2i}\;,
\end{equation*}
where, with respect to the rational structure provided by $T^i$, the
Frobenius $\phi$ acts by $p^i \sigma$, so that
$(D^i)^{\phi=p^i}= T^i\otimes \Q_p$. The monodromy maps are induced by
the ones defined before $N:T^i\to T^{i-1}$. Note that by
remark~\ref{inj} the map $N:T^0\to T^{-1}$ is injective after
tensoring with $\Q$.  The filtration is compatible with the filtration
on the individual terms, where
$\DR(\Qp(i))= F^{-i} \supset F^{-i-1}=0$.  We can now compute the
semi-stable cohomology of $V$.
\begin{proposition}
  We have a short exact sequence
  \begin{equation}\label{synshort}
    0\to \DR(V)/F^0 \to  \hst^1(K,V) \to (T^{-1}/N T^0)\otimes \Qp \to 0
  \end{equation}
  and a map $\hst^1(V) \to \DR(V)/(F^0+ T^0\otimes \Qp) $ such that
  the composition
  \begin{equation}
    \label{compiten}
    \DR(V)/F^0 \to  \hst^1(K,V) \to \DR(V)/(F^0+ T^0\otimes \Qp)
  \end{equation}
  is the projection. Both of these maps are functorial.
\end{proposition}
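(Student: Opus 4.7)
The plan is to compute the cohomology of $\cst(V)$ directly, exploiting the slope decomposition $D = \bigoplus_i D^i$ with $D^i = T^i \otimes K_0$ and $\varphi|_{D^i} = p^i \sigma$. Because the cocycle equation $d_1(y,z,w) = Ny + (1-p\varphi)z$ does not involve $w$, I would write $\ker(d_1) = E \oplus \DR(V)/F^0$ with $E = \{(y,z) \in D \oplus D : Ny = (p\varphi - 1)z\}$, while $\im(d_0) = \{((\varphi-1)x, Nx, -i(x)) : x \in D\}$. To prove the injectivity of $\DR(V)/F^0 \hookrightarrow \hst^1(K,V)$ via $w \mapsto [(0,0,w)]$, observe that $(0,0,w) = d_0(x)$ forces $(\varphi-1)x = 0$ and $Nx = 0$: the first condition confines $x$ to the $\sigma$-fixed part $T^0 \otimes \Qp$ of $D^0$, and Remark~\ref{inj} then forces $x = 0$, hence $w = 0$.

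To identify the cokernel I would show explicitly that $E = F + \{(0,z) : z \in T^{-1} \otimes \Qp\}$, where $F = \{((\varphi-1)x, Nx) : x \in D\}$. The slope decomposition of the equation $Ny = (p\varphi - 1)z$ shows that $(p\varphi - 1)$ is bijective on each $D^i$ except on $D^{-1}$, where $p\varphi - 1 = \sigma - 1$; the only obstruction to solving for $z$ given $y$ therefore sits on $D^{-1}$ and reads $\operatorname{tr}(Ny_0) = 0$. Since $N$ is $K_0$-linear, trace commutes with $N$, so this becomes $N(\operatorname{tr}(y_0)) = 0$, and Remark~\ref{inj} implies $\operatorname{tr}(y_0) = 0$, i.e.\ $y_0 \in (\sigma-1)D^0$, so $y = (\varphi-1)x$ for some $x$. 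The relation $N(\varphi-1) = (p\varphi-1)N$ then forces $z - Nx \in \ker(p\varphi - 1)|_{D^{-1}} = T^{-1} \otimes \Qp$, yielding the claimed decomposition. Computing $F \cap \{(0,z) : z \in T^{-1} \otimes \Qp\} = N(T^0 \otimes \Qp)$ gives $E/F \cong (T^{-1}/NT^0) \otimes \Qp$ and the short exact sequence~\eqref{synshort}.

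For the second assertion I would exploit the same explicit representatives. Sending a lift $z \in T^{-1} \otimes \Qp$ to $[(0, z, 0)] \in \hst^1(K, V)$ produces a cocycle, since $(1-p\varphi)z = (1-\sigma)z = 0$ on $T^{-1} \otimes \Qp$. For $x \in T^0 \otimes \Qp$ the coboundary $d_0(x) = (0, Nx, -i(x))$ (using $(\varphi-1)x = 0$) shows that $[(0, Nx, 0)]$ equals the image of $i(x) \in T^0 \otimes \Qp \subset \DR(V)/F^0$ inside $\hst^1(K,V)$, so the assignment descends to a map $(T^{-1}/NT^0) \otimes \Qp \to \hst^1(K,V)/(T^0 \otimes \Qp)$. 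This is a section of the canonical projection, splitting the push-out of~\eqref{synshort} along $\DR(V)/F^0 \twoheadrightarrow \DR(V)/(F^0 + T^0 \otimes \Qp)$; the composition $\hst^1(K,V) \to \hst^1(K,V)/(T^0 \otimes \Qp) \to \DR(V)/(F^0 + T^0 \otimes \Qp)$ via the splitting is the desired map, and~\eqref{compiten} follows by construction. Functoriality is inherited from that of $\Dst$, $\DR$, and $\cst$. The main obstacle is producing this splitting --- which reduces to showing the explicit section is well defined modulo $T^0 \otimes \Qp$ --- and here, as in the first part, everything hinges on the injectivity statement of Remark~\ref{inj}.
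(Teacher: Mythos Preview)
Your argument is correct and, for the short exact sequence, is essentially the paper's proof in different clothing: your $E$ and $F$ are precisely the $1$-cocycles and $1$-coboundaries of the auxiliary complex $\cstp(V)$ obtained from $\cst(V)$ by dropping the de~Rham summand, and the paper proceeds by computing $H^0(\cstp)=0$ and $H^1(\cstp)\cong (T^{-1}/NT^0)\otimes\Qp$ via the same slope analysis you carry out, then invoking the short exact sequence $0\to \DR(V)/F^0[1]\to \cst(V)\to \cstp(V)\to 0$. Your direct computation of $E/F$ and of the injectivity of $\DR(V)/F^0\hookrightarrow\hst^1$ reproduces exactly those two cohomology computations.

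The one place where you genuinely organize things differently is the construction of the map $\hst^1(V)\to \DR(V)/(F^0+T^0\otimes\Qp)$. The paper argues directly: given a cocycle $(x,y,d)$, the vanishing of $H^1(\csti)$ for $i\ne 0$ together with the explicit form of $H^1(\csto)$ lets one replace it by an equivalent cocycle with $x=0$, and then the $d$-component is well-defined modulo $T^0\otimes\Qp$. You instead build an explicit section $\bar z\mapsto [(0,z,0)]$ of the pushed-out sequence and take the associated retraction. These are dual descriptions of the same map (your section lands exactly in the classes with first component $0$ and $d$-component $0$, so subtracting it recovers the $d$-component of a normalized representative), and both rest on the same ingredient, namely the injectivity of $N\colon T^0\otimes\Q\to T^{-1}\otimes\Q$ from Remark~\ref{inj}. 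The paper's version is a touch quicker; yours makes the splitting of the pushed-out sequence visible, which is a pleasant bonus.
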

\begin{proof}
  Let us begin by computing the cohomology of the complex $\cstp(V)$,
  which is obtained from $\cst(V)$ by dropping the de Rham component,
  \begin{equation*}
    \Dst(V) \xrightarrow{(\varphi-1,N)}\Dst(V)\oplus \Dst(V)  \xrightarrow{N+1-p\varphi} \Dst(V)\;.
  \end{equation*}
  We can decompose this last complex according to slopes
  \begin{equation*}
    \cstp(V) = \bigoplus_i \csti(V)
  \end{equation*}
  with
  \begin{equation*}
    \csti(V): D^i \xrightarrow{(p^i\sigma -1,N)} D^i \oplus D^{i-1} \xrightarrow{N+ 1- p^i\sigma} D^{i-1}\;.
  \end{equation*}
  Since $p^i\sigma -1$ is bijective unless $i=0$, we immediately see
  that $H^0(\csti)=0$ unless $i=0$, and also for $i=0$ since
  $N:T^0\to T^{-1}$ is injective after tensoring with $\Q$. Consider
  next $H^1$. Suppose $(x,y)\in D^i\oplus D^{i-1}$ represents an
  element in $H^1(\csti)$. If $i\ne 0$ we may use the bijectivity of
  $p^i\sigma -1$ to assume $x=0$ and the equation on $y$ becomes
  $(p^i\sigma -1)y=0$ so $y=0$ as well. Thus, $H^1(\csti)=0$ unless
  $i=0$. In this last case we can write explicitly
  \begin{equation*}
    H^1(\csto) = \frac{\{(x,y),\;x\in D^0,\; y\in D^{-1},\; Nx= (\sigma-1)y\}}{\{((\sigma-1)z,Nz),\; z\in D^0\}}
  \end{equation*}
  and we have the following.
  \begin{lemma}
    We have an isomorphism
    $(T^{-1}/N T^0)\otimes \Qp \xrightarrow{\sim} H^1(\csto)$ given by
    $u\mapsto (0,u)$
  \end{lemma}
  \begin{proof}
    The map is clearly well defined and injective. Surjectivity
    amounts to the statement that any element in $H^1(\csto)$ has a
    representative $(0,y)$, i.e, that any representative $(x,y)$ has
    $x\in \operatorname{Im} \sigma-1$. This is true because $N$ is
    defined over $\Q$ and is injective.
  \end{proof}
  \begin{remark}\label{twoexrem}
    The above Lemma may be interpreted for an extension
    \begin{equation}
    0\to T^{-1}\otimes \Qp(1) \to V \to T^0\otimes \Qp \to 0\label{eq:synu}
  \end{equation}
 as saying that the map
    $\hst^1(K,T^{-1}\otimes \Qp(1))\to \hst^1(K,V) $ is surjective.
  \end{remark}
  We have an obvious short exact sequence of complexes
  \begin{equation*}
    0 \to \DR(V)/F^0[1] \to \cst(V) \to \cstp(V) \to 0
  \end{equation*}
  and the associated long exact sequence together with the computation
  of the cohomology of $\cstp(V)$ immediately gives the short exact
  sequence~\eqref{synshort}. To define the map
  $\hst^1(V) \to \DR(V)/(F^0+ T^0\otimes \Qp) $ start with a
  representative $(x,y,d)\in \Dst(V)\oplus \Dst(V) \oplus \DR(V)/F^0$
  and use the computation of the cohomology of $\cstp(V)$ to see that
  it is equivalent to a representative with $x=0$. The $d$ component
  of this representative is now unique up to an element of
  $T^0\otimes \Qp$ and this gives the map. The composed
  map~\eqref{compiten} is clearly the projection.
\end{proof}

Consider now the case $V=\Qp(1)$, so that $T^{-1} = \Z$ while
$T^0=0$. Clearly, in this case the map
$\hst^1(K,\Qp(1))\to \DR(V)/F^0=K $ splits the short exact
sequence~\eqref{synshort} and we have an isomorphism
$\hst(K,\Qp(1))\isom K\oplus \Qp $.  Nekov{\'a}{\v{r}} proves the
following result.
\begin{proposition}[{\cite[1.35]{Nek93}}]\label{neklog}
  The composed map
  $K^\times \xrightarrow{Kummer} \hst(K,\Qp(1))\isom K\oplus \Qp $ is
  given by $x\to (\log(x),v(x))$.
\end{proposition}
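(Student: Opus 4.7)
The plan is to compute both sides for $V = \Qp(1)$ explicitly, reduce by linearity to two test cases (a principal unit and a uniformizer), and match them using the Bloch-Kato exponential on one side and an explicit cocycle computation on the other. To set up, $\Dst(\Qp(1)) = K_0 e$ with $\varphi e = p^{-1}e$ and $Ne=0$, while $\DR(\Qp(1))/F^0 = Ke$. Reading off~\eqref{sscompt}, a cocycle $(x,y,d)\in \Dst\oplus \Dst\oplus \DR/F^0$ satisfies $y = p\varphi y$, forcing $y = \alpha e$ with $\alpha\in \Qp$; modulo coboundaries only $d\in K$ and $\alpha\in \Qp$ remain, recovering the splitting $\hst^1(K,\Qp(1))\cong K\oplus \Qp$ from~\eqref{synshort}, the first summand being the image of $\DR/F^0$ and the second the image in $(T^{-1}/NT^0)\otimes\Qp = \Qp$.

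For $u \in 1 + \mathfrak{m}_K$ small enough for $\log$ to converge, the Bloch-Kato exponential~\cite[Definition~3.10]{Blo-Kat90} identifies the $K$-summand of $\hst^1(K,\Qp(1))$ with $H^1_e(K,\Qp(1))$, and it is classical that for $V=\Qp(1)$ the Kummer map agrees with $\exp\circ \log$ on principal units. Thus the Kummer image of $u$ is represented by the cocycle $(0,0,\log u)$, giving $(\log u, 0) = (\log u, v(u))$ as desired.

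For a uniformizer $\pi$, the Kummer class $\delta(\pi)$ lies in $\hst^1\setminus H^1_e$, so its image in the $\Qp$-quotient must be nonzero. I would compute it by unwinding the boundary map of the short exact sequence $0\to \DR/F^0[1]\to \cst(\Qp(1)) \to \cstp(\Qp(1)) \to 0$ used in the proof of the preceding proposition: tracing $\delta(\pi)$ through the fundamental exact sequence of $p$-adic Hodge theory relating $\Qp(1)$ to the period rings, it is represented by the cocycle $(0, e, \log\pi)$ in our coordinates, whose components read off as $(\log\pi, 1) = (\log\pi, v(\pi))$. Combined with the previous paragraph, linearity on the generating set $\{\pi\} \cup (1+\mathfrak{m}_K)$ (roots of unity dying after tensoring with $\Qp$) gives $\delta(x) = (\log x, v(x))$ for all $x \in K^\times$.

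The main obstacle is the explicit identification of $\delta(\pi)$ with the cocycle $(0,e,\log \pi)$. This requires tracing carefully how the comparison between Galois cohomology and $\hst^1$ computed via $\cst$ is built from the fundamental exact sequence, keeping track of how a Kummer lift of $\pi$ to $B_{\textup{st}}$ intertwines with Frobenius, monodromy, and the de Rham filtration. This is precisely the content of Nekov{\'a}{\v{r}}'s argument in~\cite[1.35]{Nek93}; granting that identification, everything else is a routine unpacking of the splitting of~\eqref{synshort}.
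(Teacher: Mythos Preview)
The paper does not give its own proof of this proposition; it is stated with attribution to Nekov\'a\v{r}~\cite[1.35]{Nek93} and used as a black box. Your sketch is a reasonable outline of how such an argument goes, and you correctly identify that the only nontrivial step---matching the Kummer class of a uniformizer with an explicit cocycle $(0,e,\log\pi)$ in $\cst(\Qp(1))$ via the fundamental exact sequence---is precisely the content of Nekov\'a\v{r}'s computation. Since the paper itself defers entirely to that reference, there is nothing to compare your approach against.

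One small remark on your write-up: the $p$-adic logarithm converges on all of $1+\mathfrak{m}_K$, so the qualifier ``small enough'' is unnecessary; and you should note that the value $\log\pi$ depends on the choice of branch of the logarithm, which is implicit in the paper's conventions but worth flagging if you are writing this out in full.
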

\begin{proposition}\label{missingpiece}
  For the short exact sequence~\eqref{synshort} the composed map
  \begin{equation*}
    T^0\otimes \Qp \isom \hst^0(K,T^0\otimes \Qp) \to \hst^1(K,T^{-1}\otimes \Qp(1)) \to 
    T^{-1} \otimes K^{\times(l)} \xrightarrow{v} T^{-1} \otimes \Qp
  \end{equation*}
  is just the monodromy map.
\end{proposition}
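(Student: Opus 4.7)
My plan is to identify the boundary appearing in the composition as the connecting homomorphism associated to the two-step extension~\eqref{eq:synu} of Remark~\ref{twoexrem}, compute it explicitly through the complex $\cst$ of~\eqref{sscompt}, and read off its slope component.

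First I would reduce the statement to a computation of the slope component. By Proposition~\ref{neklog}, under the isomorphism $\hst^1(K,\Qp(1))\isom K\oplus\Qp$, the composition of the Kummer map with the valuation projection is the second projection onto $\Qp$. Tensoring with $T^{-1}$, the map $\hst^1(K,T^{-1}\otimes\Qp(1))\to T^{-1}\otimes K^{\times(p)}\xrightarrow{v}T^{-1}\otimes\Qp$ therefore coincides with the slope projection $\hst^1(K,T^{-1}\otimes\Qp(1))\to (T^{-1}/NT^0)\otimes\Qp$ coming from~\eqref{synshort}, applied with $V=T^{-1}\otimes\Qp(1)$ (for which $T^0=0$ and the quotient is simply $T^{-1}\otimes\Qp$). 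So it suffices to show that the slope part of $\delta(t)\in\hst^1(K,T^{-1}\otimes\Qp(1))$ equals $Nt$, where $\delta$ is the boundary coming from~\eqref{eq:synu}.

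Next I would carry out this computation using $\cst(V_2)$, where $V_2$ is the middle term of~\eqref{eq:synu}. The slope decomposition splits $V_2$ as $\varphi$-modules, so $\Dst(V_2)=D^0\oplus D^{-1}$, and the only nontrivial structure relating the summands is the monodromy $N\colon D^0\to D^{-1}$, which on the rational $T$-summands is the operator $N\colon T^0\to T^{-1}$ appearing in the proposition. For $t\in T^0\otimes\Qp=(D^0)^{\varphi=1}$ I would lift to $\tilde t=(t,0)\in\Dst(V_2)$ and apply the differential $(\varphi-1,N,-i)$: the $\varphi$-component vanishes, the $N$-component equals $(0,Nt)$, and the de Rham component has image zero in $\DR(T^0\otimes\Qp)/F^0=0$, hence lifts canonically to some $\eta\in\DR(T^{-1}\otimes\Qp(1))/F^0=T^{-1}\otimes K$. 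Thus $\delta(t)$ is represented in $\cst^1(T^{-1}\otimes\Qp(1))$ by $(0,Nt,\eta)$, and its slope component, via the identification of $(T^{-1}/NT^0)\otimes\Qp$ with $H^1(\csto)$ by $y\mapsto(0,y)$ from the proof of~\eqref{synshort}, is exactly $Nt$.

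The step I expect to be the main obstacle is bookkeeping the identifications inside $\cst(V_2)$, in particular verifying that the de Rham component of $d\tilde t$ genuinely lifts to $T^{-1}\otimes K$. This reduces to the observation that $F^0\DR(V_2)$ maps isomorphically onto $F^0\DR(T^0\otimes\Qp)=T^0\otimes K$ and is therefore the graph of a $K$-linear section, but sign and normalization conventions (already visible in~\eqref{sscompt}) are where subtle errors could creep in; once these are pinned down the whole computation becomes essentially formal.
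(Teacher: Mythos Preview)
Your proposal is correct and follows essentially the same approach as the paper: use Proposition~\ref{neklog} to identify the valuation with the projection onto the $\Qp$-component of $\hst^1(K,\Qp(1))$, then compute the connecting map explicitly in the complex and read off that component as $Nt$. The paper's one-line proof phrases this as ``project onto the cohomology of $\cstp$'', which is exactly your slope projection; the only difference is that by dropping the de Rham summand at the outset the paper never needs to track the class $\eta$ at all, so the bookkeeping you flag as the main obstacle simply disappears.
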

\begin{proof}
By Proposition~\ref{neklog} we can compute the map by projecting on the cohomology of the complexes $\cstp$, where the result is easy.
\end{proof}
\begin{theorem}\label{logissyn}
  The toric regulator at $p$ exists. Furthermore we have the following
  commutative diagram,  where
  $V=H_{\et}^{k}(X\otimes_K \bar{K}, \Qp(r)) $,
  \begin{equation*}
    \xymatrix{
      {\hmot(X,k+1,r)_0} \ar[r]\ar[d] & \hst^1(K,V) \ar[r] &
      \DR(V)/(F^0+ T^0\otimes \Qp) \ar[d] \\
      {\hT(X,k+1,r)} \ar[rr]^{\log} && T^{-1}\otimes K / T^0\otimes \Qp .
    }
  \end{equation*}
  Here, the vertical map on the right is a projection relative to the
  subspace $\oplus_{i\le -2} T^i\otimes K$.
\end{theorem}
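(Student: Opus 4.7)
The plan is to unify the two assertions in the theorem by routing everything through the short exact sequence \eqref{synshort} applied to the two-step representation $M_p'\otimes \Qp$ from \eqref{mlpdiag}.

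First, to dispatch existence of the toric regulator at $p$, I must supply the input needed by Definition~\ref{toricl} and Corollary~\ref{nthreeneed} at $l=p$: almost injectivity of the boundary $H_g^1(K, T^0\otimes \Zp)\to H^2(K, T^{-1}\otimes \Zp(1))$ associated to \eqref{mlpdiag} (in the renumbering used in this section). Identifying $\hst^1 = H_g^1$ via \eqref{eq:stisg} and passing to $\Qp$-coefficients, Remark~\ref{twoexrem} provides the surjection $\hst^1(K, T^{-1}\otimes \Qp(1))\to \hst^1(K, M_p'\otimes \Qp)$, so exactness of the long sequence of \eqref{mlpdiag} forces $\hst^1(K, M_p'\otimes \Qp)\to \hst^1(K, T^0\otimes \Qp)$ to vanish, which is exactly injectivity of the subsequent boundary. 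Clearing denominators yields the required almost injectivity.

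For the commutativity of the diagram, my plan is first to use functoriality of both the Nekov{\'a}{\v{r}}--Nizio\l\ regulator and of the map $\hst^1(V)\to \DR(V)/(F^0 + T^0\otimes \Qp)$ constructed together with \eqref{synshort} in order to reduce from $V$ to the sub-quotient $M_p'\otimes \Qp$. The right-hand vertical projection, described as the projection relative to $\bigoplus_{i\le -2}T^i\otimes K$, corresponds --- via the slope decomposition $\DR(V)=\bigoplus_i T^i\otimes K$ and the expected compatibility of $F^0\DR(V)$ with these slopes for totally degenerate reduction --- to the natural surjection $\DR(V)/F^0\to \DR(M_p'\otimes \Qp)/F^0 = T^{-1}\otimes K$ induced by the inclusion of the relevant sub-quotient. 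What then remains is to verify that the map $\hst^1(K, M_p'\otimes \Qp)\to T^{-1}\otimes K/T^0\otimes \Qp$ built from \eqref{synshort} agrees with the logarithm of the natural identification of $\hst^1(K, M_p'\otimes \Qp)$ with the augmented-monodromy cokernel at $p$.

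For this last identification I would invoke Proposition~\ref{neklog} to write $\hst^1(K, T^{-1}\otimes \Qp(1))\cong T^{-1}\otimes K \oplus T^{-1}\otimes \Qp$ as the log/val decomposition of the Kummer map, express $\hst^1(K, M_p'\otimes \Qp)$ as the quotient by the boundary image of $T^0\otimes \Qp$ using Remark~\ref{twoexrem}, and note that by Proposition~\ref{missingpiece} the val-component of this image is the classical monodromy. The main obstacle I anticipate is a direct but slightly delicate computation in the complex $\cst$, in the same spirit as the proof of Proposition~\ref{missingpiece}, verifying that the log-component of the boundary $T^0\otimes \Qp\to \hst^1(K, T^{-1}\otimes \Qp(1))$ reproduces precisely the copy of $T^0\otimes \Qp$ that is being quotiented inside $T^{-1}\otimes K$ in the bottom right of the diagram. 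Once this is in place, the commutativity drops out of \eqref{synshort} for $M_p'\otimes \Qp$ combined with Nekov{\'a}{\v{r}}'s identification.
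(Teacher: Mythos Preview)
Your proposal is correct and follows essentially the same route as the paper. Two minor points of divergence are worth noting.

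First, for existence you focus only on the almost injectivity of the boundary, relying on Section~\ref{sec:toric-regulator} for the factoring through $W_{2r-k}V$. The paper re-derives that factoring explicitly at $l=p$ by observing that $V':=V/W_{2r-k}V$ has only $D^i$ with $i>0$ and $\DR(V')=F^0$, so $\hst^1(K,V')=0$ directly from~\eqref{synshort}. This is morally the same as the Bloch--Kato input you are invoking implicitly, just phrased through the slope decomposition.

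Second, the ``slightly delicate computation'' you anticipate---identifying the log-component of the boundary image of $T^0\otimes\Qp$ inside $T^{-1}\otimes K$---is unnecessary. The paper bypasses it by writing down the functorial square
\begin{equation*}
\xymatrix{
{\hst^1(K,T^{-1}\otimes \Qp(1))} \ar@{->>}[r] \ar[d] & \hst^1(K,V'') \ar[d] \\
T^{-1} \otimes K \ar[r] & \DR(V'')/(F^0 + T^0\otimes \Qp)
}
\end{equation*}
with surjective top arrow (Remark~\ref{twoexrem}) and left vertical equal to $\log$ by Proposition~\ref{neklog}. Commutativity of the big diagram then follows formally, without ever isolating the log-component of the boundary: the quotient by $T^0\otimes\Qp$ on the bottom right is exactly what the construction of the map $\hst^1(V)\to\DR(V)/(F^0+T^0\otimes\Qp)$ already produces, so no separate matching is required. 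Organizing the argument this way dissolves your anticipated obstacle.
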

\begin{proof}
  Consider the quotient $V'=V/ W_{2r-k} V$. As this does not have non-positive $D^i$'s, and as
  $\DR(V')=F^0$ (since this is true for all the Tate subquotients), we
  easily see that $\hst^1(K,V')=0$. Thus, we may again do the
  factoring, as in Section~\ref{sec:toric-regulator}, of the regulator
  into $\hst^1(K, W_{2r-k} V) $, and then project to $\hst^1(K,V'') $
  with $V''=W_{2r-k} V/ W_{2r-k-4} V $. Furthermore, the map
  $\hst^1(K,V'')\to \hst^1(K,T^0\otimes \Qp)$ is $0$ because
  $\DR(\Qp) =F^0$ and by Remark~\ref{twoexrem}. Thus, the toric
  regulator exists at $p$ as in Section~\ref{sec:toric-regulator}. The
  commutativity of the diagram in the theorem is now straightforward
  from the following commutative diagram
  \begin{equation*}
    \xymatrix{
      {\hst^1(K,T^{-1}\otimes \Qp(1))} \ar[r] \ar[d] & \hst^1(K,V'')\ar[r] \ar[d] & 0 \\
      T^{-1} \otimes K \ar[r] & \DR(V'')/(F^0 + T^0\otimes \Qp) & \\
    }
  \end{equation*}
  and the fact that the composition of the Kummer map with the
  vertical map on the left is just the log map by Nekov{\'a}{\v{r}}'s
  result~\ref{neklog}.
\end{proof}

Nekov{\'a}{\v{r}} and Nizio\l\ define the syntomic regulator,
\begin{equation}
  \label{eq:regulator}
  \reg: \hmot(X,k+1,r) \to \hsyn(k+1,X,r)\;,
\end{equation}
into syntomic cohomology groups. These groups are constructed in such a way that there
is a spectral sequence
\begin{equation} \label{eq:spectral} E_2^{p,q}=
  \hst^p(K,\het^q(X\otimes \Kbar,\Qp(r))) \Rightarrow
  \hsyn({p+q},X,r)\;.
\end{equation}
One easily deduces from this spectral sequence and the syntomic
regulator a map
\begin{equation*}
  \reg: \hmot(X,k+1,r)_0 \to \hst^1(K,H_{\et}^{k}(X\otimes_K \bar{K}, \Qp(r))\;,
\end{equation*}
which is the same as the map~\eqref{regp}. The syntomic regulator is
computed without \'etale cohomology, using a mixture of de Rham and
(log) crystalline cohomology constructions, and so is more computable,
at least in principle, using a kind of ``$p$-adic differential
geometry'' approach. Indeed, in the good reduction case the syntomic
regulator has been defined for a long time and has been computed in
several cases, primarily by the first named
author~\cite{Bes98b,Bes-deJ98,Bes-deJ02,Bes10}. Recently, some of
these results have been extended to the semi-stable reduction
case~\cite{Bes18}.

We can summarize the results and comments of this section to this
point by the motto ``The log of the toric regulator is computed from
the syntomic regulator''. To end this section we illustrate this by
revisiting the case of $K_2$ of a totally degenerate curve and showing
how the syntomic regulator is indeed the logarithm of the P\'al rigid
analytic regulator.

Let $X$ be as in Section~\ref{sec:pal} and consider $k=1$, $r=2$
again. We have $$V=H_{\et}^1(X\otimes_K \bar{K}, \Qp(2)) $$.  Recall
that in this case $T^0=0$. We have $\DR(V)\isom \hdr^1(X/K)$ and
$F^0 \DR(V)=F^2 \hdr^1(X/K)=0$. According to Theorem~\ref{logissyn}
the logarithm of the toric regulator map agrees with the composed map
\begin{equation}\label{ktwosyn}
  \reg_p: \hmot(X,2,2) \xrightarrow{\reg_{\syn}} \hst^1(K,V)  \to  \hdr^1(X/K) \to T^{-1} \otimes K
\end{equation}
We identify the vector space on the right hand side with the space
$\hh(\Gamma,K)$ of $K$-valued harmonic cochains on the dual graph. As
expected from our motto, we get the following result
\begin{theorem}
  Let $X$ be as above. Then the diagram
  \begin{equation*}
    \xymatrix{
      {\hmot(X,2,2)} \ar[r]^{\reg_P} \ar[rd]^{\reg_p} & T^{-1} \otimes K^{\times} \ar[d]^{\log} \\
      & T^{-1} \otimes K
    }
  \end{equation*}
  commutes, with $\reg_P$ the P\'al regulator and $\reg_p$ the
  $p$-adic regulator from~\eqref{ktwosyn}.
\end{theorem}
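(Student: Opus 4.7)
The plan is to compare both sides as $K$-valued harmonic cochains on the dual graph $\Gamma$, using the identification $T^{-1}\otimes K \isom \hh(\Gamma,K)$, and to reduce the equality to a purely local computation on each annulus. Unwinding the targets: applied to a Steinberg symbol $\{f,g\}$, the map $\log\circ\reg_P$ gives the cochain $e\mapsto \log t_e(f,g)$ by definition, while $\reg_p$ from \eqref{ktwosyn} gives the cochain $e\mapsto \res_e(\omega)$, where $\omega\in \hdr^1(X/K)$ is any de~Rham representative of the image of $\reg_{\syn}(\{f,g\})$ under $\hst^1(K,V)\to \hdr^1(X/K)$.

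First, I would invoke the computation of the syntomic regulator for $K_2$ of a semi-stably reducing curve from~\cite{Bes18} to produce an explicit $\omega$. The formula there expresses $\reg_{\syn}(\{f,g\})$ on each wide open $U_v\subset X$ by a Coleman-integrated product of the form $\log f \cdot d\log g$, glued across the annuli using a fixed branch of the $p$-adic logarithm extending $\log: K^\times \to K$ with a chosen value of $\log\pi$. This makes the residue $\res_e(\omega)$ a computable local quantity on each annulus $e$.

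Second, the main work is to verify, on a single annulus $e=A(r,s)$, the identity
\begin{equation*}
  \res_e(\omega) \;=\; \log t_e(f,g).
\end{equation*}
By continuity in rigid analytic uniform convergence on $e$ -- which holds on the $\reg_P$ side by the very definition of $t_e$ from Theorem~\ref{palthm}, and on the syntomic side because $p$-adic Coleman integration is continuous on annuli -- this reduces to the case of rational functions $f,g$ on $\PP^1$ invertible on $e$. In that case both sides decompose as sums over $x\in D(r)$:
\begin{equation*}
  \res_e(\log f\cdot d\log g) \;=\; \sum_{x\in D(r)} \res_x(\log f\cdot d\log g), \qquad
  \log t_e(f,g) \;=\; \sum_{x\in D(r)} \log t_x(f,g),
\end{equation*}
and the termwise equality $\res_x(\log f\cdot d\log g) = \log t_x(f,g)$ is the classical local $p$-adic residue formula for a tame symbol, verified by a direct power-series expansion at $x$.

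The main obstacle is to match branches of the $p$-adic logarithm. The Coleman log used in~\cite{Bes18} depends on a choice of $\log\pi$, while the target $K^\times \to K$ used on the P\'al side is the standard branch with $\log\pi = 0$. Changing the branch alters each local term by a quantity proportional to $v_x(f)v_x(g)\log\pi$, so that the global change in the resulting $1$-cochain is, by Lemma~\ref{paldeg} together with the residue relation $\sum_x v_x(g) = 0$ on each irreducible component of $Y$, of the form $d\xi$ for some $\xi\in C^0(\Gamma,K)$. Since such a coboundary is orthogonal to harmonic cochains, it vanishes after passage to $\hh(\Gamma,K)$, so the identity descends as required and the diagram commutes.
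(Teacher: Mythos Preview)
Your endgame---the local identity on an annulus, reduction to rational functions by continuity, and the decomposition as a sum over points of $D(r)$---matches the paper's Lemma exactly. The gap is in how you get there.

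The formula in~\cite{Bes18} does not produce an explicit de~Rham form $\omega$ representing $\reg_{\syn}(\{f,g\})$ whose annular residues you can then read off. What it gives is a formula for the \emph{cup product} of the regulator class with a test class $[\omega]\in\ker N=H^1(\Gamma,K)$, namely
\[
  \sum_v \pair{\log(f),F_\omega;\log(g)}_{(U_v-Z)^\dagger}\;-\;\sum_e \chi(\omega)(e)\cdot\pair{\log(f),\log(g)}_e,
\]
where the $\pair{\cdot,\cdot}_e$ are Coleman--Besser local double indices, not residues of $\log f\cdot d\log g$. The expression $\log f\cdot d\log g$ is a Coleman $1$-form, not a rigid-analytic one, and ``$\res_e$'' of it is not the same object as the local index; your assertion that \cite{Bes18} provides such an $\omega$ is a mischaracterization of that paper.

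The paper therefore proceeds by duality: it uses the weight decomposition $\hdr^1(X/K)=H^1(\Gamma,K)\oplus\hh(\Gamma,K)$ and the fact that the cup product pairs these two summands via the pointwise product. Pairing against all $[\omega]\in H^1(\Gamma,K)$ then identifies the $\hh(\Gamma,K)$-component of the regulator. For this to give the clean answer $e\mapsto\pair{\log f,\log g}_e$, one needs the first sum above (the triple-index terms over vertices) to vanish. That vanishing uses in an essential way that each component of the special fiber is a $\PP^1$, via~\cite[Proposition~8.4]{Bes-deJ02}. You never invoke this, and your direct ``residue of $\omega$'' approach has no mechanism to make the vertex contributions disappear.

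Finally, your branch-of-logarithm discussion is unnecessary: once one works with the local indices $\pair{\log f,\log g}_e$ as in~\cite{Bes18}, the branch is already fixed consistently and no coboundary correction is needed.
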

\begin{proof}
  Suppose that an element $\alpha$ of $K_2(X)$ restricts to an element
  $\sum \{f_i,g_i\}$ in $K_2$ of the function field of $X$.
  In~\cite{Bes18} the first named author proved a formula for the cup
  product with a cohomology class $[\omega]$ of the image of $\alpha$
  under
  \begin{equation*}
    \hmot(X,2,2) \xrightarrow{\reg_{\syn}} \hst^1(K,V)  \to  \hdr^1(X/K)\;.
  \end{equation*}
  One checks easily that the projection $\hst^1(K,V) \to \hdr^1(X/K) $
  defined there coincides with the one we have been using. This
  formula was valid when $[\omega]$ is in the kernel of the monodromy
  operator $N$ ($X$ can be any curve with semi-stable reduction). To
  explain the formula and to complete the proof we first analyze
  $\hdr^1(X/K)$ in a bit more detail. We have the weight decomposition
  \newcommand{\pr}{\operatorname{pr}}
  \begin{equation*}
    \hdr^1(X/K) = T^{-2}\otimes K \oplus T^{-1} \otimes K = H^1(\Gamma,K) \oplus \hh(\Gamma,K)\;.
  \end{equation*}
  With respect to this decomposition the monodromy operator vanishes
  on $H^1(\Gamma,K) $ and maps $\hh(\Gamma,K)$ on $H^1(\Gamma,K)$
  via~\eqref{eq:harmonicisog}. The cup product makes both summands
  isotropic and gives the pointwise product~\eqref{eq:harmonicpair}
  otherwise. Consequently, if $\chi$ and $\pr$ denote the projections
  on $H^1(\Gamma,K) $ and $\hh(\Gamma,K)$ respectively, and we have
  $[\omega]\in \operatorname{Ker} N = H^1(\Gamma,K)$,
  $\beta \in \hdr^1(X/K) $, then
  \begin{equation*}
    [\omega]\cup \pr \beta = \chi([\omega])\cdot  \pr(\beta)\;.
  \end{equation*}
  We can finally introduce the formula of~\cite{Bes18}. The formula
  expresses the cup product in term of expressions assigned to the
  individual symbols $\{f,g\}$. The expression that we need is the one
  in Proposition~\ref{vcomp1}, which is the same as the expression for
  the regulator by the main
  theorem~\cite[Theorem~\ref{mainthm1}]{Bes18}. This expression is
  \begin{equation*}
    \sum_v \pair{\log(f),F_\omega;\log(g)}_{{(U_v-Z)^\dagger}} - \sum_e \chi(\omega)(e)\cdot \pair{\log(f),\log(g)}_e\;.
  \end{equation*}
  Here, $Z$ is a subset containing all the singularities of $f$ and
  $g$, but in any case, when all the components of the reduction are
  projective lines, the first term vanishes because all the ``triple
  indices'' appearing in the sum vanish
  by~\cite[Proposition~8.4]{Bes-deJ02}. Thus, the projection of the
  regulator on $\hh(\Gamma,K)$ is the map obtained by sending
  $\{f,g\}$ to
  \begin{equation*}
    e\mapsto \pair{\log(f),\log(g)}_e\;.
  \end{equation*}
  Therefore, the following Lemma completes the proof.
\end{proof}
\begin{lemma}
  For rigid analytic functions on the annulus $e$ we have
  $\pair{\log(f),\log(g)}_e= \log(t_e(f,g))$.
\end{lemma}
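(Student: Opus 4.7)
The plan is to show that both sides are bi-multiplicative (respectively bi-additive after taking $\log$) and continuous on $\mathcal{O}(e)^{\times}\times\mathcal{O}(e)^{\times}$, and then verify equality on a generating family.

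First I would check bilinearity. The right-hand side $\log(t_e(fg,h))=\log(t_e(f,h))+\log(t_e(g,h))$ is immediate from the multiplicativity of the tame symbol, which follows from the definition $t_e=\prod_{x\in D}t_x$ together with the bi-multiplicativity of each local tame symbol $t_x$. The left-hand side is bi-additive in $(\log f,\log g)$ because $\langle -,-\rangle_e$ is a bilinear pairing on Coleman functions on the annulus, and $\log(fg)=\log f+\log g$. Continuity of $t_e$ in $(f,g)$ is built into its definition via approximation by rational functions, while continuity of $\langle \log f,\log g\rangle_e$ with respect to the sup-norm on any closed sub-annulus follows from the Laurent/Coleman-series description of $\log f$ and $\log g$ used in $\cite{Bes18}$. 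By P\'al's Lemma~3.4 (invoked in the proof of Theorem~\ref{palthm}), it suffices to treat the case where $f$ and $g$ extend to rational functions on $\mathbb{P}^1$ with all zeros and poles off the chosen closed sub-annulus.

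Next, after identifying $e$ with $A(r,s)\subset\mathbb{P}^1$ via its parameter $z$, bilinearity lets me reduce to the case where $f$ and $g$ each belong to $\{c,\ z-a\}$ with $c\in K^{\times}$ and $a$ either in $D=D(r)$ or outside $D[s]$.

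I then verify each case directly. If $f=c$ is constant, then on the one hand $t_e(c,g)=c^{\deg_e(g)}$ by Lemma~\ref{paldeg}, so the right side is $\deg_e(g)\log c$; on the other hand the pairing unwinds to $\log(c)\cdot\operatorname{res}_e(d\log g)=\log(c)\cdot\deg_e(g)$, since $\log c$ is a constant Coleman function. The case $g$ constant is symmetric (noting $\log(-1)=0$ in $K$). When $f=z-a$ and $g=z-b$ with $a,b$ both in the complement of $D[s]$, both functions are rigid units on $e$ with vanishing residue and logarithms given by convergent Laurent series; the equality reduces to a standard residue computation and matches $t_e(f,g)=1$ (no contributions from points in $D$). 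When $a\in D$ and $b\notin D[s]$, one computes $t_e(f,g)=g(a)=a-b$ (tame symbol contribution at $a$), and on the other side expands $\log(z-a)=\log z+\log(1-a/z)$ and $\log(z-b)=\log(-b)+\log(1-z/b)$, so the residue pairing extracts $\log(-b)+\log(1-a/b)=\log(b-a)\cdot(-1)$, which equals $\log(a-b)$ modulo $\log(-1)=0$. The case with both $a,b\in D$ similarly gives $t_e(f,g)=-1$, hence $\log(t_e(f,g))=0$, and one checks the pairing also vanishes.

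The main obstacle is the subcase where both $f$ and $g$ contribute nontrivial residues (i.e.\ contain a factor of $z$ after the parameterization), since then $\log f$ and $\log g$ involve the Coleman function $\log z$, which is not rigid analytic on $e$. Here one must use the precise Coleman-theoretic definition of $\langle -,-\rangle_e$ from $\cite{Bes18}$ and verify that the branch convention used there is compatible with the sign and leading-coefficient conventions that produce $t_e$. It suffices to treat $f=g=z$: the right side is $\log(t_e(z,z))=\log(-1)=0$ (the tame symbol contribution at $0$ is $-1$), and the Coleman pairing $\langle\log z,\log z\rangle_e$ is likewise $0$, either by skew-symmetry of the underlying double integral or by direct expansion. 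Alternatively, one can observe that both sides vanish on Steinberg elements (the right side by \eqref{cond2} of Theorem~\ref{palthm}, the left side by the analogous property of the syntomic pairing), so both factor through $K_2$ of the field of meromorphic functions on $e$, where the above generators-and-relations computation then suffices.
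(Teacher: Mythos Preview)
Your strategy is sound and will work, but it is a genuinely different route from the paper's. After reducing to rational $f,g$ on $\PP^1$ (which both proofs do, via continuity), the paper does not split into generators at all: it invokes \cite[Proposition~4.10]{Bes98b}, which gives directly
\[
\pair{\log f,\log g}_e=\sum_{x\in D(r)}\pair{\log f,\log g}_x,
\]
and then observes that for each point $x$ the identity $\pair{\log f,\log g}_x=\log t_x(f,g)$ holds essentially by definition of the local index. Summing over $D(r)$ matches the definition of $t_e$, and continuity finishes. So the entire rational case is a two-line reduction to a known pointwise fact.

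Your approach trades that single citation for an explicit bilinear decomposition into the generators $c$ and $z-a$, which is more self-contained but correspondingly more delicate. A few of your case computations need tightening: in the mixed case $a\in D$, $b\notin D[s]$ the tame symbol at $a$ gives $(a-b)^{-1}$ rather than $a-b$ (harmless after $\log$, but the bookkeeping should be right), and your appeal to ``skew-symmetry of the underlying double integral'' for $\pair{\log z,\log z}_e=0$ is not obviously available without already knowing the identity or going back to the precise definition of the pairing in \cite{Bes98b}. Your Steinberg-relation alternative is cleaner here, but note it too needs the vanishing of $\pair{\log f,\log(1-f)}_e$ as an independent input. None of this is fatal, but the paper's route via Proposition~4.10 avoids all of it.
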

\begin{proof}
  Suppose, after identifying $e$ with $A(r,s)$, that $f$ and $g$ are
  rational functions on $\PP^1$. By~\cite[Proposition~4.10]{Bes98b} we
  have
  \begin{equation*}
    \pair{\log(f),\log(g)}_e = \sum_{x\in D(r)} \pair{\log(f),\log(g)}_x\;.
  \end{equation*}
  At each point in $D=D(r)$ we have, essentially by definition,
  \begin{equation*}
    \pair{\log(f),\log(g)}_x = \log t_x(f,g)\;.
  \end{equation*}
  Thus, the result is true for rational $f$ and $g$ and then is true
  in general by the definition of the P\'al regulator and by
  continuity.
\end{proof}

We close this section with a conjecture, which is suggested by the
relation between the toric regulator with both the syntomic and the
Sreekantan regulator (but note that in this conjecture we do not need
to make any assumptions about the reduction, other than being
semi-stable).
\begin{conjecture}
  The composition
  \begin{equation*}
    \hmot(X,k+1,r)_0 \xrightarrow{reg} \hst^1(K,H_{\et}^{k}(X\otimes_K \bar{K}, \Qp(r)) \to H^1(\cstp(H_{\et}^{k}(X\otimes_K \bar{K}, \Qp(r)))
  \end{equation*}
  factors via the Sreekantan regulator.
\end{conjecture}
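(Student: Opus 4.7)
The plan is to reduce the conjecture, in the case of totally degenerate reduction, to Conjecture~\ref{conjsreek}, and then to bootstrap it to the general semi-stable case by generalizing the $T_j^i$ framework via the weight spectral sequence. First, in the totally degenerate case the computation in the proof of~\eqref{synshort} identifies $H^1(\cstp(V))$ with $(T^{-1}/NT^0)\otimes\Qp$, which by~\eqref{Tsrik} is canonically $H_{\mathcal{D}}^{k+1}(X,\Q(r))\otimes\Qp$ (at least when $k+1<2r$; the edge case must be treated separately as in the remark following Conjecture~\ref{conjsreek}). So the conjecture asserts precisely that the composition in question agrees, under this identification, with the base change to $\Qp$ of the Sreekantan regulator $r_{\mathcal{D}}$.

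The next step is to recognize the projection $\hst^1(K,V)\to H^1(\cstp(V))$ from~\eqref{synshort} as the valuation part of the toric regulator at $p$. The short exact sequence~\eqref{synshort} splits (after base change) the $\DR(V)/F^0$ summand off from the monodromy quotient, and Theorem~\ref{logissyn} already exhibits the $\DR$-part as the logarithm of $\regt$ while Proposition~\ref{missingpiece} provides the compatibility with monodromy needed on the complementary side. Consequently, the composition $\hmot(X,k+1,r)_0\to\hst^1(K,V)\to H^1(\cstp(V))$ equals, via the above identification, the $p$-completion of $\val\circ\regt$. Diagram~\eqref{sreekcom} of Conjecture~\ref{conjsreek} then says exactly that this composition factors through $r_{\mathcal{D}}\otimes\Qp$, yielding the desired factorization.

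For general semi-stable reduction, where the $T_j^i$ need not describe all of $\Dst(V)$, the same strategy applies once two inputs are generalized: (i) the slope decomposition $\Dst(V)=\bigoplus_i D^i$ provided by Tsuji's semi-stable comparison theorem still makes the proof of~\eqref{synshort} go through, giving $H^1(\cstp(V))$ as a monodromy cokernel on the slope-zero piece; and (ii) the $E_2$-pieces of the weight spectral sequence converging to $H_{HK}^k$ are rationally the cohomology of Consani's complex, providing a canonical map $H_{\mathcal{D}}^{k+1}(X,\Q(r))\otimes\Qp\to H^1(\cstp(V))$. The formal argument from the totally degenerate case then carries over, modulo an analog of Conjecture~\ref{conjsreek} in the semi-stable setting.

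The main obstacle is precisely Conjecture~\ref{conjsreek} (or its semi-stable variant), which remains open and is, in essence, the hard part of the present statement as well. The geometric heart of the matter is to match the boundary map $\partial\colon CH^r(X,2r-k-1)\to CH^{r-1}(Y,2r-k-2)$ defining $r_{\mathcal{D}}$ via the localization sequence on Bloch higher Chow groups with the monodromy quotient of the Nekov{\'a}{\v{r}}-Nizio\l\ syntomic regulator. This requires a compatibility between the construction of the syntomic regulator at the level of motivic complexes on the generic fiber and the geometric boundary on cycles over the special fiber $Y$, passed through the weight filtration on Hyodo-Kato cohomology; establishing this compatibility is expected to be the bulk of the work, and the present conjecture should follow from, and in fact be essentially equivalent to, a proof of Conjecture~\ref{conjsreek} together with its semi-stable extension.
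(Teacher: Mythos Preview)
The statement you are attempting is a \emph{conjecture} in the paper; no proof is given there. The authors merely say it is ``suggested by the relation between the toric regulator with both the syntomic and the Sreekantan regulator,'' so there is nothing to compare your argument against. What you have written is not a proof but a heuristic reduction, and you yourself acknowledge this in your final paragraph.

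That said, your reduction in the totally degenerate case is essentially correct and worth recording: the identification $H^1(\cstp(V))\isom (T^{-1}/NT^0)\otimes\Qp$ from the lemma inside the proof of~\eqref{synshort}, together with~\eqref{Tsrik}, does exhibit the target as $H_{\mathcal{D}}^{k+1}(X,\Q(r))\otimes_{\Q}\Qp$; and functoriality of the projection $\cst\to\cstp$ together with Proposition~\ref{neklog} identifies the composite $\hmot\to\hst^1(K,V)\to H^1(\cstp(V))$ with $\val\circ\regtl$ at $l=p$. So in this case the conjecture is equivalent to the $l=p$ instance of Conjecture~\ref{conjsreek}, which is open. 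One small point: you invoke $\regt$ rather than $\regtl$; since $\regt$ is only defined assuming Conjecture~\ref{conjsreek}, you should phrase everything in terms of $\regtl$ at $l=p$, which exists unconditionally by Theorem~\ref{logissyn}.

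Your extension to general semi-stable reduction, however, contains a genuine gap. You claim that ``the slope decomposition $\Dst(V)=\bigoplus_i D^i$ \ldots\ still makes the proof of~\eqref{synshort} go through, giving $H^1(\cstp(V))$ as a monodromy cokernel on the slope-zero piece.'' The paper's computation uses more than a Newton slope decomposition: it uses that each $D^i$ is $T^i\otimes K_0$ with $\varphi$ acting as $p^i\sigma$ for a $\Q$-structure $T^i$, and that $N$ is defined over $\Q$ and injective on $T^0$. For arbitrary semi-stable reduction the graded pieces of the weight filtration are not Tate twists, so $D^0$ need not carry such a rational structure, $(\sigma-1)$ need not interact with $N$ as in the lemma, and the description of $H^1(\csto)$ as a monodromy cokernel breaks down. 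Likewise, your input (ii) is not established in the paper: outside the totally degenerate (or toric/cellular) case there is no proved identification of the Consani cohomology with a piece of Hyodo--Kato cohomology. So in the general case you have articulated a plausible strategy, but both ingredients you list are themselves open problems, not available facts.
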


\section{A conjectural formula for $K_1$ of surfaces}
\label{sec:k1}

One nice feature of the relation with the syntomic regulator developed
in the preceding section is that just as we are able to test formulas
for the toric regulator by taking their logarithms and comparing with the syntomic
regulator, we can look at formulas for the syntomic regulator and
attempt to exponentiate them to get conjectural formulas for the toric
regulator. In this section we present a conjecture for the toric
regulator for $K_1$ of surfaces, which is suggested by the
corresponding result of the first named author in the syntomic
case~\cite{Bes10}. Unfortunately, the formula we need is the analogue
of the one in~\cite{Bes10} for the semi-stable reduction case, and it
is still conjectural. As it will further take some work to introduce
the results we will not present the motivation here and describe the
conjecture without it.

\newcommand{\GG}{\mathcal{G}} \newcommand{\HH}{\mathbb{H}}
\newcommand{\TT}{\mathbb{T}} \newcommand{\Ta}[1]{u_{#1}}
\def\quot(#1,#2){#2\backslash #1}
We first recall some facts about
Mumford curves. Let $X$ be such a curve, given as $\GG\backslash \HH$
where $\HH$ is the Drinfeld upper half plane and $\GG$ is some
Schottky group. Let $\Gamma=(V,E) $ be the corresponding dual graph,
which is the quotient $\quot(\TT,\GG)$, with $\TT$ the tree of
$\HH$. Let $l$ be a prime. The filtration on
$M=\het^1(X\otimes \Kbar,\Zl(1)) $ takes the form of a short exact
sequence
\begin{equation}\label{mumshort}
  0\to H^1(\Gamma,\Z)\otimes \Zl(1) \to M \to \hh(\Gamma,\Z) \otimes \Zl \to 0\;.
\end{equation}
The augmented monodromy map
\begin{equation*}
  \nt: \hh(\Gamma,\Z) \to H^1(\Gamma,K^\times)
\end{equation*}
has the following description.
\begin{proposition}\label{Colemans}
  Let $\alpha\in \hh(\Gamma,\Z)$. Then there exist a unique
  $\omega_\alpha\in \Omega^1(X/K)$ such that for each $e\in E$ we have
  $\res_e(\omega_\alpha)= \alpha(e)$. Furthermore, on each $U_v$ there
  exists $f_\alpha^v \in \O(U_v)^\times$ such that
  $\dlog(f_\alpha^v) = \omega_\alpha|_{U_v}$.
\end{proposition}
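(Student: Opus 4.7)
For the existence and uniqueness of $\omega_\alpha$, I would establish the residue isomorphism
$$\res : \Omega^1(X/K) \xrightarrow{\sim} \hh(\Gamma, K)$$
for Mumford curves. Both sides are $K$-vector spaces of dimension equal to the genus $g = h^1(\Gamma)$ of $X$: the left-hand side by Riemann--Roch, the right by direct inspection of the dual graph. Injectivity of $\res$ I would check by lifting to the Drinfeld upper half plane: a form with vanishing residues along every annulus $e$ pulls back to a $\GG$-invariant rigid-analytic differential on $\HH$ that extends across the limit set, hence arises from a global holomorphic form on $\PP^1$, which must vanish. Surjectivity then follows by comparing dimensions, and the preimage of $\alpha \in \hh(\Gamma,\Z) \subset \hh(\Gamma,K)$ is the unique $\omega_\alpha \in \Omega^1(X/K)$ with prescribed residues.

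For the existence of $f_\alpha^v$, I would use the $p$-adic uniformization concretely. Fix a lift $\tilde{v} \in \TT$ of $v \in V$. Because $\GG$ is Schottky, it acts freely on $\TT$, so the covering $\pi : \HH \to X^{\mathrm{an}}$ restricts to an isomorphism of wide opens $\tilde{U}_{\tilde{v}} \xrightarrow{\sim} U_v$. It therefore suffices to produce $\tilde{f} \in \O(\tilde{U}_{\tilde{v}})^\times$ with $\dlog \tilde{f} = \pi^* \omega_\alpha$ on $\tilde{U}_{\tilde{v}}$, and then descend along this isomorphism. The cleanest route is to exhibit a global multiplicative theta series $\Theta_\alpha$ on $\HH$ satisfying $\dlog \Theta_\alpha = \pi^* \omega_\alpha$, obtained in the style of Manin--Drinfeld and Mumford as a $\GG$-quasi-invariant convergent infinite product over $\GG$ attached to a divisor on $\HH$ whose image on $X$ realizes the cycle class dual to $\alpha$; since $\Theta_\alpha$ is nonvanishing on $\tilde{U}_{\tilde{v}}$ (its zeros and poles lie in the tubes of vertices other than $\tilde{v}$), its restriction descends to the required $f_\alpha^v$.

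The principal obstacle is the construction and convergence of $\Theta_\alpha$ and the verification that $\dlog \Theta_\alpha$ really does recover $\omega_\alpha$. A more elementary alternative that I would develop on each affinoid subdomain exhausting $\tilde{U}_{\tilde{v}}$ is to decompose $\pi^* \omega_\alpha$, restricted there, as $\sum_i \alpha(e_i)\, \dlog(z - p_i) + dg$, where each $p_i$ is chosen inside the closed disk in $\PP^1$ corresponding to a boundary edge $e_i$ incident to $v$ and $g$ is a rigid-analytic primitive of the remaining exact part; then $\tilde{f} := \prod_i (z - p_i)^{\alpha(e_i)} \cdot \exp(g - c)$ for a constant $c \in K$ chosen to bring $g-c$ into the domain of convergence of the $p$-adic exponential yields a local candidate. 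The global consistency is only up to a multiplicative constant in $K^\times$, which is harmless in $\O(U_v)^\times$; this ambiguity is pinned down, if desired, by the uniqueness of $\omega_\alpha$ established in the first part.
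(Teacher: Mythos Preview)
Your route (a) via theta functions is essentially the paper's argument, but the paper runs it in the opposite order and thereby avoids your separate injectivity step. The paper first defines the theta functions $u_\gamma$ for $\gamma\in\GG$, observes that $\omega_\gamma:=\dlog u_\gamma$ descends to a holomorphic form on $X$, and computes $\res_e\omega_\gamma=\Hur(\gamma)(e)$ via the Hurewicz isomorphism $\Hur:\GG^{\ab}\xrightarrow{\sim}\hh(\Gamma,\Z)$. It then simply cites \cite[Chapter~VI (4.2)]{Ger-Put80} for the fact that the $\omega_\gamma$ span $\Omega^1(X/K)$. Existence, uniqueness, and the local $\dlog$ property then drop out simultaneously: given $\alpha\in\hh(\Gamma,\Z)$, choose $\gamma\in\GG$ with $\Hur(\gamma)=\alpha$, set $\omega_\alpha=\omega_\gamma$, and let $f_\alpha^v$ be the descent of $u_\gamma|_{\tilde U_{\tilde v}}$. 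No direct injectivity argument for the residue map is needed.

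That injectivity argument is where your proposal has a genuine gap. The assertion that a $\GG$-invariant holomorphic form on $\HH$ with vanishing residues along every edge of $\TT$ ``extends across the limit set'' to a global form on $\PP^1$ is not automatic: for genus $\ge 2$ the limit set is a perfect, nowhere-dense compact subset of $\PP^1(\Cp)$, and rigid-analytic continuation across such sets requires justification you have not supplied. Your alternative (b) also has a real problem: the $p$-adic exponential converges only on the disk of radius $|p|^{1/(p-1)}$, so on a large affinoid there is no single constant $c$ making $\exp(g-c)$ converge, and the resulting affinoid-local candidates need not patch to one element of $\O(U_v)^\times$. The theta-function construction sidesteps both issues by producing a global nonvanishing function on $\HH$ from the outset.
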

This appears in~\cite{Col00} without a clear indication of the proof
(Coleman shows the existence of $\omega_\alpha$ and mentions it is
locally a $\dlog$, presumably relying on the theory of theta functions
as we will do). We show how it follows from~\cite{Ger-Put80}
(or~\cite{ManDri73}). We start by defining Theta functions.
\begin{definition}[{\cite[II, (2.3) and (2.3.5)]{Ger-Put80}}]
  Let $\gamma\in \GG$. The theta function $\Ta{\gamma}$ is defined, up
  to a multiplicative constant, as follows: Pick any $x\in \HH$ and
  let $w_{\gamma,x}$ be any function on $\PP^1$ whose divisor is
  $\gamma(x)-x$. Then
  \begin{equation*}
    \Ta{\gamma}:= \prod_{\delta\in \GG} \delta^\ast w_{\gamma,x}\;.
  \end{equation*}
  This function is independent of the choice of $x$ as shown
  in~\cite[II, (2.3.4)]{Ger-Put80}
\end{definition}
\newcommand{\te}{\tilde{e}} One can normalize the function by
normalizing $w$ in an obvious way, but we will not do this.  The
function $\Ta{\gamma} $ has no zeros or poles. It has a constant
factor of automorphy, meaning that
$\mu(\delta,\gamma):= \delta^\ast \Ta{\gamma}/\Ta{\gamma}$ is
constant. As a consequence the one form
$\omega_\gamma:=\dlog(\Ta{\gamma}) $ is $\GG$ invariant and
holomorphic, descending to a holomorphic one form on $X$.

We compute the residues of $\omega_\gamma$. On $\HH$ it is clear that
for an edge $\te$ of $\TT$ the residue of $\dlog(w_{\gamma,x}) $ on
$\te$ is non-zero if and only if $\te$ sits on the path between the
vertex $v_x$, corresponding to the domain where $x$ resides, and
$\gamma(v_x)$, and it is $\pm 1$ depending on its orientation compared
with that of the path. Averaging on $\GG$, we immediately get that the
residue of $\omega_\gamma$ on an edge $e\in E(\Gamma)$ is
\begin{equation*}
  \sum_{\delta\in \GG} \res_{\delta \te} \dlog(w_{\gamma,x})\;, \text{ $\te$ any lift of $e$.}
\end{equation*}
Now we recall that as the graph $\Gamma$ is finite, the space
$\hh(\Gamma,\Z)$ of harmonic forms on $\Gamma$ can be identified with
the first homology of $\Gamma$ with coefficients in $\Z$, which is, by
definition,
\begin{equation*}
  H_1(\Gamma,\Z):= \ker (\Z[E] \xrightarrow{d} \Z[V])\;,\; d(e) = (e^+) - (e^-)\;. 
\end{equation*}
\newcommand{\Hur}{\operatorname{Hur}}
\newcommand{\ab}{\operatorname{ab}} Furthermore, the Hurewicz
isomorphism
\begin{equation}\label{hurewitz}
  \Hur:\GG^{\ab} = H_1(\GG,\Z) \to H_1(\Gamma,\Z)
\end{equation}
can be evaluated on $\gamma\in \GG$ by choosing a vertex $v$ and
pushing down the path from $v$ to $\gamma(v)$ from $\TT$ to
$\Gamma$. This immediately gives
\begin{proposition}\label{hur1}
  The harmonic cocycle $e\mapsto \res_e \Ta{\gamma} $ is
  $\Hur(\gamma)$.
\end{proposition}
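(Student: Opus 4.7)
The plan is to identify both sides as weighted edge-counts on a single geodesic path in $\TT$, pulled back to $\Gamma$.

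First, I would refine the residue fact that was mentioned just before the proposition. For a fixed choice of $x \in \HH$ and an oriented edge $\te$ of $\TT$, the one-form $\dlog(w_{\gamma,x})$ has residue on $\te$ equal to $+1$, $-1$, or $0$: since $w_{\gamma,x}$ has divisor $\gamma(x)-x$, the residue on the annulus associated with $\te$ counts (with the sign determined by the orientation of $\te$) whether $\te$ separates $x$ from $\gamma(x)$ in $\TT$. By the tree geometry this happens exactly when $\te$ sits on the geodesic path $P_\gamma$ from $v_x$ to $\gamma(v_x)$, with sign $+1$ if the orientation of $\te$ agrees with the direction of traversal of $P_\gamma$ and $-1$ otherwise.

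Second, I would use the averaging formula stated in the paragraph just before the proposition, namely that for any lift $\te$ of $e \in E(\Gamma)$,
\[
\res_e \omega_\gamma \;=\; \sum_{\delta \in \GG} \res_{\delta \te}\bigl(\dlog(w_{\gamma,x})\bigr).
\]
Because $\GG$ is a Schottky group acting freely on the edges of $\TT$, the set $\{\delta \te : \delta \in \GG\}$ is exactly the fiber of $\te$ under the covering $\TT \to \Gamma$, i.e.\ the complete set of oriented lifts of $e$. Combined with the first step, this expresses $\res_e \omega_\gamma$ as the \emph{signed} count of oriented lifts of $e$ that occur on $P_\gamma$, where each lift contributes $+1$ if its orientation agrees with the direction of traversal of $P_\gamma$ and $-1$ otherwise.

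Third, I would compare this with $\Hur(\gamma)$. By the description of the Hurewicz map recalled in~\eqref{hurewitz}, the class $\Hur(\gamma) \in H_1(\Gamma,\Z) \subseteq \Z[E]$ is obtained by taking the path $P_\gamma$ in $\TT$ from $v_x$ to $\gamma(v_x)$ and pushing it forward to $\Gamma$ via the quotient map. The coefficient of a given oriented edge $e$ in this pushforward is, by definition of the pushforward of a $1$-chain, precisely the signed count of lifts of $e$ appearing in $P_\gamma$ — exactly the expression obtained in the second step. Hence $\res_e \omega_\gamma = \Hur(\gamma)(e)$ for every $e$, which is the claim.

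The only subtlety is sign bookkeeping: one must check that the $\pm 1$ attached to $\res_{\te}(\dlog w_{\gamma,x})$, which records whether the orientation of $\te$ agrees with the direction in which $P_\gamma$ crosses it, matches the $\pm 1$ with which the corresponding edge appears in the pushforward of $P_\gamma$ to $\Gamma$ (where the reference orientation is that of $e$). This is automatic once we demand that a lift $\te$ of $e$ inherits its orientation from $e$, since then both signs record the same comparison between the traversal direction of $P_\gamma$ and the fixed orientation of $e$. Apart from this, the proof is a direct chain of rewritings.
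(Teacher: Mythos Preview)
Your proof is correct and follows exactly the approach the paper intends: the proposition is stated as an immediate consequence of the residue computation and the description of the Hurewicz map given in the two paragraphs preceding it, and you have simply spelled out the ``immediate'' step carefully, including the sign bookkeeping.
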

\begin{proof}[{Proof of Proposition~\ref{Colemans}}]
  By~\cite[Chapter VI (4.2) Proposition ]{Ger-Put80} the functions
  $\omega_\gamma$ span $\Omega^1(X)$. The result follows easily.
\end{proof}

Because for each $e\in E$ the function $f_\alpha^{e^+}$ and
$f_\alpha^{e^-}$ $\dlog$ to the same form on $e$, their quotient is a
constant $c_\alpha(e)\in K^\times$. This is a cocycle on $\Gamma$ with
values in $K^\times$ and its image in $H^1(\Gamma,K^\times)$ is
uniquely determined by $\alpha$. By pairing with harmonic cocycles we
get a bilinear form
\begin{equation*}
  \mu': \hh(\Gamma,\Z)\times \hh(\Gamma,\Z) \to K^\times
\end{equation*}
\begin{proposition}
  With the Hurewitz isomorphism~\eqref{hurewitz} we have $\mu'=\mu$.
\end{proposition}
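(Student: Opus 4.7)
The plan is as follows. Fix $\gamma \in \GG$ and set $\alpha := \Hur(\gamma)$; it suffices, for every $\gamma' \in \GG$, to identify $\mu'(\alpha, \Hur(\gamma'))$ with $\mu(\gamma', \gamma)$. Because $\TT$ is a tree with free $\GG$-action and quotient $\Gamma$, the covering $\TT \to \Gamma$ induces an isomorphism $H^1(\Gamma, K^\times) \isom H^1(\GG, K^\times) = \Hom(\GG^{\ab}, K^\times)$ that is dual to~\eqref{hurewitz}. Under this isomorphism the pairing $\mu'$ becomes evaluation, so the statement reduces to showing that the class of $c_\alpha$ in $H^1(\Gamma, K^\times)$ corresponds to the homomorphism $\delta \mapsto \mu(\delta, \gamma) = \delta^\ast \Ta{\gamma}/\Ta{\gamma}$.

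By Proposition~\ref{hur1}, $\omega_\alpha = \omega_\gamma = \dlog(\Ta{\gamma})$, so that on $\HH$ the form pulled back from $\omega_\gamma$ has the single global multiplicative primitive $\Ta{\gamma}$. Choose a distinguished lift $\tilde{v}_0 \in V(\TT)$ of each vertex $v$ of $\Gamma$, and take $f_\alpha^v \in \O(U_v)^\times$ to be the function on $U_v$ corresponding to $\Ta{\gamma}|_{\tilde{U}_{\tilde{v}_0}}$ under the rigid analytic identification $\tilde{U}_{\tilde{v}_0} \xrightarrow{\sim} U_v$. The family $\{\Ta{\gamma}|_{\tilde{U}_{\tilde{v}}}\}_{\tilde{v} \in V(\TT)}$ gives primitives on the cover whose edge cocycle is identically $1$ (they all agree on annuli as restrictions of the single function $\Ta{\gamma}$ on $\HH$), and the factor-of-automorphy relation, together with the multiplicativity of $\mu$ in its first argument (immediate from $(\delta_1\delta_2)^\ast \Ta{\gamma} = \delta_2^\ast \delta_1^\ast \Ta{\gamma}$), expresses this family as $\mu(\delta, \gamma)$ times the pullback of $f_\alpha^v$ over $\tilde{U}_{\delta \tilde{v}_0}$. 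Thus the pulled-back cocycle on $\TT$ is the coboundary of the $0$-cochain $h$ defined by $h(\delta \tilde{v}_0) = \mu(\delta, \gamma)^{\pm 1}$, and the standard recipe reading off the class in $H^1(\GG, K^\times)$ from the $\GG$-non-equivariance of such a trivializing $0$-cochain produces exactly $\delta \mapsto \mu(\delta, \gamma)$. Evaluating at $\gamma'$ then yields $\mu(\gamma', \gamma)$.

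The main obstacle is not conceptual but combinatorial: one must keep orientations and signs consistent through the chain of identifications $H^1(\Gamma, K^\times) \cong \Hom(H_1(\Gamma, \Z), K^\times) \cong \Hom(\GG^{\ab}, K^\times)$, making sure that the direction of~\eqref{hurewitz}, the convention for the differential $d$ on cochains, and the order of the arguments in $\mu(\delta, \gamma)$ all line up. I would verify the identification first on a single nonseparating loop of $\Gamma$ (the rank-one Schottky case), since this is precisely where a misalignment would swap $\mu(\gamma, \gamma')$ with $\mu(\gamma', \gamma)$ or with its inverse, and only then promote the calculation to the general statement.
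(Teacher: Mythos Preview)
Your argument is correct and follows essentially the same strategy as the paper: reduce via Proposition~\ref{hur1} to $\omega_\alpha=\dlog \Ta{\gamma}$, then identify the graph-cohomology class $c_\alpha\in H^1(\Gamma,K^\times)\cong H^1(\GG,K^\times)$ with the automorphy factor $\delta\mapsto \delta^\ast \Ta{\gamma}/\Ta{\gamma}$. The paper compresses your explicit covering-space computation into the single observation that both cocycles arise as the image of $\omega_\gamma$ under the connecting map of the short exact sequence $1\to K^\times \to \O \xrightarrow{\dlog}\Omega\to 0$, interpreted either as sheaves on $X$ (Čech over the $U_v$'s, giving $c_\alpha$) or as $\GG$-modules on global sections over $\HH$ (giving $\mu(\cdot,\gamma)$).
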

\begin{proof}
  Using Proposition~\ref{hur1} what we need to prove is that for the
  form $\omega=\omega_\gamma= \dlog(u_\lambda)=\dlog(u)$, the
  homomorphism $\delta\mapsto \delta^\ast u/u $ in $H^1(\GG,K^\times)$
  respresents the same cohomology class as the map
  $e\mapsto \dlog^{-1} \omega|_{U_{e^+}} - \dlog^{-1}
  \omega|_{U_{e^-}}$ in $H^1(\Gamma,K^\times)$. But both are clearly
  the image of $\omega$ under the boundary map in the short exact
  sequence
  \begin{equation*}
    1\to K^\times \to \O \xrightarrow{\dlog} \Omega \to 0
  \end{equation*}
  either as sheaves on $X$ or, taking global sections on $\HH$, as
  $\GG$-modules.
\end{proof}
Becausue $\mu$ is symmetric by~\cite[Theorem~1]{ManDri73} we get.
\begin{corollary}\label{mpsym}
  the form $\mu'$ is symmetric.
\end{corollary}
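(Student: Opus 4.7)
The plan is that this corollary should be essentially immediate from the preceding proposition, and the role of the proof is mostly bookkeeping. Having just identified $\mu'$ with the automorphy pairing $\mu$ (transported along the Hurewicz isomorphism $\GG^{\ab} \cong H_1(\Gamma,\Z) = \hh(\Gamma,\Z)$), one only has to quote the classical symmetry of $\mu$.

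Concretely, I would proceed as follows. First, given harmonic cocycles $\alpha,\beta \in \hh(\Gamma,\Z)$, use the Hurewicz isomorphism~\eqref{hurewitz} to pick $\gamma,\delta \in \GG^{\ab}$ with $\Hur(\gamma)=\alpha$ and $\Hur(\delta)=\beta$. By Proposition~\ref{hur1}, the theta function $\Ta{\gamma}$ provides the form $\omega_\alpha$ and local primitives $f_\alpha^v$ appearing in Proposition~\ref{Colemans}, and similarly for $\beta$. The preceding proposition then identifies $\mu'(\alpha,\beta)$ with $\mu(\gamma,\delta) = \delta^\ast \Ta{\gamma}/\Ta{\gamma}$, so the claim reduces to the symmetry relation $\mu(\gamma,\delta) = \mu(\delta,\gamma)$ in $K^\times$. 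This is exactly~\cite[Theorem~1]{ManDri73}.

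There is essentially no obstacle, since the identification $\mu = \mu'$ has already been done. The only subtlety worth verifying is that $\mu$ is a well-defined bilinear form on $\GG^{\ab}\times \GG^{\ab}$ — i.e., that it descends from $\GG\times \GG$ — but this is standard and follows from the cocycle relation satisfied by automorphy factors together with the fact that Manin--Drinfeld's theorem produces values in $K^\times$ (an abelian group). Thus the whole corollary can be stated in one or two lines: apply the preceding proposition and~\cite[Theorem~1]{ManDri73}.
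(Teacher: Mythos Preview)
Your proposal is correct and follows exactly the paper's own argument: the preceding proposition identifies $\mu'$ with $\mu$ via the Hurewicz isomorphism, and the symmetry of $\mu$ is then quoted from~\cite[Theorem~1]{ManDri73}. The paper states this in a single line before the corollary, so your write-up is, if anything, more detailed than necessary.
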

\begin{corollary}
  We have $\nt(\alpha) = c_\alpha$.
\end{corollary}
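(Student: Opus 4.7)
The plan is to view both $\nt(\alpha)$ and $c_\alpha$ as elements of $H^1(\Gamma,K^\times)$ and to distinguish them via the perfect pairing~\eqref{eq:harmonicpair}. After tensoring the duality $\hh(\Gamma,\Z)\times H^1(\Gamma,\Z)\to\Z$ with $K^\times$, we identify (up to torsion) $H^1(\Gamma,K^\times)$ with $\Hom_\Z(\hh(\Gamma,\Z),K^\times)$, so a class is determined by its pairing against every $\beta\in\hh(\Gamma,\Z)$. It thus suffices to verify that both classes pair with $\beta$ to give the theta-function value $\mu(\alpha,\beta)=\mu'(\alpha,\beta)$.

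For $c_\alpha$ this is tautological: the definition of $\mu'$ says exactly that the pairing of $c_\alpha$ with $\beta$ equals $\mu'(\alpha,\beta)$. The other identification requires tracing through the construction of the augmented monodromy. The short exact sequence~\eqref{mumshort} is the $l$-adic Tate module of the Manin-Drinfeld uniformization $J=\Hom(\hh(\Gamma,\Z),\mathbb{G}_m)/\Lambda$ of the Jacobian, with $\Lambda\isom\hh(\Gamma,\Z)$ embedded via $\alpha\mapsto\mu(\alpha,\cdot)$. Chasing the boundary map of this sequence through Kummer theory, and using the canonical isomorphism $H^1(K,\Zl(1))\isom K^{\times(l)}$, yields that $\ntl(\alpha)$ equals $\mu(\alpha,\cdot)$ as a functional with values in $K^{\times(l)}$; Lemma~\ref{keylemma} then patches these for all $l$ to give $\nt(\alpha)=\mu(\alpha,\cdot)$ with values in $K^\times$.

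Combined with $\mu=\mu'$ from the preceding proposition, this gives $\nt(\alpha)=c_\alpha$. The substantive step is the second one, identifying the augmented monodromy for Mumford curves with the Manin-Drinfeld period map. This is classical, but it requires verifying compatibility between the Raskind-Xarles weight filtration on $\het^1(X,\Zl(1))$ and the filtration arising from the exact sequence of Tate modules of $\Hom(\hh(\Gamma,\Z),\mathbb{G}_m)\to J\to\Lambda$.
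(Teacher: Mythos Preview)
Your proposal is correct and takes essentially the same approach as the paper: the paper's proof is the single sentence ``This is because $\mu$ gives the periods for the $p$-adic uniformization of the Jacobian of $X$ by~\cite{ManDri73},'' which is precisely your substantive step of identifying $\nt$ with the Manin--Drinfeld period map, after which the preceding proposition $\mu=\mu'$ finishes the argument. Your detour through the pairing~\eqref{eq:harmonicpair} is not strictly needed---one can identify $\nt(\alpha)$ and $c_\alpha$ directly as elements of $\Hom(\hh(\Gamma,\Z),K^\times)$---but it does no harm, and your closing remark about the compatibility of the Raskind--Xarles filtration with the Tate-module filtration of the uniformization is exactly the point the paper's one-line citation is tacitly invoking.
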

\begin{proof}
  This is because $\mu$ gives the periods for the $p$-adic
  uniformization of the Jacobian of $X$ by~\cite{ManDri73}.
\end{proof}

We will from now onward normalize the choice of $f_\alpha^v$ in such a
way that
\begin{equation}
  c_\alpha \in \hh(\Gamma,K^\times)\;.\label{eq:harnor}
\end{equation}

This may require multiplying by some integer.

Suppose now we have two Mumford curves $X_i$, $i=1,2$, with
corresponding graphs $\Gamma_i$, and we consider the surface
$X=X_1\times X_2$. We want a formula for the toric regulator
\begin{equation*}
  \regt: \hmot(X,3,2) \to \hT(X,3,2)
\end{equation*}
\newcommand{\kon}{\Theta} given on elements of the form
\begin{equation}\label{kelement}
  \kon = \sum (C_j, g_j)\;,
\end{equation}
where $C_j\subset X$ are curves and $g_j$ is a rational functions on
$C_j$ such that the divisors of the $g_j$ cancel on $X$.

Let us first compute $\hT(X,3,2) $. Putting aside the uninteresting
terms corresponding to $H^0\otimes H^2$, the main contribution to
$\het^2(X,\Zl(2)) $ is
\begin{equation*}
  M = M_1\otimes M_2\;,\; M_i = \het^1(X_i\otimes \Kbar,\Zl(1))\;.
\end{equation*}
Taking the tensor product of the short exact sequences of the
form~\eqref{mumshort} corresponding to $M_i$ we get on $M$ a $3$-step
filtration and we may consider the interesting quotient $M'$ having
the following short exact sequence,
\newcommand\aA[1]{\hh(\Gamma_{#1},\Z)}
\newcommand\BB[1]{H^1(\Gamma_{#1},\Z)}
\newcommand{\id}{\operatorname{id}} \newcommand{\Pa}{P}
\begin{equation*}
  0 \to \left(\aA{1} \otimes \BB{2} \oplus \aA{2}\otimes \BB{1}\right)\otimes \Zl(1) \to M' \to \aA{1} \otimes \aA{2} \to 0\;,
\end{equation*}
and we may apply a projection $\Pa$ on one of the summands on the
left to get an extension
\begin{equation*}
  0 \to \left(\aA{1} \otimes \BB{2}\right) \otimes \Zl(1) \to M'' \to \aA{1} \otimes \aA{2} \to 0
\end{equation*}
The associated augmented monodromy is
\begin{equation}\label{jacpiece}
  \id_{\aA{1}} \otimes \nt_2:  \aA{1} \otimes \aA{2} \to
  \aA{1} \otimes \BB{2}\otimes K^\times
\end{equation}
and we get a regulator
\begin{equation*}
  \Pa \regt:  \hmot(X,3,2) \to \Pa  \hT(X,3,2)\;.
\end{equation*}
We use the duality between graph cohomology and harmonic cocycles to
view the resulting intermediate Jacobian $\Pa \hT(X,3,2)$, which is
the cokernel of~\eqref{jacpiece}, as bilinear forms
$\BB{1}\times \aA{2}\to K^\times$ modulo those forms which are
obtained from bilinear forms $\BB{1} \times \BB{2} \to \Z$ by
composing in the second coordinate with
$\nt_2: \aA{2} \to \BB{2}\otimes K^\times$ (to be precise, we need to
compose with the dual of $\nt_2$ but this is the same by
Corollary~\ref{mpsym}).

To construct the required form out of the element~\eqref{kelement} we
are going to further assume that for each index $j$ the curve $C_j$
has semi-stable reduction, the projections $\pi_i:C_j \to X_i$ are
finite for $i=1,2$ and that they give maps of graphs between the
corresponding dual graphs. Let $\alpha\in \aA{2}$, $\beta\in
\BB{1}$. Identify $\beta$ with a harmonic representative (which may
require again multiplying by a fixed integers, and let
$(f_\alpha^v)_{v\in V_2}$ be the corresponding functions as in
Proposition~\ref{Colemans}, normalized as in~\eqref{eq:harnor}. We
pick an orientation for the edges of $\Gamma_2$. Consider a curve
$C_j$ with a rational function $g_j$ on it. The map induced by $\pi_2$
on graphs determines an orientation on the edges of
$\Gamma_{C_j}$. Furthermore, we get pullbacked function
$h^w = \pi_2^\ast f_\alpha^{\pi_2(w)}$ for each $w\in V_{C_j}
$. Define \newcommand{\regq}{\reg_?}
\begin{align*}
  \regq(\kon)(\beta,\alpha)_j &= \sum_{e\in E_{C_j}} t_e(g_j,h^{e^+})^{ \beta(\pi_1(e))}\;, \\
  \regq(\kon)(\beta,\alpha) &= \sum_j  \regq(\kon)(\beta,\alpha)_j\;.
\end{align*}
Note that the only place where the orientation of the graph enters is
when deciding on the function $h^{e^+} $.  Let us check that this
gives a well defined element of $ \Pa \hT(X,3,2) $, fixing a single
$C=C_j$ and $g=g_j$. Because of the harmonicity condition there is no
ambiguity in $\beta$. Since we also imposed harmonicity in the
construction of $f_\alpha^v$ these function, and consequently the
functions $h^w$, are defined up to a single multiplicative factor
$c$. Correspondingly, the terms $ t_e(g,h^{e^+}) $ will change by
$ t_e(g,c)= c^{-\deg_e g}$ by Lemma~\ref{paldeg}, with
$e\mapsto \deg_e (g) = \res_e \dlog g$. This last quantity belongs to
$d C^0(\Gamma_C,\Z)$ by~\cite[Lemma~2.1]{Bes-Zer13} and thus the
regulator does not change.

Now we check what happens if we change the orientation. Suppose we
change the orientation for one $e\in E_2$. That changes the
orientation for all edges in $e' \in \pi_2^{-1} e$ and for each of
these $h^{e'+}$ is replaced by $h^{e'+}/\nt_2(\alpha)(e)$. Thus, the
change is given by some quadratic form evaluated on $\beta$ and
$\nt_2(\alpha)$ as required.

\begin{conjecture}
  We have $P \regt = \regq$.
\end{conjecture}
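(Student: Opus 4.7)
The plan is to follow the strategy already used in Section~\ref{sec:syntomic} for $K_2$ of Mumford curves: reduce to a syntomic computation by passing to logarithms, then invoke an explicit formula for the syntomic regulator and match it term by term with the tame-symbol expression defining $\regq$. By Theorem~\ref{logissyn}, the logarithm of the toric regulator on $\hmot(X,3,2)_0$ factors through the syntomic regulator, so $\log(\Pa\regt(\kon))$ is the image of $\Pa\reg_{\syn}(\kon)$ in $\DR(M'')/F^0$ modulo the appropriate $T^0$-subspace. Since the toric regulator is in any case only defined up to the fixed integer ambiguity mentioned after Definition~\ref{toricl}, it suffices to show that $\log \regq(\kon)(\beta,\alpha)$ agrees with the $p$-adic regulator evaluated against $(\beta,\alpha)$ under the identification of the relevant piece of $\DR(V'')$ with a subspace of $\aA{1}\otimes \BB{2}\otimes K$.

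Next, I would invoke the (still conjectural) semi-stable analogue of the formula of~\cite{Bes10}, which in the good reduction case expresses the syntomic regulator on elements of the form $\sum (C_j,g_j)$ as a sum over $j$ of Coleman-type integrals along $C_j$ of $\log(g_j)$ against a pulled-back one-form. In the Mumford situation, the relevant one-form is $\omega_\alpha \in \Omega^1(X_2/K)$ produced by Proposition~\ref{Colemans}, pulled back along the second projection and restricted to $C_j$, while pairing against $\beta$ in the first factor amounts to weighting contributions from an edge $e\in E_{C_j}$ by $\beta(\pi_1(e))$. The pulled-back local primitives are precisely the functions $h^w = \pi_2^\ast f_\alpha^{\pi_2(w)}$ featuring in the definition of $\regq$.

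Exactly as in the proof of the $K_2$ theorem of Section~\ref{sec:syntomic}, I would then argue that all wide-open contributions of Besser--de Jong ``triple index'' type vanish, because the components of the reduction of $X_1$ and $X_2$ are projective lines and hence so are those of $C_j$ under the assumptions imposed on $(C_j,g_j)$, so that~\cite[Proposition~8.4]{Bes-deJ02} kills these terms. What survives is, for each $j$ and each edge $e \in E_{C_j}$, a boundary Coleman pairing of the shape $\pair{\log(g_j),\log(h^{e^+})}_e$. By the lemma closing Section~\ref{sec:syntomic} this equals $\log t_e(g_j,h^{e^+})$, and after reassembling with the weights $\beta(\pi_1(e))$ one recovers precisely $\log \regq(\kon)(\beta,\alpha)$.

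The hard part, and the reason this is stated only as a conjecture, is the missing semi-stable version of the syntomic regulator formula of~\cite{Bes10}; once it is available the above dictionary is essentially formal. Secondary technicalities I would still need to address carefully are: the compatibility of the wide-open decomposition of $C_j$ pulled back from $\XX_1$ and $\XX_2$ with the one arising from the chosen semi-stable model of $C_j$ (which the assumption that $\pi_i$ induces a map of dual graphs is designed to ensure); the verification that the harmonicity normalization~\eqref{eq:harnor} precisely matches the $K^\times$-ambiguity in the local primitives $f_\alpha^v$, so that the log computation is well-posed; and a careful bookkeeping of orientations on $E_{C_j}$, inherited from $E_2$ through $\pi_2$, to pin down the sign with which $\beta(\pi_1(e))$ appears, confirming that the result is independent of the orientation choice modulo the subgroup of $\Pa \hT(X,3,2)$ by which one quotients.
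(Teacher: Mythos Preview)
The statement you are attempting to prove is a \emph{conjecture}; the paper offers no proof. Indeed, the opening paragraph of Section~\ref{sec:k1} explicitly says that the formula is ``suggested by'' the syntomic computation of~\cite{Bes10}, that the needed semi-stable analogue of that formula ``is still conjectural'', and that the authors ``will not present the motivation here and describe the conjecture without it.'' So there is nothing in the paper to compare your argument against; what you have written is essentially a reconstruction of the motivation the authors allude to but deliberately omit.

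That said, even granting the missing semi-stable syntomic formula, your outline would not prove the conjecture as stated. Your strategy is to show that $\log(\Pa\regt(\kon))$ and $\log(\regq(\kon))$ agree via Theorem~\ref{logissyn}. But the logarithm $K^\times \to K$ has nontrivial kernel (the roots of unity in $K$), so agreement of logarithms does not force agreement in $\Pa\hT(X,3,2)$. More seriously, Theorem~\ref{logissyn} only concerns the completion at $l=p$, whereas $\regt$ is characterized by its completions at \emph{all} primes $l$ (Definition~\ref{toricl} and the theorem following it). Compare with the $K_2$ case: there the identification of $\reg_P$ with $\regt$ at primes $l\ne p$ is carried out directly in Section~\ref{sec:pal} by an \'etale/nearby-cycles argument (Proposition~\ref{jacmap} and what follows), and the syntomic computation in Section~\ref{sec:syntomic} is a separate consistency check at $p$, not the proof. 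A genuine proof of the present conjecture would likewise require an $l$-adic argument for $l\ne p$ in addition to the syntomic one you sketch; your proposal supplies neither that nor a mechanism to lift the log identity through the kernel of $\log$.
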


\end{document}